\shorttitle}
\@nx\MakeUppercase{\the\toks@}}
\patchcmd\@settitle{\uppercasenonmath\@title}{\Large}{}{}
\authors}
\newtheorem{theorem}{Theorem}[section]
\newtheorem{corollary}{Corollary}[section]
\newtheorem{lemma}{Lemma}[section]
\newtheorem{remark}{Remark}[section]
\numberwithin{equation}{section}
  \newtheorem{thqt}{Theorem}
  \newtheorem{lemqt}[thqt]{Lemma}
  \renewcommand{\thethqt}{\Alph{thqt}}
\begin{document}
\address{$^{[1_a]}$ University of Monastir, Faculty of Economic Sciences and Management of Mahdia, Mahdia, Tunisia}
\address{$^{[1_b]}$ Laboratory Physics-Mathematics and Applications (LR/13/ES-22), Faculty of Sciences of Sfax, University of Sfax, Sfax, Tunisia}
\email{\url{kais.feki@hotmail.com}\,;\,\url{kais.feki@fsegma.u-monastir.tn}}

\address{$^{[2]}$ P.G. Department of Mathematics, Utkal University,
Vanivihar, Bhubaneswar 751004, India.}
\email{\url{satyajitsahoo2010@gmail.com}}

\subjclass{Primary 47A12, 46C05; Secondary 47B65, 47A05.}   

\keywords{Positive operator, $A$-adjoint operator, numerical radius, operator matrix, inequality.}

\date{\today}
\author[Kais Feki and Satyajit Sahoo] {\Large{Kais Feki}$^{1_{a,b}}$ and \Large{Satyajit Sahoo}$^{2}$ }
\title[Further inequalities for the $\mathbb{A}$-numerical radius of certain $2 \times 2$ operator matrices]{Further inequalities for the $\mathbb{A}$-numerical radius of certain $2 \times 2$ operator matrices}
\maketitle
\begin{abstract}
Let $\mathbb{A}=
\begin{pmatrix}
A & O \\
O & A \\
\end{pmatrix}
$ be a $2\times2$ diagonal operator matrix whose each diagonal entry is a bounded positive (semidefinite) linear operator $A$ acting on a complex Hilbert space $\mathcal{H}$. In this paper, we derive several $\mathbb{A}$-numerical radius inequalities for $2\times 2$ operator matrices whose entries are bounded with respect to the seminorm induced by the positive operator $A$ on $\mathcal{H}$. Some applications of our inequalities are also given.
\end{abstract}

\section{Introduction and Preliminaries}\label{s1}
Throughout this article, $(\mathcal{H},\langle\cdot,\cdot\rangle)$ stands for a complex Hilbert space with associated norm $\|\cdot\|$. If $\mathcal{M}$ is a given linear subspace of $\mathcal{H}$, then $\overline{\mathcal{M}}$ denotes its closure in the norm topology of $\mathcal{H}$. Further, the orthogonal projection onto a closed subspace $\mathcal{S}$ of $\mathcal{H}$ will be denoted by $P_{\mathcal{S}}$. Let $\mathcal{B}(\mathcal{H})$ denote the $C^{\ast}$-algebra of all bounded linear operators acting on $\mathcal{H}$ with the identity operator $I_{\mathcal{H}}$ (or simply $I$ if no confusion arises). If $T\in\mathcal{B}(\mathcal{H})$, then $\mathcal{N}(T), \mathcal{R}(T)$  and $T^*$ are denoted by the kernel, the range and the adjoint of $T$, respectively.  An operator $T\in \mathcal{B}(\mathcal{H})$ is called positive (semi-definite) $\langle Ax, x\rangle\geq 0$, for every $x\in \mathcal{H}$.
For the rest of this paper, by an operator we mean a bounded linear operator. Further, we suppose that $A\in\mathcal{B}(\mathcal{H})$ is a nonzero positive operator which induces the following semi-inner product
$$\langle\cdot,\cdot\rangle_{A}:\mathcal{H}\times \mathcal{H}\longrightarrow\mathbb{C},\;(x,y)\longmapsto \langle x, y\rangle_{A}:=\langle Ax, y\rangle=\langle A^{1/2}x, A^{1/2}y\rangle.$$
Here $A^{1/2}$ denotes the square root of $A$. Notice that the seminorm induced by ${\langle \cdot, \cdot\rangle}_A$ is given by ${\|x\|}_A=\|A^{1/2}x\|$, for all $x\in\mathcal{H}$. It can checked that ${\|\cdot\|}_A$ is a norm on $\mathcal{H}$ if and only if $A$ is injective, and that the seminormed space $(\mathcal{H}, {\|\cdot\|}_A)$ is complete if and only if $\mathcal{R}(A)$ is a closed subspace of $\mathcal{H}$.

Let $T \in \mathcal{B}(\mathcal{H})$. An operator $S\in\mathcal{B}(\mathcal{H})$ is called an $A$-adjoint of $T$ if $\langle Tx, y\rangle_A=\langle x, Sy\rangle_A$ for all $x,y\in \mathcal{H}$ (see \cite{acg1}). Thus, the existence of an $A$-adjoint of $T$ is equivalent to the existence of a solution of the equation $AX = T^*A$. Notice that this kind of equations can be investigated by using a theorem due to Douglas \cite{doug} which establishes the equivalence between the following  statements:
\begin{itemize}
  \item [(i)] The operator equation $TX=S$ has a bounded linear solution $X$.
  \item [(ii)] $\mathcal{R}(S) \subseteq \mathcal{R}(T)$.
  \item [(iii)] There exists a positive number $\lambda$ such that $\|S^*x\|\leq \lambda \|T^*x\|$ for all $x\in \mathcal{H}$.
\end{itemize}
\noindent Moreover, among many solutions of $AX=S$, it has only one, say $Q$, which satisfies $\mathcal{R}(Q) \subseteq \overline{\mathcal{R}(T^{*})}$. Such $Q$ is said the reduced solution of the equation $TX=S$. If we denote by $\mathcal{B}_{A}(\mathcal{H})$, the subspace of all operators admitting $A$-adjoints, then by Douglas theorem, we  have
$$\mathcal{B}_{A}(\mathcal{H})=\left\{T\in \mathcal{B}(\mathcal{H})\,;\;\mathcal{R}(T^{*}A)\subset \mathcal{R}(A)\right\}.$$
If $T\in \mathcal{B}_A(\mathcal{H})$, the reduced solution of the equation
$AX=T^*A$ will be denoted by $T^{\sharp_A}$. We mention here that, $T^{\sharp_A}=A^\dag T^*A$ in which $A^\dag$ is the Moore-Penrose inverse of $A$ (see \cite{acg2}). In addition, if $T \in \mathcal{B}_A({\mathcal{H}})$, then $T^{\sharp_A} \in \mathcal{B}_A({\mathcal{H}})$, $(T^{\sharp_A})^{\sharp_A}=P_{\overline{\mathcal{R}(A)}}TP_{\overline{\mathcal{R}(A)}}$ and $((T^{\sharp_A})^{\sharp_A})^{\sharp_A}=T^{\sharp_A}$. Moreover, If $S\in \mathcal{B}_A(\mathcal{H})$, then $TS \in\mathcal{B}_A({\mathcal{H}})$ and $(TS)^{\sharp_A}=S^{\sharp_A}T^{\sharp_A}.$ Furthermore, for every $T \in \mathcal{B}_A({\mathcal{H}})$, we have
\begin{equation}\label{diez}
\|T^{\sharp_A}T\|_A = \| TT^{\sharp_A}\|_A=\|T\|_A^2 =\|T^{\sharp_A}\|_A^2.
\end{equation}
 An operator $U\in  \mathcal{B}_A(\mathcal{H})$ is called $A$-unitary if $\|Ux\|_A=\|U^{\sharp_A}x\|_A=\|x\|_A$, for all $x\in \mathcal{H}$. It is worth mentioning that, an operator $U\in  \mathcal{B}_A(\mathcal{H})$ is $A$-unitary if and only if $U^{\sharp_A} U=(U^{\sharp_A})^{\sharp_A} U^{\sharp_A}=P_{\overline{\mathcal{R}(A)}}$ (see \cite{acg1}). For an account of the results, we invite the reader to consult \cite{acg1,acg2}.

An operator $T$ is called $A$-bounded if there exists $\lambda>0$ such that $ \|Tx\|_{A} \leq \lambda \|x\|_{A},\;\forall\,x\in \mathcal{H}.$  By applying Douglas theorem, one can easily see that the subspace of all operators admitting $A^{1/2}$-adjoints, denoted by $\mathcal{B}_{A^{1/2}}(\mathcal{H})$, is equal the collection of all $A$-bounded operators, i.e.,
$$\mathcal{B}_{A^{1/2}}(\mathcal{H})=\left\{T \in \mathcal{B}(\mathcal{H})\,;\;\exists \,\lambda > 0\,;\;\|Tx\|_{A} \leq \lambda \|x\|_{A},\;\forall\,x\in \mathcal{H}  \right\}.$$
Notice that $\mathcal{B}_{A}(\mathcal{H})$ and $\mathcal{B}_{A^{1/2}}(\mathcal{H})$ are two subalgebras of $\mathcal{B}(\mathcal{H})$ which are, in general, neither closed nor dense in $\mathcal{B}(\mathcal{H})$. Moreover, we have $\mathcal{B}_{A}(\mathcal{H})\subset \mathcal{B}_{A^{1/2}}(\mathcal{H})$ (see \cite{acg1,acg3}). Clearly, $\langle\cdot,\cdot\rangle_{A}$ induces a seminorm on $\mathcal{B}_{A^{1/2}}(\mathcal{H})$. Indeed, if $T\in\mathcal{B}_{A^{1/2}}(\mathcal{H})$, then it holds that
\begin{equation}\label{semii}
\|T\|_A:=\sup_{\substack{x\in \overline{\mathcal{R}(A)},\\ x\not=0}}\frac{\|Tx\|_A}{\|x\|_A}=\sup\big\{{\|Tx\|}_A\,; \,\,x\in \mathcal{H},\, {\|x\|}_A =1\big\}<\infty.
\end{equation}

Saddi \cite{saddi} in 2012 defined the $A$-numerical radius of an operator $T\in\mathcal{B}(\mathcal{H})$ by
\begin{align*}
\omega_A(T)
&:= \sup \left\{|\langle Tx, x\rangle_A|\,;\;x\in\mathcal{H},\|x\|_A = 1\right\}.
\end{align*}

Faghih-Ahmadi and Gorjizadeh \cite{fg} in 2016 showed that for $T\in\mathcal{B}_{A^{1/2}}(\mathcal{H})$, we have
\begin{equation}\label{newsemi}
\|T\|_A=\sup\left\{|\langle Tx, y\rangle_A|\,;\;x,y\in \mathcal{H},\,\|x\|_{A}=\|y\|_{A}= 1\right\}.
\end{equation}

We notice here that it may happen that ${\|T\|}_A$ and $\omega_A(T)$ are equal to $+ \infty$ for some $T\in\mathcal{B}(\mathcal{H})$ (see \cite{feki01}). However, ${\|\cdot\|}_A$ and $\omega_A(\cdot)$ are equivalent seminorms on $\mathcal{B}_{A^{1/2}}(\mathcal{H})$. More precisely, In 2018, Baklouti et al. \cite{bakfeki01} showed that  for every $T\in \mathcal{B}_{A^{1/2}}(\mathcal{H})$, we have
\begin{equation}\label{refine1}
\tfrac{1}{2} \|T\|_A\leq\omega_A(T) \leq \|T\|_A.
\end{equation}

For the sequel, for any arbitrary operator $T\in {\mathcal B}_A({\mathcal H})$, we write
$$\Re_A(T):=\frac{T+T^{\sharp_A}}{2}\;\;\text{ and }\;\;\Im_A(T):=\frac{T-T^{\sharp_A}}{2i}.$$
Recently, in 2019 Zamani \cite[Theorem 2.5]{zamani1} showed that if $T\in\mathcal{B}_{A}(\mathcal{H})$, then
\begin{align}\label{zm}
\omega_A(T) = \displaystyle{\sup_{\theta \in \mathbb{R}}}{\left\|\Re_A(e^{i\theta}T)\right\|}_A=\displaystyle{\sup_{\theta \in \mathbb{R}}}{\left\|\Im_A(e^{i\theta}T)\right\|}_A.
\end{align}
Notice that \eqref{zm} is also proved in a general context in \cite{bm}. In 2020, the concept of the $A$-spectral radius of $A$-bounded operators was introduced by the first author in \cite{feki01} as follows:
\begin{equation}\label{newrad}
r_A(T):=\displaystyle\inf_{n\geq 1}\|T^n\|_A^{\frac{1}{n}}=\displaystyle\lim_{n\to\infty}\|T^n\|_A^{\frac{1}{n}}.
\end{equation}
Here we want to mention that the proof of the second equality in \eqref{newrad} can also be found in \cite[Theorem 1]{feki01}. Like the classical spectral radius of Hilbert space operators, it was shown in \cite{feki01} that $r_A(\cdot)$ satisfies the commutativity property, i.e.
\begin{equation}\label{commut}
r_A(TS)=r_A(ST),
\end{equation}
for all $T,S\in \mathcal{B}_{A^{1/2}}(\mathcal{H})$. For the sequel, if $A=I$, then $\|T\|$, $r(T)$ and $\omega(T)$ denote respectively the classical operator norm, the spectral radius and the numerical radius of an operator $T$.

An operator $T\in\mathcal{B}(\mathcal{H})$ is called $A$-selfadjoint if $AT$ is selfadjoint, i.e., $AT = T^*A$ and it is called $A$-positive if $AT\geq0$. If $T$ is $A$-positive, we will write $T\geq_{A}0$. In recent years, several results covering some classes of operators on a complex Hilbert space $\big(\mathcal{H}, \langle \cdot, \cdot\rangle\big)$ were extended to $\big(\mathcal{H}, {\langle \cdot, \cdot\rangle}_A\big)$. Of course, the extension is not trivial since many difficulties arise. For instance, as mentioned above, it may happen
that ${\|T\|}_A = \infty$ for some $T\in \mathcal{B}(\mathcal{H})$. Moreover, no operator admits an adjoint operator for the
semi-inner product ${\langle \cdot, \cdot\rangle}_A$. In addition, for $T \in \mathcal{B}_A({\mathcal{H}})$, we have $(T^{\sharp_A})^{\sharp_A}=P_{\overline{\mathcal{R}(A)}}TP_{\overline{\mathcal{R}(A)}}\neq T$. For further details about $A$-numerical radius, interested readers can follow \cite{bakna,bakfeki01,bakfeki04,BPl,ConFe,bfeki,feki03,fekilaa,zamani1,NSD} and the references therein.

In this paper, we consider the ${2\times 2}$ operator diagonal matrix $\mathbb{A}=\begin{pmatrix}
A &O\\
O &A
\end{pmatrix}$. Clearly, $\mathbb{A}\in \mathcal{B}(\mathcal{H}\oplus \mathcal{H})^+$. So, $\mathbb{A}$ induces the following semi-inner product
$$\langle x, y\rangle_{\mathbb{A}}= \langle \mathbb{A}x, y\rangle=\langle x_1, y_1\rangle_A+\langle x_2, y_2\rangle_A,$$
for all $x=(x_1,x_2)\in \mathcal{H}\oplus \mathcal{H}$ and $y=(y_1,y_2)\in \mathcal{H}\oplus \mathcal{H}$.

Recently, several inequalities for the $\mathbb{A}$-numerical radius of $2 \times 2$ operator matrices have been established by Bhunia et al.  when $A$ is a positive injective operators (see \cite{BPN}). Moreover, different upper and lower bounds of $\mathbb{A}$-numerical radius when $A$ is a positive semidefinite operator has been recently investigated by the first author in \cite{feki04}, by Rout et al. in \cite{rout} and by Kittaneh et al. in \cite{KITSAT}. In this article, we will continue working in this direction and we will prove several new $\mathbb{A}$-numerical radius inequalities of certain $2 \times 2$ operator matrices. The inspiration for our investigation comes from \cite{dolat,HirKit,HirKit2}.

 \section{Results}\label{s2}
In this section, we present our results. Throughout this section $\mathbb{A}$ is denoted to be the $2\times 2$ operator diagonal matrix whose each diagonal entry is the positive operator $A$. To prove our first result, the following lemmas are required.

\begin{lemma}(\cite{feki01})\label{ll2020}
Let $T\in\mathcal{B}(\mathcal{H})$ is an $A$-self-adjoint operator. Then,
\begin{equation*}
\|T\|_{A}=\omega_A(T)=r_A(T).
\end{equation*}
\end{lemma}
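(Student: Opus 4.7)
Since the $A$-selfadjointness condition $AT = T^{*}A$ gives $\mathcal{R}(T^{*}A) \subseteq \mathcal{R}(A)$, Douglas' theorem places $T$ in $\mathcal{B}_{A}(\mathcal{H}) \subseteq \mathcal{B}_{A^{1/2}}(\mathcal{H})$, so ${\|T\|}_{A}$ is finite. The bound \eqref{refine1} already provides $\omega_{A}(T) \le {\|T\|}_{A}$, and the infimum defining $r_{A}(T)$ in \eqref{newrad} is bounded above by its $n=1$ value, namely ${\|T\|}_{A}$. Hence the task reduces to establishing the two reverse inequalities ${\|T\|}_{A} \le \omega_{A}(T)$ and ${\|T\|}_{A} \le r_{A}(T)$.

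For the first of these I would transport the classical polarization proof of ``selfadjoint implies $\omega = \|\cdot\|$'' into the $A$-setting. The decisive input is that $A$-selfadjointness makes $\langle Tx, x\rangle_{A} = \langle ATx, x\rangle$ real and, more generally, gives $\langle Ty, x\rangle_{A} = \overline{\langle Tx, y\rangle_{A}}$. Expanding $\langle T(x \pm y), x \pm y\rangle_{A}$ and subtracting expresses $\mathrm{Re}\langle Tx, y\rangle_{A}$ as a difference of diagonal values of the $A$-form; combined with the parallelogram identity for ${\|\cdot\|}_{A}$ and the estimate $|\langle Tz, z\rangle_{A}| \le \omega_{A}(T){\|z\|}_{A}^{2}$, this yields $|\mathrm{Re}\langle Tx, y\rangle_{A}| \le \omega_{A}(T)$ whenever ${\|x\|}_{A} = {\|y\|}_{A} = 1$. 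Choosing $\theta$ so that $e^{i\theta}\langle Tx, y\rangle_{A}$ is real and replacing $x$ by $e^{i\theta}x$ upgrades the real part to the modulus, after which \eqref{newsemi} delivers ${\|T\|}_{A} \le \omega_{A}(T)$.

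For the second reverse inequality I would exploit the explicit description of $T^{\sharp_{A}}$. Starting from $T^{\sharp_{A}} = A^{\dag}T^{*}A$, substituting $T^{*}A = AT$, and using the identity $A^{\dag}A = P_{\overline{\mathcal{R}(A)}}$, we obtain $T^{\sharp_{A}} = P_{\overline{\mathcal{R}(A)}}T$. Consequently $T - T^{\sharp_{A}}$, and hence $T^{2} - T^{\sharp_{A}}T = (T - T^{\sharp_{A}})T$, have range inside $\mathcal{N}(A) = \mathcal{N}(A^{1/2})$, so both carry vanishing $A$-seminorm. Invoking \eqref{diez} then gives
\[
\|T^{2}\|_{A} = \|T^{\sharp_{A}}T\|_{A} = {\|T\|}_{A}^{2}.
\]
Since $T^{2}$ is again $A$-selfadjoint (because $AT^{2} = T^{*}AT = T^{*}T^{*}A = (T^{2})^{*}A$), iterating this identity produces $\|T^{2^{n}}\|_{A} = {\|T\|}_{A}^{2^{n}}$ for every $n \ge 1$, and passing to the limit in \eqref{newrad} along the subsequence $n \mapsto 2^{n}$ forces $r_{A}(T) \ge {\|T\|}_{A}$.

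The main subtlety I foresee is the gap between $T$ and $T^{\sharp_{A}}$: unlike in the classical case, $A$-selfadjointness does \emph{not} give $T = T^{\sharp_{A}}$, so the identity $\|T^{2}\|_{A} = \|T^{\sharp_{A}}T\|_{A}$ must be carefully justified through the observation that $T - T^{\sharp_{A}} = (I - P_{\overline{\mathcal{R}(A)}})T$ is annihilated by $A^{1/2}$. Once this point is handled, the remaining computations are direct $A$-analogues of the standard Hilbert-space arguments for selfadjoint operators.
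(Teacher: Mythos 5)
Your proof is correct, and it is worth noting that the paper itself offers no argument for this lemma: it is simply quoted from \cite{feki01}, where (as in most of the semi-Hilbertian literature) such equalities are obtained essentially by transporting $T$ to an honest Hilbert space (the space $\mathcal{R}(A^{1/2})$ with its canonical norm, or the quotient-completion of $(\mathcal{H},\|\cdot\|_A)$) and invoking the classical selfadjoint case. Your route stays entirely inside $(\mathcal{H},\langle\cdot,\cdot\rangle_A)$ and all the delicate points are handled: the polarization/parallelogram step is legitimate because $AT=T^*A$ makes the form $(x,y)\mapsto\langle Tx,y\rangle_A$ Hermitian, and with \eqref{newsemi} it gives $\|T\|_A\le\omega_A(T)$; and the identity $T^{\sharp_A}=A^{\dagger}T^*A=A^{\dagger}AT=P_{\overline{\mathcal{R}(A)}}T$ correctly bridges the gap $T\neq T^{\sharp_A}$, since $T-T^{\sharp_A}=(I-P_{\overline{\mathcal{R}(A)}})T$ has range in $\mathcal{N}(A)=\mathcal{N}(A^{1/2})$, so $\|T^2\|_A=\|T^{\sharp_A}T\|_A=\|T\|_A^2$ by \eqref{diez}; as $A$-selfadjointness is preserved under squaring, the dyadic subsequence in \eqref{newrad} forces $r_A(T)\ge\|T\|_A$, while \eqref{refine1} and the $n=1$ term give the trivial directions. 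Two small points you could make explicit: the estimate $|\langle Tz,z\rangle_A|\le\omega_A(T)\|z\|_A^2$ also holds when $\|z\|_A=0$ (Cauchy--Schwarz for the semi-inner product), and $T^{\sharp_A}=P_{\overline{\mathcal{R}(A)}}T$ can be justified without the possibly unbounded $A^{\dagger}$ by checking directly that $P_{\overline{\mathcal{R}(A)}}T$ solves $AX=T^*A$ with range in $\overline{\mathcal{R}(A)}$, hence is the reduced solution. What your approach buys is a self-contained proof avoiding the auxiliary-space machinery; what the citation buys is brevity and immediate access to the classical theory.
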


\begin{lemma}\label{lm5}(\cite{feki02})
Let $\mathbb{T}=\begin{pmatrix}
T_{11}&T_{12} \\
T_{21}&T_{22}
\end{pmatrix}$ be such that $T_{ij}\in \mathcal{B}_{A^{1/2}}(\mathcal{H})$ for all $i,j\in\{1,2\}$. Then, $\mathbb{T}\in \mathcal{B}_{\mathbb{A}^{1/2}}(\mathcal{H}\oplus \mathcal{H})$ and
$$ r_\mathbb{A}\left(\mathbb{T}\right)\leq r\left[\begin{pmatrix}
\|T_{11}\|_A & \|T_{12}\|_A \\
\|T_{21}\|_A & \|T_{22}\|_A
\end{pmatrix}\right].$$
\end{lemma}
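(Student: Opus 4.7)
The plan is to reduce the claim to the spectral-radius formula \eqref{newrad} by proving the stronger operator-seminorm inequality
\[
\|\mathbb{T}^n\|_{\mathbb{A}}\le \|\mathbb{M}^n\|
\]
for every integer $n\ge 1$, where
\[
\mathbb{M}:=\begin{pmatrix}\|T_{11}\|_A & \|T_{12}\|_A\\ \|T_{21}\|_A & \|T_{22}\|_A\end{pmatrix}
\]
is regarded as an operator on $\mathbb{C}^2$ endowed with its Euclidean norm, and $\|\cdot\|$ denotes the associated operator norm. Taking $n$-th roots and letting $n\to\infty$ then delivers $r_{\mathbb{A}}(\mathbb{T})\le r(\mathbb{M})$ via \eqref{newrad} and Gelfand's formula $\lim_n\|\mathbb{M}^n\|^{1/n}=r(\mathbb{M})$, while the base case $n=1$ simultaneously establishes the membership $\mathbb{T}\in \mathcal{B}_{\mathbb{A}^{1/2}}(\mathcal{H}\oplus \mathcal{H})$.

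For the base case, I fix $x=(x_1,x_2)\in \mathcal{H}\oplus \mathcal{H}$ and write
\[
\|\mathbb{T}x\|_{\mathbb{A}}^2 = \|T_{11}x_1+T_{12}x_2\|_A^2 + \|T_{21}x_1+T_{22}x_2\|_A^2.
\]
The triangle inequality for $\|\cdot\|_A$ combined with the elementary bound $\|T_{ij}y\|_A\le \|T_{ij}\|_A\|y\|_A$ for every $y\in \mathcal{H}$ (a homogeneity consequence of \eqref{semii} and the $A^{1/2}$-boundedness of $T_{ij}$) shows that the vector $\mathbf{w}:=(\|(\mathbb{T}x)_1\|_A,\|(\mathbb{T}x)_2\|_A)^{\top}$ is componentwise dominated by $\mathbb{M}\mathbf{v}$, where $\mathbf{v}:=(\|x_1\|_A,\|x_2\|_A)^{\top}$. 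Since $\|\mathbf{v}\|=\|x\|_{\mathbb{A}}$ and both vectors have nonnegative coordinates, this yields
\[
\|\mathbb{T}x\|_{\mathbb{A}}=\|\mathbf{w}\|\le \|\mathbb{M}\mathbf{v}\|\le \|\mathbb{M}\|\,\|x\|_{\mathbb{A}},
\]
which handles both the inclusion and the $n=1$ case.

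For the inductive step, I apply the base-case bound with $x$ replaced by $\mathbb{T}x$: the vector of $A$-seminorms of the two components of $\mathbb{T}^2x$ is componentwise bounded by $\mathbb{M}\mathbf{w}$. Since $\mathbb{M}$ has nonnegative entries, the coordinatewise inequality $\mathbf{w}\le \mathbb{M}\mathbf{v}$ propagates through $\mathbb{M}$ to give $\mathbb{M}\mathbf{w}\le \mathbb{M}^2\mathbf{v}$, and therefore $\|\mathbb{T}^2x\|_{\mathbb{A}}\le \|\mathbb{M}^2\mathbf{v}\|$. An obvious induction on $n$ then produces $\|\mathbb{T}^n x\|_{\mathbb{A}}\le \|\mathbb{M}^n\mathbf{v}\|\le \|\mathbb{M}^n\|\,\|x\|_{\mathbb{A}}$ for all $n\ge 1$. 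I expect the only nontrivial point to be this monotonicity step: it uses in an essential way that the entries of $\mathbb{M}$ are nonnegative real numbers, so that left multiplication by $\mathbb{M}$ preserves the coordinatewise order on $\mathbb{R}^2$. Everything else reduces to direct computation and the standard properties of $\|\cdot\|_A$ and $r_{\mathbb{A}}(\cdot)$ already recalled in the preliminaries.
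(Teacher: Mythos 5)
Your argument is correct, and it is self-contained: the componentwise domination $\mathbf{w}\le \mathbb{M}\mathbf{v}$, the monotonicity of left multiplication by the entrywise nonnegative matrix $\mathbb{M}$, the resulting bound $\|\mathbb{T}^n\|_{\mathbb{A}}\le\|\mathbb{M}^n\|$, and the passage to the limit via \eqref{newrad} and Gelfand's formula are all sound, and the $n=1$ case indeed gives $\mathbb{T}\in\mathcal{B}_{\mathbb{A}^{1/2}}(\mathcal{H}\oplus\mathcal{H})$. Note, however, that this paper does not prove the lemma at all: it is imported from \cite{feki02}, so there is no in-paper proof to match. Compared with the argument in that source, which first establishes the norm bound $\|\mathbb{T}\|_{\mathbb{A}}\le\bigl\|\bigl[\|T_{ij}\|_A\bigr]\bigr\|$ (Lemma \ref{lmm05} here) and then controls the powers $\mathbb{T}^n$ through entrywise estimates $\|(\mathbb{T}^n)_{ij}\|_A\le(\mathbb{M}^n)_{ij}$ before applying the spectral radius formula, your route replaces the entrywise operator-norm bookkeeping by a direct estimate on the vector of component seminorms of $\mathbb{T}^n x$; the two are equivalent in substance (both hinge on nonnegativity of $\mathbb{M}$ and the Gelfand-type formula \eqref{newrad}), but yours avoids invoking the operator-matrix norm lemma and expanding the entries of $\mathbb{T}^n$, which makes it slightly more elementary.
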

\begin{lemma}(\cite{bfeki,feki04})\label{lemma1}
Let $P, Q, R, S\in\mathcal{B}_{A^{1/2}}(\mathcal{H})$. Then, the following assertions hold
\begin{itemize}
\item[(i)] $\omega_{\mathbb{A}}\left[\begin{pmatrix}
P & O \\
O & S
\end{pmatrix}\right] = \max\big\{\omega_{A}(P), \omega_{A}(S)\big\}$.
\item[(ii)] ${\left\|\begin{pmatrix}
P & O \\
O & S
\end{pmatrix}\right\|}_{\mathbb{A}} = {\left\|\begin{pmatrix}
O & P \\
S & O
\end{pmatrix}\right\|}_{\mathbb{A}} = \max\big\{{\|P\|}_{A}, {\|S\|}_{A}\big\}$.
\item[(iii)] If $P, Q, R, S\in\mathcal{B}_{A}(\mathcal{H})$, then ${\begin{pmatrix}
P & Q \\
R & S
\end{pmatrix}}^{\sharp_{\mathbb{A}}} = \begin{pmatrix}
P^{\sharp_A} & R^{\sharp_A} \\
Q^{\sharp_A} & S^{\sharp_A}
\end{pmatrix}$.
\end{itemize}
\end{lemma}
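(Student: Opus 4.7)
The plan is to establish the three parts separately, each via direct calculation from the appropriate definition, while keeping a careful eye on the fact that $\|\cdot\|_A$ is only a seminorm (so vectors of $A$-norm zero must be handled with a small argument).

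For part (i), I would start from the definition
$\omega_{\mathbb{A}}(\mathbb{T}) = \sup\bigl\{|\langle \mathbb{T} x,x\rangle_{\mathbb{A}}| : x\in\mathcal{H}\oplus\mathcal{H},\ \|x\|_{\mathbb{A}}=1\bigr\}$
and write $x=(x_1,x_2)$, which satisfies $\|x_1\|_A^2+\|x_2\|_A^2=1$. Expanding the semi-inner product gives
$\langle \mathbb{T} x,x\rangle_{\mathbb{A}} = \langle Px_1,x_1\rangle_A + \langle Sx_2,x_2\rangle_A.$
The upper bound $\omega_{\mathbb{A}}(\mathbb{T}) \le \max\{\omega_A(P),\omega_A(S)\}$ then follows by the triangle inequality together with the homogeneity bound $|\langle Pu,u\rangle_A| \le \omega_A(P)\|u\|_A^2$ (and similarly for $S$), normalizing on the closed subspace where $\|x_j\|_A\neq 0$ and discarding terms where $\|x_j\|_A=0$. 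The reverse inequality is obtained by taking test vectors of the form $(x_1,0)$ and $(0,x_2)$ with $\|x_1\|_A=1$ and $\|x_2\|_A=1$ respectively, producing the two one-sided suprema $\omega_A(P)$ and $\omega_A(S)$.

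For part (ii), the diagonal identity is analogous but uses the two-variable characterization \eqref{newsemi}: write $x=(x_1,x_2)$, $y=(y_1,y_2)$ with $\|x\|_{\mathbb{A}}=\|y\|_{\mathbb{A}}=1$ and compute
$\langle \mathbb{T} x,y\rangle_{\mathbb{A}} = \langle Px_1,y_1\rangle_A + \langle Sx_2,y_2\rangle_A.$
Estimating each summand by $\|P\|_A\|x_1\|_A\|y_1\|_A$ and $\|S\|_A\|x_2\|_A\|y_2\|_A$ and then using the Cauchy–Schwarz inequality in $\mathbb{R}^2$ on the vectors $(\|x_1\|_A,\|x_2\|_A)$ and $(\|y_1\|_A,\|y_2\|_A)$ yields the upper bound $\max\{\|P\|_A,\|S\|_A\}$. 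For the reverse inequality, again choose $x$ and $y$ supported in one coordinate to recover $\|P\|_A$ and $\|S\|_A$ separately. The off-diagonal case $\bigl\|\bigl(\begin{smallmatrix}O&P\\S&O\end{smallmatrix}\bigr)\bigr\|_{\mathbb{A}}$ is handled in the same way after noting that $\langle \mathbb{T} x,y\rangle_{\mathbb{A}} = \langle Px_2,y_1\rangle_A + \langle Sx_1,y_2\rangle_A$, and the same two-dimensional Cauchy–Schwarz argument applies.

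For part (iii), I would verify the defining identity of the $\mathbb{A}$-adjoint directly. Setting $\mathbb{T}=\bigl(\begin{smallmatrix}P&Q\\R&S\end{smallmatrix}\bigr)$ and $\mathbb{S}=\bigl(\begin{smallmatrix}P^{\sharp_A}&R^{\sharp_A}\\Q^{\sharp_A}&S^{\sharp_A}\end{smallmatrix}\bigr)$, expand $\langle \mathbb{T} x,y\rangle_{\mathbb{A}}$ into the four scalar summands $\langle Px_1,y_1\rangle_A,\ \langle Qx_2,y_1\rangle_A,\ \langle Rx_1,y_2\rangle_A,\ \langle Sx_2,y_2\rangle_A$, and use the defining property of each $\sharp_A$-adjoint on the single-entry level (e.g.\ $\langle Px_1,y_1\rangle_A=\langle x_1,P^{\sharp_A}y_1\rangle_A$) to regroup the terms as $\langle x,\mathbb{S}y\rangle_{\mathbb{A}}$. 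Since $\mathbb{S}$ has entries in $\mathcal{B}_A(\mathcal{H})\subset \mathcal{B}_{A^{1/2}}(\mathcal{H})$, Lemma~\ref{lm5} gives $\mathbb{S}\in \mathcal{B}_{\mathbb{A}}(\mathcal{H}\oplus\mathcal{H})$, so this identity identifies $\mathbb{S}$ with $\mathbb{T}^{\sharp_{\mathbb{A}}}$.

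The only delicate point I expect is the edge case in parts (i) and (ii) where one of $\|x_j\|_A$ or $\|y_j\|_A$ vanishes; the proof handles this simply by convention ($0\cdot\infty$ contributions are excluded), but it is worth a short remark rather than skipping it. Everything else is a routine unpacking of the definitions, so no conceptual obstacle should arise.
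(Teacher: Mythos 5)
First, note that the paper does not prove this lemma at all: it is quoted from \cite{bfeki,feki04}, so there is no in-paper argument to compare against; your proposal has to stand on its own. Parts (i) and (ii) do stand: the expansion of $\langle \mathbb{T}x,x\rangle_{\mathbb{A}}$ (resp. $\langle \mathbb{T}x,y\rangle_{\mathbb{A}}$), the bound $|\langle Pu,u\rangle_A|\le\omega_A(P)\|u\|_A^2$ (which also covers $\|u\|_A=0$, since then $A^{1/2}u=0$), the two-dimensional Cauchy--Schwarz step, and the one-coordinate test vectors are all fine. The only small care point in (ii) is that invoking \eqref{newsemi} for the operator matrix presupposes $\mathbb{T}\in\mathcal{B}_{\mathbb{A}^{1/2}}(\mathcal{H}\oplus\mathcal{H})$; either cite Lemma \ref{lm5}/\ref{lmm05} for this membership, or avoid the issue altogether by estimating $\|\mathbb{T}x\|_{\mathbb{A}}^2=\|Px_1\|_A^2+\|Sx_2\|_A^2$ directly from \eqref{semii}, which gives the diagonal (and, after swapping coordinates, the off-diagonal) upper bound even more quickly.

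Part (iii) has a genuine gap. Your computation shows that $\mathbb{S}=\begin{pmatrix} P^{\sharp_A} & R^{\sharp_A}\\ Q^{\sharp_A} & S^{\sharp_A}\end{pmatrix}$ satisfies $\langle \mathbb{T}x,y\rangle_{\mathbb{A}}=\langle x,\mathbb{S}y\rangle_{\mathbb{A}}$, i.e.\ $\mathbb{A}\mathbb{S}=\mathbb{T}^*\mathbb{A}$, so $\mathbb{S}$ is \emph{an} $\mathbb{A}$-adjoint of $\mathbb{T}$. But when $A$ (hence $\mathbb{A}$) is not injective this equation has many bounded solutions, differing by operators $N$ with $\mathbb{A}N=O$, and $\mathbb{T}^{\sharp_{\mathbb{A}}}$ is by definition the \emph{reduced} solution, singled out by the range condition $\mathcal{R}(\mathbb{T}^{\sharp_{\mathbb{A}}})\subseteq\overline{\mathcal{R}(\mathbb{A})}$. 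So the sesquilinear identity alone does not identify $\mathbb{S}$ with $\mathbb{T}^{\sharp_{\mathbb{A}}}$; moreover, Lemma \ref{lm5} only yields $\mathbb{S}\in\mathcal{B}_{\mathbb{A}^{1/2}}(\mathcal{H}\oplus\mathcal{H})$, not $\mathbb{S}\in\mathcal{B}_{\mathbb{A}}(\mathcal{H}\oplus\mathcal{H})$ (the latter actually follows from the identity $\mathbb{A}\mathbb{S}=\mathbb{T}^*\mathbb{A}$ itself, but it is irrelevant to uniqueness). The missing step is short: each block $X^{\sharp_A}$ is a reduced solution, so $\mathcal{R}(X^{\sharp_A})\subseteq\overline{\mathcal{R}(A)}$, hence $\mathcal{R}(\mathbb{S})\subseteq\overline{\mathcal{R}(A)}\oplus\overline{\mathcal{R}(A)}=\overline{\mathcal{R}(\mathbb{A})}$, which forces $\mathbb{S}=\mathbb{T}^{\sharp_{\mathbb{A}}}$. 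Alternatively, compute directly $\mathbb{T}^{\sharp_{\mathbb{A}}}=\mathbb{A}^{\dagger}\mathbb{T}^*\mathbb{A}$ with $\mathbb{A}^{\dagger}=\begin{pmatrix} A^{\dagger} & O\\ O & A^{\dagger}\end{pmatrix}$, whose block entries are $A^{\dagger}X^*A=X^{\sharp_A}$; with either repair the argument is complete.
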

 Now, we are ready to prove our first result which generalizes \cite[Theorem 2.7]{dolat}.
\begin{theorem}\label{thm101}
Let  $P, Q, R, S\in \mathcal{B}_{A}(\mathcal{H})$. Then, for $\lambda\in [0, 1]$, we have
\begin{align*}
&\omega_{\mathbb{A}}\left[\begin{pmatrix}
P&Q \\
R&S
\end{pmatrix}\right]\\
&\leq \frac{1}{2}\left(\|P\|_A+2\omega_A(S)+\sqrt{\|\lambda^2 PP^{\sharp_A}+QQ^{\sharp_A}\|_A}+\sqrt{\|(1-\lambda)^2 PP^{\sharp_A}+R^{\sharp_A}R\|_A}\right).
\end{align*}
\end{theorem}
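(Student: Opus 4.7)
The plan is to decompose the block matrix into three natural summands, control each via subadditivity of $\omega_\mathbb{A}$, and invoke a product-type inequality for the two partial-triangular pieces. Write
$$\begin{pmatrix} P & Q \\ R & S \end{pmatrix} = \underbrace{\begin{pmatrix}\lambda P & Q \\ 0 & 0\end{pmatrix}}_{M_1} + \underbrace{\begin{pmatrix}(1-\lambda)P & 0 \\ R & 0\end{pmatrix}}_{M_2} + \underbrace{\begin{pmatrix}0 & 0 \\ 0 & S\end{pmatrix}}_{M_3},$$
so that $\omega_\mathbb{A}(M) \le \omega_\mathbb{A}(M_1)+\omega_\mathbb{A}(M_2)+\omega_\mathbb{A}(M_3)$. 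Lemma~\ref{lemma1}(i) yields $\omega_\mathbb{A}(M_3)=\omega_A(S)$ at once.

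For $M_1$ and $M_2$, the central observation is that each admits a product factorization $M_i=U_iV_i$ of operators between the appropriate semi-inner-product spaces, in which one factor is a trivial embedding of operator-norm $1$ while the other carries the nontrivial entries. Concretely, $M_1 = \bigl(\begin{smallmatrix}I\\0\end{smallmatrix}\bigr)\bigl(\lambda P\ \ Q\bigr)$ with $V_1U_1=\lambda P$ and $\|V_1\|_{\mathbb{A}\to A}^2=\|\lambda^2 PP^{\sharp_A}+QQ^{\sharp_A}\|_A$ (computed via $V_1V_1^{\sharp}$ and equation \eqref{diez}), while $M_2 = \bigl(\begin{smallmatrix}(1-\lambda)P\\R\end{smallmatrix}\bigr)\bigl(I\ \ 0\bigr)$ with $V_2U_2=(1-\lambda)P$ and $\|U_2\|_{A\to\mathbb{A}}^2=\|(1-\lambda)^2 P^{\sharp_A}P+R^{\sharp_A}R\|_A$, where the latter is identified with $\|(1-\lambda)^2 PP^{\sharp_A}+R^{\sharp_A}R\|_A$ via \eqref{diez} together with the fact that $\|\cdot\|_A$ only sees the action on $\overline{\mathcal{R}(A)}$. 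In both cases $\|V_iU_i\|_A$ recovers $\lambda\|P\|_A$ or $(1-\lambda)\|P\|_A$, furnishing the $\|P\|_A$-terms of the bound.

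Applying the $\mathbb{A}$-semi-inner-product analogue of the product numerical-radius inequality $\omega_\mathbb{A}(UV)\le\tfrac12\bigl(\|U\|\,\|V\|+\|VU\|\bigr)$ to each of $M_1,M_2$ then produces
$\omega_\mathbb{A}(M_1)\le\tfrac12\bigl(\lambda\|P\|_A+\sqrt{\|\lambda^2 PP^{\sharp_A}+QQ^{\sharp_A}\|_A}\bigr)$ and
$\omega_\mathbb{A}(M_2)\le\tfrac12\bigl((1-\lambda)\|P\|_A+\sqrt{\|(1-\lambda)^2 PP^{\sharp_A}+R^{\sharp_A}R\|_A}\bigr)$; summing with $\omega_A(S)$ and collecting a common factor of $\tfrac12$ gives the claimed estimate. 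The principal obstacle is establishing the product inequality in the $A$-semi-inner-product setting: its proof tests against $\mathbb{A}$-unit vectors $\xi$, rewrites $\langle UV\xi,\xi\rangle_\mathbb{A}=\langle V\xi,U^{\sharp}\xi\rangle$ in the appropriate semi-inner product, and combines a Cauchy--Schwarz estimate with an arithmetic-mean refinement to extract the $\|VU\|$-contribution separately from the $\|U\|\,\|V\|$ one. A secondary but routine bookkeeping issue is the $P^{\sharp_A}P$ versus $PP^{\sharp_A}$ identification inside the second square root, handled as noted.
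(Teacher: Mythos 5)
Your three-term splitting is in substance the same as the paper's: the three $\mathbb{A}$-selfadjoint blocks in the published proof are precisely $2\Re_\mathbb{A}(e^{i\theta}M_1)$, $2\Re_\mathbb{A}(e^{i\theta}M_2)$ and $2\Re_\mathbb{A}(e^{i\theta}M_3)$ for your $M_1,M_2,M_3$, and the treatment of $M_3$ via Lemma \ref{lemma1}(i) is fine. The two remaining steps, however, each contain a genuine gap.

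First, the product inequality $\omega_\mathbb{A}(UV)\le\tfrac12\bigl(\|U\|\,\|V\|+\|VU\|_A\bigr)$ is the entire analytic content of your bound for $M_1$, and you do not prove it. It cannot be obtained the way you sketch: testing $\langle UV\xi,\xi\rangle_{\mathbb{A}}=\langle V\xi,U^{\sharp}\xi\rangle_{A}$ against $\mathbb{A}$-unit vectors and applying Cauchy--Schwarz yields only the term $\|U\|\,\|V\|$; the term $\|VU\|_A$ is a spectral quantity (it enters precisely because $r_\mathbb{A}(UV)=r_\mathbb{A}(VU)$) and is invisible to any pointwise vector estimate. The argument that does work writes $2\Re_\mathbb{A}(e^{i\theta}UV)$ as a product of two factors, uses $\|\cdot\|_\mathbb{A}=r_\mathbb{A}(\cdot)$ on $\mathbb{A}$-selfadjoint operators (Lemma \ref{ll2020}), commutes the factors inside $r_\mathbb{A}$ via \eqref{commut}, and finishes with Lemma \ref{lm5} --- which is exactly the paper's proof. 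So this step is not a citable black box; filling it in amounts to reproducing the published argument.

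Second, and more seriously, the $M_2$ estimate does not give the stated bound. With $M_2=U_2V_2$, $U_2=\left(\begin{smallmatrix}(1-\lambda)P\\ R\end{smallmatrix}\right)$, $V_2=(I\ \ O)$, you get $\|U_2\|^2=\|U_2^{\sharp_\mathbb{A}}U_2\|_A=\|(1-\lambda)^2P^{\sharp_A}P+R^{\sharp_A}R\|_A$, and your identification of this with $\|(1-\lambda)^2PP^{\sharp_A}+R^{\sharp_A}R\|_A$ is false. Equation \eqref{diez} equates the $A$-norms of $P^{\sharp_A}P$ and $PP^{\sharp_A}$ \emph{separately}, but the norm of a sum is not a function of the norms of its summands: already for $A=I$, $\lambda=0$, $P=E_{12}$ and $R=E_{11}$ (matrix units in $M_2(\mathbb{C})$) one has $\|P^{*}P+R^{*}R\|=1$ while $\|PP^{*}+R^{*}R\|=2$. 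Moreover, no column-times-row factorization of $M_2$ alone can produce the mixed quantity $PP^{\sharp_A}+R^{\sharp_A}R$: such a factorization always yields either all terms of the form $XX^{\sharp_A}$ or all of the form $X^{\sharp_A}X$. In the paper the mix arises because one factors the $\mathbb{A}$-selfadjoint real part, whose matrix contains both $R$ and $R^{\sharp_A}$ as entries, so that $P$ and $R^{\sharp_A}$ can be placed in the left factor while $P^{\sharp_A}$ and $R$ go in the right one before invoking \eqref{commut}. As written, your argument proves a different inequality (with $P^{\sharp_A}P$ under the second radical), not the theorem as stated.
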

\begin{proof}
Let $\mathbb{T}=\begin{pmatrix}
P&Q \\
R&S
\end{pmatrix}$. It is not difficult to see that $\Re_\mathbb{A}(e^{i\theta}\mathbb{T})$ is an $\mathbb{A}$-selfadjoint operator. So, by Lemma \ref{ll2020} we have
$${\left\|\Re_\mathbb{A}(e^{i\theta}\mathbb{T})\right\|}_\mathbb{A}=\omega_\mathbb{A}\Big(\Re_\mathbb{A}(e^{i\theta}\mathbb{T})\Big).$$
So, by applying Lemma \ref{lemma1} (i), we see that
\begin{align*}
&{\left\|\Re_\mathbb{A}(e^{i\theta}\mathbb{T})\right\|}_\mathbb{A}\\
&=\frac{1}{2}\omega_{\mathbb{A}}\left[\begin{pmatrix}
e^{i\theta}P+e^{-i\theta}P^{\sharp_A}&e^{i\theta}Q+e^{-i\theta}R^{\sharp_A} \\
e^{i\theta}R+e^{-i\theta}Q^{\sharp_A}&e^{i\theta}S+e^{-i\theta}S^{\sharp_A}
\end{pmatrix}\right]\\
&\leq\frac{1}{2}\omega_{\mathbb{A}}\left[\begin{pmatrix}
\lambda (e^{i\theta}P+e^{-i\theta}P^{\sharp_A})&e^{i\theta}Q \\
e^{-i\theta}Q^{\sharp_A}&0
\end{pmatrix}\right]\\
&+\frac{1}{2}\omega_{\mathbb{A}}\left[\begin{pmatrix}
(1-\lambda) (e^{i\theta}P+e^{-i\theta}P^{\sharp_A})&e^{-i\theta}R^{\sharp_A} \\
e^{i\theta}R&0
\end{pmatrix}\right]+\frac{1}{2}\omega_{\mathbb{A}}\left[\begin{pmatrix}
0&0 \\
0&e^{i\theta}S+e^{-i\theta}S^{\sharp_A}
\end{pmatrix}\right]\\
&\leq\frac{1}{2}\omega_{\mathbb{A}}\left[\begin{pmatrix}
\lambda (e^{i\theta}P+e^{-i\theta}P^{\sharp_A})&e^{i\theta}Q \\
e^{-i\theta}Q^{\sharp_A}&0
\end{pmatrix}\right]\\
&\;\;+\frac{1}{2}\omega_{\mathbb{A}}\left[\begin{pmatrix}
(1-\lambda) (e^{i\theta}P+e^{-i\theta}P^{\sharp_A})&e^{-i\theta}R^{\sharp_A} \\
e^{i\theta}R&0
\end{pmatrix}\right]+\omega_A(S),
\end{align*}
where the last inequality follows by Lemma \ref{lemma1} (i) together with the triangle inequality. Moreover, it can be observed that
\begin{align*}
\mathbb{A}\begin{pmatrix}
\lambda (e^{i\theta}P+e^{-i\theta}P^{\sharp_A})&e^{i\theta}Q \\
e^{-i\theta}Q^{\sharp_A}&0
\end{pmatrix}&=\begin{pmatrix}
\lambda (e^{i\theta}AP+e^{-i\theta}AP^{\sharp_A})&e^{i\theta}AQ \\
e^{-i\theta}AQ^{\sharp_A}&0
\end{pmatrix}\\
&=\begin{pmatrix}
\lambda \left(e^{i\theta}(P^{\sharp_A})^*A+e^{-i\theta}P^*A\right)&e^{i\theta}(Q^{\sharp_A})^*A \\
e^{-i\theta}Q^*A&0
\end{pmatrix}\\
&=\begin{pmatrix}
\lambda (e^{-i\theta}P^{\sharp_A}+e^{i\theta}P)&e^{i\theta}Q \\
e^{-i\theta}Q^{\sharp_A}&0
\end{pmatrix}^*\mathbb{A}.
\end{align*}
Hence $\begin{pmatrix}
\lambda (e^{i\theta}P+e^{-i\theta}P^{\sharp_A})&e^{i\theta}Q \\
e^{-i\theta}Q^{\sharp_A}&0
\end{pmatrix}$ is $\mathbb{A}$-selfadjoint operator. Similarly one can show that $\begin{pmatrix}
(1-\lambda) (e^{i\theta}P+e^{-i\theta}P^{\sharp_A})&e^{-i\theta}R^{\sharp_A} \\
e^{i\theta}R&0
\end{pmatrix}$ is $\mathbb{A}$-selfadjoint operator. So by applying Lemma \ref{ll2020} we see that
\begin{align*}
{\left\|\Re_\mathbb{A}(e^{i\theta}\mathbb{T})\right\|}_\mathbb{A}
&\leq\frac{1}{2}r_{\mathbb{A}}\left[\begin{pmatrix}
\lambda (e^{i\theta}P+e^{-i\theta}P^{\sharp_A})&e^{i\theta}Q \\
e^{-i\theta}Q^{\sharp_A}&O
\end{pmatrix}\right]\\
&\;\;+\frac{1}{2}r_{\mathbb{A}}\left[\begin{pmatrix}
(1-\lambda) (e^{i\theta}P+e^{-i\theta}P^{\sharp_A})&e^{-i\theta}R^{\sharp_A} \\
e^{i\theta}R&O
\end{pmatrix}\right]+\omega_A(S).
\end{align*}
So, by using \eqref{commut} we infer that
\begin{align*}
{\left\|\Re_\mathbb{A}(e^{i\theta}\mathbb{T})\right\|}_\mathbb{A}
&\leq \frac{1}{2}r_{\mathbb{A}}\left[\begin{pmatrix}
e^{-i\theta}I&O \\
\lambda P&Q
\end{pmatrix}\begin{pmatrix}
\lambda P^{\sharp_A}&e^{i\theta}I \\
Q^{\sharp_A}&O
\end{pmatrix}\right]\\
&\;\;+\frac{1}{2}r_{\mathbb{A}}\left[\begin{pmatrix}
e^{-i\theta}I&O \\
(1-\lambda) P&e^{-2i\theta}R^{\sharp_A}
\end{pmatrix}\begin{pmatrix}
(1-\lambda) P^{\sharp_A}&e^{i\theta}I \\
e^{2i\theta}R&O
\end{pmatrix}\right]+\omega_A(S)
\\
&\leq \frac{1}{2}r_{\mathbb{A}}\left[\begin{pmatrix}
\lambda e^{-i\theta}P^{\sharp_A} & I \\
\lambda^2 PP^{\sharp_A}+QQ^{\sharp_A}& \lambda e^{i\theta}P
\end{pmatrix}\right]\\
&\;\;+\frac{1}{2}r_{\mathbb{A}}\left[\begin{pmatrix}
(1-\lambda) e^{-i\theta}P^{\sharp_A} & I \\
(1-\lambda)^2 PP^{\sharp_A}+R^{\sharp_A}R&(1-\lambda) e^{i\theta}P
\end{pmatrix}\right]+\omega_A(S).
\end{align*}
So, by using Lemma \ref{lm5} we get
\begin{align*}
&{\left\|\Re_\mathbb{A}(e^{i\theta}\mathbb{T})\right\|}_\mathbb{A}\\
&\leq \frac{1}{2}r\left[\begin{pmatrix}
\lambda \|P\|_A & 1 \\
\|\lambda^2 PP^{\sharp_A}+QQ^{\sharp_A}\|_A& \lambda \|P\|_A
\end{pmatrix}\right]\\
&\;\;+\frac{1}{2}r\left[\begin{pmatrix}
(1-\lambda) \|P\|_A & 1 \\
\|(1-\lambda)^2 PP^{\sharp_A}+R^{\sharp_A}R\|_A&(1-\lambda) \|P\|_A
\end{pmatrix}\right]+\omega_A(S)
\\
&=\frac{1}{2}\bigg[\|P\|_A+2\omega_A(S)+\sqrt{\|\lambda^2 PP^{\sharp_A}+QQ^{\sharp_A}\|_A}+\sqrt{\|(1-\lambda)^2 PP^{\sharp_A}+R^{\sharp_A}R\|_A}\bigg].
\end{align*}
So, by taking the supremum over all $\theta\in\mathbb{R}$ in the last inequality and then using \ref{zm} we get desired result.
\end{proof}

The following Lemma is useful in the sequel. Notice that its assertions generalize recent results done by Rout et al. in \cite{rout} for operators in $\mathcal{B}_{A}(\mathcal{H})$.
\begin{lemma}\label{lem100}
	Let $T,S\in \mathcal{B}_{A^{1/2}}(\mathcal{H})$. Then,
\begin{itemize}
  \item [(i)] $ \omega_\mathbb{A}\left[\begin{pmatrix}
	T&S\\
	S&T
	\end{pmatrix}\right]= \max\{\omega_A(T+S),\omega_A(T-S)\}.$
  \item [(ii)] $ \omega_\mathbb{A}\left[\begin{pmatrix}
	T&-S\\
	S&T
	\end{pmatrix}\right]= \max\{\omega_A(T+iS),\omega_A(T-iS)\}.$
\end{itemize}
\end{lemma}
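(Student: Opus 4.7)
My plan is a direct variational argument, carried out in parallel for (i) and (ii), that avoids any appeal to an $\mathbb{A}$-unitary block similarity. This keeps the reasoning valid under the hypothesis $T,S\in\mathcal{B}_{A^{1/2}}(\mathcal{H})$ (a similarity approach would force us through $\sharp_\mathbb{A}$ manipulations and hence the stronger class $\mathcal{B}_A(\mathcal{H})$). In each part I exhibit specific $\mathbb{A}$-unit vectors realizing the two $A$-numerical radii on the right-hand side, and then show that an arbitrary $\mathbb{A}$-unit vector admits a linear change of variables which decouples $\langle\mathbb{T}\xi,\xi\rangle_\mathbb{A}$ into the sum of the corresponding two $A$-numerical forms.

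For (i), set $\mathbb{T}=\begin{pmatrix}T&S\\S&T\end{pmatrix}$. Against $\xi_\pm=\tfrac{1}{\sqrt 2}(x,\pm x)$ with $\|x\|_A=1$, one has $\|\xi_\pm\|_\mathbb{A}=1$ and a direct expansion gives $\langle\mathbb{T}\xi_\pm,\xi_\pm\rangle_\mathbb{A}=\langle(T\pm S)x,x\rangle_A$, yielding the $\geq$ inequality. For the reverse, let $\xi=(x,y)$ satisfy $\|x\|_A^2+\|y\|_A^2=1$ and substitute $u=(x+y)/\sqrt 2$, $v=(x-y)/\sqrt 2$. The parallelogram identity for $\|\cdot\|_A$ preserves the normalization, $\|u\|_A^2+\|v\|_A^2=1$, and expanding
\begin{equation*}
\langle\mathbb{T}\xi,\xi\rangle_\mathbb{A}=\langle Tx,x\rangle_A+\langle Ty,y\rangle_A+\langle Sy,x\rangle_A+\langle Sx,y\rangle_A
\end{equation*}
collapses (after the cross-terms in $S$ cancel to $\langle Su,u\rangle_A-\langle Sv,v\rangle_A$) to the clean identity
\begin{equation*}
\langle\mathbb{T}\xi,\xi\rangle_\mathbb{A}=\langle(T+S)u,u\rangle_A+\langle(T-S)v,v\rangle_A.
\end{equation*}
The triangle inequality and the definition of $\omega_A$ then bound the modulus by $\omega_A(T+S)\|u\|_A^2+\omega_A(T-S)\|v\|_A^2\leq\max\{\omega_A(T+S),\omega_A(T-S)\}$, giving the matching $\leq$ inequality.

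Part (ii) follows the same template with the complex-weighted test vectors $\xi_\pm=\tfrac{1}{\sqrt 2}(x,\pm ix)$ at the lower-bound step (which yield $\langle(T\mp iS)x,x\rangle_A$), and the change of variables $u=(x-iy)/\sqrt 2$, $v=(x+iy)/\sqrt 2$ at the upper-bound step. Since $\|iy\|_A=\|y\|_A$, the parallelogram identity again furnishes $\|u\|_A^2+\|v\|_A^2=1$, and the analogous expansion simplifies to $\langle(T-iS)u,u\rangle_A+\langle(T+iS)v,v\rangle_A$. The point I expect to demand the most care is the cross-term cancellation in (ii): the factor $i$ occupying the second slot of $\xi$ emerges as $\overline{i}=-i$ out of the conjugate-linear argument of $\langle\cdot,\cdot\rangle_A$, and must combine with the $-S$ entry of the block matrix to produce exactly the $\mp iS$ correction. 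Once that algebraic reduction is verified, the argument closes as in (i).
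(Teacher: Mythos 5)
Your argument is correct, and it is genuinely different from the paper's. The paper proves (i) by conjugating $\begin{pmatrix} T&S\\ S&T\end{pmatrix}$ with the $\mathbb{A}$-unitary $\mathbb{U}=\tfrac{1}{\sqrt 2}\begin{pmatrix} I&I\\ -I&I\end{pmatrix}$, invoking the invariance $\omega_{\mathbb{A}}(\mathbb{U}^{\sharp_{\mathbb{A}}}\cdot\mathbb{U})=\omega_{\mathbb{A}}(\cdot)$ (Lemma \ref{weak}) to reduce to the block-diagonal matrix $\operatorname{diag}(T-S,T+S)$ and then applying Lemma \ref{lemma1}\,(i); part (ii) is the same with $\mathbb{U}=\tfrac{1}{\sqrt 2}\begin{pmatrix} I&iI\\ iI&I\end{pmatrix}$. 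You instead establish the two-sided bound by hand: the test vectors $\tfrac{1}{\sqrt2}(x,\pm x)$, resp.\ $\tfrac{1}{\sqrt2}(x,\pm ix)$, give the lower bound, and the substitution $u,v=(x\pm y)/\sqrt2$, resp.\ $(x\mp iy)/\sqrt2$, together with the parallelogram law for $\|\cdot\|_A$ yields the exact identity $\langle\mathbb{T}\xi,\xi\rangle_{\mathbb{A}}=\langle(T+S)u,u\rangle_A+\langle(T-S)v,v\rangle_A$ (and its analogue with $T\mp iS$), from which the upper bound follows; I checked the cross-term algebra in both parts and it is right. What your route buys is self-containedness: no appeal to the cited $A$-unitary invariance lemma or to the block-diagonal formula, only Cauchy--Schwarz and the parallelogram identity for the semi-inner product. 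What the paper's route buys is brevity and reuse of already-stated machinery. Two small remarks: your stated motivation is slightly off, since the paper's similarity proof does \emph{not} force $T,S\in\mathcal{B}_A(\mathcal{H})$ --- only the fixed unitary $\mathbb{U}$ needs an $\mathbb{A}$-adjoint, which is why Lemma \ref{lem100} is stated for $\mathcal{B}_{A^{1/2}}(\mathcal{H})$; and in your upper bound you should note the degenerate case $\|u\|_A=0$ (possible for $u\neq 0$ since $\|\cdot\|_A$ is only a seminorm), where $|\langle(T\pm S)u,u\rangle_A|\le\|(T\pm S)u\|_A\|u\|_A=0$ by Cauchy--Schwarz, so the estimate $|\langle Ru,u\rangle_A|\le\omega_A(R)\|u\|_A^2$ still holds.
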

In order to prove Lemma \ref{lem100} we need the following result.
\begin{lemqt}\label{weak}(\cite{bfeki})
Let $T\in \mathcal{B}_{A^{1/2}}(\mathcal{H})$. Then,
\begin{equation*}
\omega_A(U^{\sharp_A}TU)=\omega_A(T),
\end{equation*}
for any $A$-unitary operator $U\in\mathcal{B}_A(\mathcal{H})$.
\end{lemqt}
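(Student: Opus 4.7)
The plan is to prove the two inequalities $\omega_A(U^{\sharp_A}TU)\le\omega_A(T)$ and $\omega_A(T)\le\omega_A(U^{\sharp_A}TU)$ separately, both by manipulating $\langle U^{\sharp_A}TUx,x\rangle_A$ directly via the defining identity of the $A$-adjoint. First I would establish the key algebraic identity
\begin{equation*}
\langle U^{\sharp_A}TUx,x\rangle_A=\langle TUx,Ux\rangle_A\quad\text{for all }x\in\mathcal{H},
\end{equation*}
which is immediate from $\langle w,Uz\rangle_A=\langle U^{\sharp_A}w,z\rangle_A$ with $w=TUx$ and $z=x$ (this follows from the definition $\langle Uz,w\rangle_A=\langle z,U^{\sharp_A}w\rangle_A$ by conjugation).

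For the upper bound, if $\|x\|_A=1$ then by $A$-unitarity $\|Ux\|_A=\|x\|_A=1$, so
\begin{equation*}
|\langle U^{\sharp_A}TUx,x\rangle_A|=|\langle T(Ux),Ux\rangle_A|\le\omega_A(T),
\end{equation*}
and taking the supremum over unit $A$-vectors $x$ yields $\omega_A(U^{\sharp_A}TU)\le\omega_A(T)$.

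For the reverse inequality, given $y$ with $\|y\|_A=1$ I would set $x:=U^{\sharp_A}y$, noting $\|x\|_A=\|y\|_A=1$ by the second half of $A$-unitarity. The crux is then to verify
\begin{equation*}
\langle Ty,y\rangle_A=\langle TUx,Ux\rangle_A,\quad\text{i.e.,}\quad \langle Ty,y\rangle_A=\langle T(UU^{\sharp_A}y),UU^{\sharp_A}y\rangle_A.
\end{equation*}
Writing $e:=y-UU^{\sharp_A}y$, I would first show $e\in\mathcal{N}(A)$ by expanding $\|e\|_A^2$ and computing $\langle y,UU^{\sharp_A}y\rangle_A=\langle U^{\sharp_A}y,U^{\sharp_A}y\rangle_A=\|y\|_A^2$ (using the $A$-adjoint identity and $A$-unitarity); this gives $\|e\|_A^2=\|y\|_A^2-2\|y\|_A^2+\|UU^{\sharp_A}y\|_A^2=0$. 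Since $T\in\mathcal{B}_{A^{1/2}}(\mathcal{H})$ satisfies $\|Te\|_A\le\lambda\|e\|_A=0$, we have $Te\in\mathcal{N}(A)$, and because $\overline{\mathcal{R}(A)}=\mathcal{N}(A)^{\perp}$, all the cross terms in the expansion $\langle T(z+e),z+e\rangle_A$ (with $z=UU^{\sharp_A}y$) vanish in the $A$-inner product. Combining with the first step gives $|\langle Ty,y\rangle_A|=|\langle U^{\sharp_A}TUx,x\rangle_A|\le\omega_A(U^{\sharp_A}TU)$, and the supremum over $y$ completes the proof.

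The main obstacle is the reverse inequality: $U$ is not assumed surjective, so the naive change of variables $y=Ux$ fails. The resolution is to exploit that $A$-unitarity forces $y-UU^{\sharp_A}y$ to be $A$-null, together with the invariance $T\mathcal{N}(A)\subseteq\mathcal{N}(A)$ that comes from $T\in\mathcal{B}_{A^{1/2}}(\mathcal{H})$; this lets one treat $UU^{\sharp_A}y$ as effectively equal to $y$ inside the $A$-quadratic form.
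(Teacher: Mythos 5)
The paper offers no proof of this lemma: it is imported verbatim from \cite{bfeki} as a known result, so there is nothing internal to compare your argument against. Your proof is correct and self-contained. The identity $\langle U^{\sharp_A}TUx,x\rangle_A=\langle TUx,Ux\rangle_A$ together with $\|Ux\|_A=\|x\|_A$ gives the easy inequality, and you correctly identify the genuine difficulty in the converse, namely that $U$ need not be surjective (nor is $UU^{\sharp_A}$ the identity, only $U^{\sharp_A}U=(U^{\sharp_A})^{\sharp_A}U^{\sharp_A}=P_{\overline{\mathcal{R}(A)}}$). Your resolution is sound: the computation $\langle y,UU^{\sharp_A}y\rangle_A=\|U^{\sharp_A}y\|_A^2=\|y\|_A^2$ forces $\|y-UU^{\sharp_A}y\|_A=0$, and since $T\in\mathcal{B}_{A^{1/2}}(\mathcal{H})$ maps $\mathcal{N}(A)$ into $\mathcal{N}(A)$ while $\mathcal{R}(A)\perp\mathcal{N}(A)$, every cross term in the expansion of $\langle T(z+e),z+e\rangle_A$ vanishes, so $\langle Ty,y\rangle_A=\langle U^{\sharp_A}TU(U^{\sharp_A}y),U^{\sharp_A}y\rangle_A$ with $\|U^{\sharp_A}y\|_A=1$. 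This is essentially the standard argument for such invariance statements in the semi-Hilbertian setting (the published proof in \cite{bfeki} reduces matters to $P_{\overline{\mathcal{R}(A)}}$ in the same spirit), so no correction is needed.
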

Now, we state the proof of Lemma \ref{lem100}.
\begin{proof}[Proof of Lemma \ref{lem100}]
\noindent (i)\;Let $\mathbb{U}=\frac{1}{\sqrt{2}}\begin{pmatrix}
	I & I\\
	-I & I
	\end{pmatrix}$. By using Lemma \ref{lemma1} (iii), we see that
$$\mathbb{U}^{\sharp_\mathbb{A}}=\frac{1}{\sqrt{2}}\begin{pmatrix}
	P_{\overline{\mathcal{R}(A)}} &-P_{\overline{\mathcal{R}(A)}}\\
	P_{\overline{\mathcal{R}(A)}} &P_{\overline{\mathcal{R}(A)}}
	\end{pmatrix}.$$
So, by using the fact that $AP_{\overline{\mathcal{R}(A)}}=A$, we can verify that $\|\mathbb{U}x\|_{\mathbb{A}}=\|\mathbb{U}^{\sharp_{\mathbb{A}}}x\|_{\mathbb{A}}=\|x\|_{\mathbb{A}}$ for all $x\in \mathcal{H}\oplus \mathcal{H}$. Hence $\mathbb{U}$ is $\mathbb{A}$-unitary. So, by Lemma \ref{weak} we see that
	\begin{align*}
	\omega_\mathbb{A}\left[\begin{pmatrix}
	T&S\\
	S&T
	\end{pmatrix}\right]
&= \omega_\mathbb{A}\left[\mathbb{U}^{\sharp_\mathbb{A}}\begin{pmatrix}
	T&S\\
	S&T
	\end{pmatrix}\mathbb{U}\right]\\
	&=\omega_{\mathbb{A}}\left[\begin{pmatrix}
	P_{\overline{\mathcal{R}(A)}} &O\\
	O &P_{\overline{\mathcal{R}(A)}}
	\end{pmatrix}
	\begin{pmatrix}
	T-S &O\\
	O&T+S
	\end{pmatrix}\right]\\
	&=\omega_{\mathbb{A}}\left[
	\begin{pmatrix}
	T-S &O\\
	O&T+S
	\end{pmatrix}\right]\\
	&=\max\{\omega_A(T+S),\omega_A(T-S)\},
	\end{align*}
where the last equality follows from Lemma \ref{lemma1} (i).
\par \vskip 0.1 cm \noindent (ii)\;By considering the $\mathbb{A}$-unitary operator $\mathbb{U}=\frac{1}{\sqrt{2}}\begin{pmatrix}
I & iI\\
iI & I
\end{pmatrix}$ and proceeding as above we get the desired result.
\end{proof}

As an application of Theorem \ref{thm101} together with Lemma \ref{lem100}, we state the following corollary.
\begin{corollary}
Let $P,Q\in \mathcal{B}_{A}(\mathcal{H})$. Then for all $\lambda\in [0,1]$, we have
\[\omega_A\left(P\pm Q\right)\leq \min\{\mu,\nu \},\]
where
\begin{align*}
\mu
& =\omega_A(P)+\frac{1}{2}\left(\|P\|_A+\sqrt{\|\lambda^2 PP^{\sharp_A}+QQ^{\sharp_A}\|_A}+\sqrt{\|(1-\lambda)^2 PP^{\sharp_A}+Q^{\sharp_A}Q\|_A}\right),
\end{align*}
and
\begin{align*}
\nu
& =\frac{3}{2}\omega_A(P)+\frac{1}{2}\bigg[\sqrt{\lambda^2 \omega^2_A(P)+\|Q\|_A^2}
+\sqrt{(1-\lambda)^2 \omega^2_A(P)+\|Q\|_A^2}\bigg].
\end{align*}
\end{corollary}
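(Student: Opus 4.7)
The plan is to apply Theorem \ref{thm101} to the block matrix $\mathbb{T}:=\begin{pmatrix} P & Q \\ Q & P\end{pmatrix}$ (i.e.\ with $R=Q$, $S=P$) and combine with Lemma \ref{lem100}(i), which gives
\[
\omega_A(P\pm Q)\le\max\{\omega_A(P+Q),\omega_A(P-Q)\}=\omega_\mathbb{A}(\mathbb{T}).
\]
Two different upper estimates of $\omega_\mathbb{A}(\mathbb{T})$ will then produce $\mu$ and $\nu$. The $\mu$-bound is immediate: direct substitution of $R=Q$ and $S=P$ into the conclusion of Theorem \ref{thm101} reproduces $\mu$ verbatim, since $\tfrac12\cdot 2\omega_A(S)=\omega_A(P)$ supplies the leading summand of $\mu$, while the $\|P\|_A$ term and the two square roots already appear in the required form.

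For the $\nu$-bound, I would re-run the argument of Theorem \ref{thm101} but replace its final factorization step by a direct application of Lemma \ref{lm5}. In that proof, after the triangle inequality for $\|\Re_\mathbb{A}(e^{i\theta}\mathbb{T})\|_\mathbb{A}$, one reduces to bounding the $\mathbb{A}$-selfadjoint matrices
\[
M_\mu:=\begin{pmatrix} 2\mu\Re_A(e^{i\theta}P) & e^{i\theta}Q \\ e^{-i\theta}Q^{\sharp_A} & 0\end{pmatrix},\qquad \mu\in\{\lambda,1-\lambda\}
\]
(the matrix for the $(1-\lambda)$-block comes with $Q$ and $Q^{\sharp_A}$ swapped, but by \eqref{diez} it satisfies exactly the same bound). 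Using the $\mathbb{A}$-selfadjointness and Lemma \ref{ll2020}, $\omega_\mathbb{A}(M_\mu)=r_\mathbb{A}(M_\mu)$, so Lemma \ref{lm5} yields
\[
\omega_\mathbb{A}(M_\mu)\le r\!\left[\begin{pmatrix} 2\mu\|\Re_A(e^{i\theta}P)\|_A & \|Q\|_A\\ \|Q\|_A & 0\end{pmatrix}\right]=\mu\|\Re_A(e^{i\theta}P)\|_A+\sqrt{\mu^2\|\Re_A(e^{i\theta}P)\|_A^2+\|Q\|_A^2}.
\]
The decisive inequality $\|\Re_A(e^{i\theta}P)\|_A\le\omega_A(P)$, a consequence of \eqref{zm}, then replaces $\|\Re_A(e^{i\theta}P)\|_A$ by $\omega_A(P)$ throughout. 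Feeding the resulting bounds (at $\mu=\lambda$ and $\mu=1-\lambda$) back into the triangle inequality from the proof of Theorem \ref{thm101} with $\omega_A(S)=\omega_A(P)$, taking $\sup_{\theta\in\mathbb{R}}$, and invoking \eqref{zm} produces $\omega_\mathbb{A}(\mathbb{T})\le\nu$.

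The main obstacle is purely arithmetic: one has to verify the formula $\tfrac{a+\sqrt{a^2+4b^2}}{2}$ for the spectral radius of the scalar matrix $\begin{pmatrix}a&b\\b&0\end{pmatrix}$, and then check that the pieces $\tfrac12\lambda\omega_A(P)$, $\tfrac12(1-\lambda)\omega_A(P)$ together with the outer $\omega_A(P)$ recombine to exactly the coefficient $\tfrac32$ in front of $\omega_A(P)$ appearing in $\nu$. No lemma beyond those already in the excerpt is required.
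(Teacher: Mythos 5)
Your argument is correct, and its skeleton coincides with the paper's: both proofs bound $\omega_A(P\pm Q)$ by $\omega_\mathbb{A}\bigl[\begin{smallmatrix} P & Q\\ Q & P\end{smallmatrix}\bigr]$ via Lemma \ref{lem100}(i), and both obtain $\mu$ by substituting $R=Q$, $S=P$ into Theorem \ref{thm101}. The difference is in how $\nu$ is obtained: the paper simply quotes the inequality \eqref{ineq106} of Bhunia, Nayak and Paul \cite{BPN1} and specializes it to $R=Q$, $S=P$, whereas you re-derive the needed special case from scratch by re-entering the proof of Theorem \ref{thm101}, keeping the three-block splitting and the diagonal contribution $\omega_A(S)=\omega_A(P)$, but replacing the factorization/commutativity step by a direct application of Lemma \ref{ll2020} and Lemma \ref{lm5} to the $\mathbb{A}$-selfadjoint blocks, followed by $\|\Re_A(e^{i\theta}P)\|_A\le\omega_A(P)$ from \eqref{zm} and the supremum over $\theta$. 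Your bookkeeping checks out: the swap of $Q$ and $Q^{\sharp_A}$ in the $(1-\lambda)$-block is harmless by \eqref{diez}, the spectral radius of $\bigl(\begin{smallmatrix} a & b\\ b & 0\end{smallmatrix}\bigr)$ is indeed $\tfrac{a+\sqrt{a^2+4b^2}}{2}$, and $\tfrac12\lambda+\tfrac12(1-\lambda)+1=\tfrac32$ gives the right coefficient. What your route buys is self-containedness — in effect you reprove the relevant instance of \eqref{ineq106} with the tools already in the paper, so the corollary no longer depends on the external reference; what the paper's route buys is brevity, since citing \cite{BPN1} disposes of the $\nu$ bound in one line.
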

\begin{proof}
Recall first that it has been recently proved in \cite{BPN1} that
\begin{align}\label{ineq106}
&\omega_{\mathbb{A}}\left[\begin{pmatrix}
P&Q \\
R&S
\end{pmatrix}\right] \nonumber \\
&\leq \frac{1}{2}\bigg[\omega_A(P)+2\omega_A(S)+\sqrt{\lambda^2 \omega^2_A(P)+\|Q\|_A^2}
+\sqrt{(1-\lambda)^2 \omega^2_A(P)+\|R\|_A^2}\bigg].
\end{align}
So, we get the desired result by applying Theorem \ref{thm101} together with \eqref{ineq106} and Lemma \ref{lem100}
\end{proof}
Now we state the following theorem which generalizes a recent result proved by Rout et al. in \cite{rout} since $\mathcal{B}_{A}(\mathcal{H})$ is in general a proper subspace of $\mathcal{B}_{A^{1/2}}(\mathcal{H})$.
\begin{theorem}\label{them10}
Let $\mathbb{T}=\begin{pmatrix}
P&Q \\
R&S
\end{pmatrix}$ be such that $P, Q, R, S\in \mathcal{B}_{A^{1/2}}(\mathcal{H})$. Then,
\begin{align}\label{upper1}
\omega_{\mathbb{A}}\left[\begin{pmatrix}
P&Q \\
R&S
\end{pmatrix}\right]
&\leq \max\{\omega_A(P), \omega_A(S)\}+\frac{\omega_A(Q+R)+\omega_A(Q-R)}{2}.
\end{align}
\end{theorem}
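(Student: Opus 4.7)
The plan is to split $\mathbb{T}$ into its diagonal and off-diagonal parts, apply the triangle inequality for the seminorm $\omega_{\mathbb{A}}(\cdot)$, evaluate the diagonal piece directly via Lemma \ref{lemma1}(i), and then decompose the off-diagonal piece cleverly so that Lemma \ref{lem100} can finish the job.

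Concretely, I would first write
$$\begin{pmatrix} P & Q \\ R & S \end{pmatrix} = \begin{pmatrix} P & O \\ O & S \end{pmatrix} + \begin{pmatrix} O & Q \\ R & O \end{pmatrix}.$$
Using that $\omega_{\mathbb{A}}(\cdot)$ is a seminorm on $\mathcal{B}_{\mathbb{A}^{1/2}}(\mathcal{H}\oplus\mathcal{H})$ together with Lemma \ref{lemma1}(i), I get
$$\omega_{\mathbb{A}}(\mathbb{T}) \;\leq\; \max\{\omega_A(P),\omega_A(S)\} \;+\; \omega_{\mathbb{A}}\left[\begin{pmatrix} O & Q \\ R & O \end{pmatrix}\right].$$
It remains to show that the off-diagonal $\mathbb{A}$-numerical radius is bounded by $\tfrac{1}{2}\bigl(\omega_A(Q+R)+\omega_A(Q-R)\bigr)$.

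For that off-diagonal estimate, the key algebraic observation is the identity
$$\begin{pmatrix} O & Q \\ R & O \end{pmatrix} = \frac{1}{2}\begin{pmatrix} O & Q+R \\ Q+R & O \end{pmatrix} + \frac{1}{2}\begin{pmatrix} O & Q-R \\ -(Q-R) & O \end{pmatrix}.$$
Applying the triangle inequality once more and then Lemma \ref{lem100}(i) with $T=O$, $S=Q+R$ and Lemma \ref{lem100}(ii) with $T=O$, $S=-(Q-R)$, together with the trivial facts $\omega_A(-X)=\omega_A(X)$ and $\omega_A(\pm iX)=\omega_A(X)$, yields
$$\omega_{\mathbb{A}}\left[\begin{pmatrix} O & Q+R \\ Q+R & O \end{pmatrix}\right]=\omega_A(Q+R),\qquad \omega_{\mathbb{A}}\left[\begin{pmatrix} O & Q-R \\ -(Q-R) & O \end{pmatrix}\right]=\omega_A(Q-R).$$
Adding these two bounds and combining with the first step gives exactly \eqref{upper1}.

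The main obstacle is identifying the right decomposition of the off-diagonal matrix so that Lemma \ref{lem100} applies cleanly: one needs both a \emph{symmetric} summand (to use part (i)) and a \emph{skew-symmetric} summand (to use part (ii)), and the coefficients $\tfrac12$ must be chosen so that the two pieces add back to $\begin{pmatrix} O & Q \\ R & O\end{pmatrix}$. Beyond this algebraic trick, everything else is a routine application of the triangle inequality for the seminorm $\omega_{\mathbb{A}}(\cdot)$ on $\mathcal{B}_{\mathbb{A}^{1/2}}(\mathcal{H}\oplus\mathcal{H})$; no spectral-radius or Lemma \ref{lm5} machinery is needed, which is why the hypothesis can be weakened from $P,Q,R,S\in\mathcal{B}_A(\mathcal{H})$ (as in \cite{rout}) to $P,Q,R,S\in\mathcal{B}_{A^{1/2}}(\mathcal{H})$.
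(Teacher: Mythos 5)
Your proof is correct, and its first half is exactly the paper's: split $\mathbb{T}$ into $\begin{pmatrix} P & O\\ O & S\end{pmatrix}+\begin{pmatrix} O & Q\\ R & O\end{pmatrix}$, use the triangle inequality and Lemma \ref{lemma1}(i). The difference lies in how you bound the off-diagonal block. The paper conjugates $\begin{pmatrix} O & Q\\ R & O\end{pmatrix}$ by the $\mathbb{A}$-unitary $\tfrac{1}{\sqrt{2}}\begin{pmatrix} I & -I\\ I & I\end{pmatrix}$, invokes Lemma \ref{weak} to preserve the $\mathbb{A}$-numerical radius, and then splits the conjugated matrix into its diagonal and anti-diagonal parts, finishing with Lemma \ref{lemma1}(i) and Lemma \ref{lem100}; you instead decompose $\begin{pmatrix} O & Q\\ R & O\end{pmatrix}$ directly as $\tfrac12\begin{pmatrix} O & Q+R\\ Q+R & O\end{pmatrix}+\tfrac12\begin{pmatrix} O & Q-R\\ -(Q-R) & O\end{pmatrix}$ and evaluate the two pieces by Lemma \ref{lem100}(i) (with $T=O$, $S=Q+R$) and Lemma \ref{lem100}(ii) (with $T=O$, $S=-(Q-R)$), using $\omega_A(\pm iX)=\omega_A(X)$. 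Both routes produce the same intermediate bound \eqref{qq2}, so the theorems coincide; your version is slightly shorter, avoids the explicit computation of $\mathbb{U}^{\sharp_\mathbb{A}}$ and the appeal to Lemma \ref{weak} inside this proof, and makes transparent that only Lemma \ref{lem100} (valid for operators in $\mathcal{B}_{A^{1/2}}(\mathcal{H})$) is needed, which justifies the weakened hypothesis. The only caveat is that the unitary-conjugation machinery has not really disappeared: it is hidden in the paper's proof of Lemma \ref{lem100} itself, so the two arguments rest on the same underlying tool.
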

\begin{proof}
	Using triangle inequality we have
\begin{align}\label{sho}
\omega_{\mathbb{A}}\left[\begin{pmatrix}
P&Q \\
R&S
\end{pmatrix}\right]&\leq \omega_{\mathbb{A}}\left[\begin{pmatrix}
P&O \\
O&S
\end{pmatrix}\right]+\omega_{\mathbb{A}}\left[\begin{pmatrix}
O&Q \\
R&O
\end{pmatrix}\right]\nonumber\\
&=\max\{\omega_A(P), \omega_A(S)\}+\omega_{\mathbb{A}}\left[\begin{pmatrix}
O&Q \\
R&O
\end{pmatrix}\right],
\end{align}
where the last equality follows by Lemma \ref{lemma1} (i). 	Let $\mathbb{U}=\frac{1}{\sqrt{2}}\begin{pmatrix}
	I & -I\\
	I & I
	\end{pmatrix}$. By proceeding as above we prove that $\mathbb{U}$ is $\mathbb{A}$-unitary. So, by Lemma \ref{weak} we see that
	\begin{align*}
	\omega_\mathbb{A}\left[\begin{pmatrix}
	O&Q \\
	R&O
	\end{pmatrix}\right]
&= \omega_\mathbb{A}\left[\mathbb{U}^{\sharp_\mathbb{A}}\begin{pmatrix}
	O&Q \\
	R&O
	\end{pmatrix}\mathbb{U}\right]\nonumber\\
	&=\frac{1}{2}\omega_{\mathbb{A}}\left[\begin{pmatrix}
	P_{\overline{\mathcal{R}(A)}} &O\\
	O &P_{\overline{\mathcal{R}(A)}}
	\end{pmatrix}
	\begin{pmatrix}
	R+Q &-R+Q\\
	R-Q &-R-Q
	\end{pmatrix}\right]\nonumber\\
	&=\frac{1}{2}\omega_{\mathbb{A}}\left[
	\begin{pmatrix}
	R+Q &-R+Q\\
	R-Q &-R-Q
	\end{pmatrix}\right]\nonumber\\
	&\leq\frac{1}{2}\omega_{\mathbb{A}}\left[
	\begin{pmatrix}
	R+Q &O\\
	O &-R-Q
	\end{pmatrix}\right]
+\frac{1}{2}\omega_{\mathbb{A}}\left[
	\begin{pmatrix}
	O &-R+Q\\
	R-Q &O
	\end{pmatrix}\right]
\nonumber\\
	&=\frac{1}{2}\Big(\omega_A(Q+R)+\omega_A(Q-R)\Big),
	\end{align*}
where the last equality follows from Lemma \ref{lem100}. So,
\begin{equation}\label{qq2}
	\omega_\mathbb{A}\left[\begin{pmatrix}
	O&Q \\
	R&O
	\end{pmatrix}\right]\leq \frac{1}{2}\Big(\omega_A(Q+R)+\omega_A(Q-R)\Big).
\end{equation}
Combining \eqref{qq2} together with \eqref{sho} yields to the desired result.
\end{proof}

Our next objective is to present an improvement of the inequality \eqref{upper1}. To do this, we need the following lemma.

\begin{lemma}\label{p4200}(\cite{HORN})	Let $T=[t_{ij}]\in M_n(\mathbb{C})$ be such that $t_{ij}\geq 0$ for all $i, j=1, 2,\ldots, n$. Then
	$$\omega(T)= \frac{1}{2}r([t_{ij}+t_{ji}]).$$
\end{lemma}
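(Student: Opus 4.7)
The plan is to exploit the entrywise nonnegativity of $T$ to reduce the numerical radius to a Rayleigh-type supremum over nonnegative vectors, and then invoke a Perron--Frobenius-style step.

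First, for any $x=(x_1,\ldots,x_n)\in\mathbb{C}^n$, the triangle inequality gives the pointwise bound
\[
|\langle Tx,x\rangle|=\Bigl|\sum_{i,j} t_{ij}\,x_j\,\overline{x_i}\Bigr|\le \sum_{i,j} t_{ij}\,|x_j|\,|x_i|=\langle T|x|,|x|\rangle,
\]
where $|x|:=(|x_1|,\ldots,|x_n|)$ has the same Euclidean norm as $x$. Since $\langle Ty,y\rangle$ is a real nonnegative scalar for $y\ge 0$, taking the supremum over $\|x\|=1$ yields
\[
\omega(T)=\sup\bigl\{\langle Tx,x\rangle\,;\;\|x\|=1,\;x\ge 0\bigr\}.
\]

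Next, for real $x$, the scalar $\langle Tx,x\rangle$ is real and so equals its own conjugate $\sum_{i,j}t_{ji}x_jx_i=\langle T^*x,x\rangle$; averaging gives
\[
\langle Tx,x\rangle=\bigl\langle Bx,x\bigr\rangle, \qquad B:=\tfrac{1}{2}(T+T^*).
\]
Combining this with the previous identity,
\[
\omega(T)=\sup\bigl\{\langle Bx,x\rangle\,;\;\|x\|=1,\;x\ge 0\bigr\}.
\]

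Finally, $B$ is Hermitian with nonnegative entries $\tfrac{1}{2}(t_{ij}+t_{ji})$. Applying the same pointwise bound from the first step to $B$ shows that the unrestricted supremum $\sup_{\|x\|=1}\langle Bx,x\rangle$ is already attained on nonnegative vectors; hence it equals the restricted supremum above. Since $B$ is Hermitian, that quantity is exactly $\lambda_{\max}(B)=\|B\|=r(B)$. Therefore
\[
\omega(T)=r(B)=\tfrac{1}{2}r(T+T^*),
\]
which is the claim. The main—and only mildly delicate—step is the last one; but the universal pointwise inequality from the opening step subsumes Perron--Frobenius here, so no irreducibility or positivity assumption on $T+T^*$ is needed.
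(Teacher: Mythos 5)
Your argument is correct. Note that the paper itself offers no proof of this lemma: it is quoted verbatim from Horn and Johnson \cite{HORN}, so there is no ``paper proof'' to compare against; your contribution is a self-contained verification, and it is the standard one. The three steps all check out: entrywise nonnegativity gives $|\langle Tx,x\rangle|\le\langle T|x|,|x|\rangle$, so $\omega(T)$ is a supremum over nonnegative unit vectors; on real vectors the quadratic form of $T$ agrees with that of $B=\tfrac12(T+T^{*})$; and the same pointwise bound, applied to $B$, removes the nonnegativity restriction, identifying $\omega(T)$ with $\sup_{\|x\|=1}\langle Bx,x\rangle=\lambda_{\max}(B)$. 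The only place you are terse is the chain $\lambda_{\max}(B)=\|B\|=r(B)$: for a general Hermitian matrix $\lambda_{\max}$ can be strictly smaller than $\|B\|$ (e.g.\ when $B$ is negative definite), so the first equality genuinely uses the nonnegative entries. Your closing remark that the opening inequality ``subsumes Perron--Frobenius'' is exactly the right justification — applying $|\langle Bv,v\rangle|\le\langle B|v|,|v|\rangle\le\lambda_{\max}(B)$ to a unit eigenvector $v$ of $\lambda_{\min}(B)$ gives $|\lambda_{\min}(B)|\le\lambda_{\max}(B)$, hence $\lambda_{\max}(B)=\|B\|=r(B)$ — and it would be worth writing that one line out explicitly rather than leaving it as an aside.
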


\begin{theorem}\label{them100}
Let $\mathbb{T}=\begin{pmatrix}
P&Q \\
R&S
\end{pmatrix}$ be such that $P, Q, R, S\in \mathcal{B}_{A}(\mathcal{H})$. Then,
\begin{align*}
&\omega_{\mathbb{A}}\left[\begin{pmatrix}
P&Q \\
R&S
\end{pmatrix}\right]\\
&\leq \frac{1}{2}\bigg(\omega_A(P)+\omega_A(S)+\sqrt{(\omega_A(P)-\omega_A(S))^2+(\omega_A(Q+R)+\omega_A(Q-R))^2}\bigg).
\end{align*}
Moreover, the inequality is sharper than the inequality \eqref{upper1}.
\end{theorem}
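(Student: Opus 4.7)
The plan is to bound $|\langle \mathbb{T} y, y \rangle_\mathbb{A}|$ for a generic $\mathbb{A}$-unit vector $y = (y_1, y_2) \in \mathcal{H} \oplus \mathcal{H}$ while \emph{preserving} the coupling between $\alpha := \|y_1\|_A$ and $\beta := \|y_2\|_A$ (which satisfy $\alpha^2 + \beta^2 = 1$), instead of applying the triangle inequality as in Theorem~\ref{them10}. Keeping the factor $\alpha\beta$ explicit in front of the cross-term turns the final estimate into a quadratic form in $(\alpha, \beta)$ on the unit circle, and the square-root term in the target inequality appears as its maximum.

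First I would expand
\begin{equation*}
\langle \mathbb{T} y, y \rangle_\mathbb{A} = \langle P y_1, y_1 \rangle_A + \langle S y_2, y_2 \rangle_A + \bigl(\langle Q y_2, y_1 \rangle_A + \langle R y_1, y_2 \rangle_A\bigr),
\end{equation*}
and control the diagonal terms directly via $|\langle P y_1, y_1\rangle_A| \leq \omega_A(P)\alpha^2$ and $|\langle S y_2, y_2\rangle_A| \leq \omega_A(S)\beta^2$. For the cross-term I would set $e_i := y_i/\|y_i\|_A$ when nonzero (the degenerate case where $\alpha$ or $\beta$ vanishes is handled separately using that $P,Q,R,S \in \mathcal{B}_A(\mathcal{H})$, which forces the entire cross-term to vanish) and observe that $z := \tfrac{1}{\sqrt{2}}(e_1, e_2)$ is an $\mathbb{A}$-unit vector satisfying
\begin{equation*}
\left\langle \begin{pmatrix} O & Q \\ R & O \end{pmatrix} z,\, z \right\rangle_{\!\mathbb{A}} = \tfrac{1}{2}\bigl(\langle Q e_2, e_1 \rangle_A + \langle R e_1, e_2 \rangle_A\bigr).
\end{equation*}
Applying Theorem~\ref{them10} to $\begin{pmatrix} O & Q \\ R & O \end{pmatrix}$ gives $\omega_\mathbb{A}\!\begin{pmatrix} O & Q \\ R & O \end{pmatrix} \leq \tfrac{1}{2}(\omega_A(Q+R) + \omega_A(Q-R))$, and restoring the scaling by $\alpha, \beta$ yields the key cross-term estimate
\begin{equation*}
|\langle Q y_2, y_1 \rangle_A + \langle R y_1, y_2 \rangle_A| \leq \alpha\beta \bigl(\omega_A(Q+R) + \omega_A(Q-R)\bigr).
\end{equation*}

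Writing $a = \omega_A(P)$, $d = \omega_A(S)$, and $c = \omega_A(Q+R) + \omega_A(Q-R)$, the combined estimate reads $|\langle \mathbb{T} y, y \rangle_\mathbb{A}| \leq a\alpha^2 + d\beta^2 + c\alpha\beta$. Parametrizing $\alpha = \cos t$, $\beta = \sin t$ for $t \in [0, \pi/2]$ and invoking the double-angle identities rewrites the right-hand side as $\tfrac{a+d}{2} + \tfrac{a-d}{2}\cos 2t + \tfrac{c}{2}\sin 2t$, whose maximum over $t$ is precisely $\tfrac{1}{2}(a+d) + \tfrac{1}{2}\sqrt{(a-d)^2 + c^2}$; taking the supremum over $y$ then delivers the claimed inequality. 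Sharpness relative to \eqref{upper1} is immediate from $\sqrt{(a-d)^2 + c^2} \leq |a-d| + c$, which after simplification shows the new upper bound is at most $\max\{a,d\} + c/2$. The main obstacle is spotting the normalization trick that produces the $\alpha\beta$ prefactor in the cross-term; without it, one is forced into a triangle-inequality argument that reproduces the coarser Theorem~\ref{them10} bound rather than the sharper square-root form.
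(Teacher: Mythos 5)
Your proof is correct, and it reaches the bound by a more self-contained route than the paper. The paper's proof keeps the same two ingredients you use for the structure of the estimate --- the split into the diagonal part and the off-diagonal block, and the bound $\omega_{\mathbb{A}}\left[\begin{pmatrix} O&Q\\ R&O\end{pmatrix}\right]\leq \tfrac{1}{2}\big(\omega_A(Q+R)+\omega_A(Q-R)\big)$ (this is exactly \eqref{qq2}, established inside the proof of Theorem~\ref{them10}, which you legitimately recover by applying Theorem~\ref{them10} with $P=S=O$; no circularity) --- but it handles the ``pinching to a scalar $2\times 2$ matrix'' step by quoting an external result, namely \cite[Theorem 2.3]{feki04}, which gives $\omega_{\mathbb{A}}(\mathbb{T})\leq \omega$ of the nonnegative matrix with diagonal $\omega_A(P),\omega_A(S)$ and off-diagonal entries $\omega_{\mathbb{A}}\left[\begin{pmatrix} O&Q\\ R&O\end{pmatrix}\right]$, and then evaluates that numerical radius as a spectral radius via Lemma~\ref{p4200} (Horn--Johnson). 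Your argument effectively inlines a proof of the special case of that cited theorem: the expansion of $\langle \mathbb{T}y,y\rangle_{\mathbb{A}}$, the normalization trick producing the $\alpha\beta$ prefactor (with the degenerate case $\alpha\beta=0$ correctly disposed of, since $A$-boundedness of the entries kills the cross-term), and the maximization of $a\alpha^2+d\beta^2+c\alpha\beta$ on $\alpha^2+\beta^2=1$, which reproduces exactly the eigenvalue $\tfrac{1}{2}\big(a+d+\sqrt{(a-d)^2+c^2}\big)$ that the paper gets from Lemma~\ref{p4200}. Your sharpness comparison with \eqref{upper1} via $\sqrt{(a-d)^2+c^2}\leq |a-d|+c$ is the same as the paper's. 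What the paper's route buys is brevity given the available references; what yours buys is independence from \cite{feki04} and an explicit view of where the square-root term comes from.
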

\begin{proof}
It follows from \cite[Theorem 2.3]{feki04} that
	\begin{align*}
	\omega_\mathbb{A}\left(\mathbb{T}\right)
&\leq\omega\left[\begin{pmatrix}
	\omega_\mathbb{A}\left({P}\right) & \omega_{\mathbb{A}}\begin{pmatrix}
	O&Q \\
	R&O
	\end{pmatrix}\\
	\omega_{\mathbb{A}}\begin{pmatrix}
	O&Q \\
	R&O
	\end{pmatrix} & \omega_\mathbb{A}\left({S}\right)
	\end{pmatrix} \right] \\
	&=r\left[\begin{pmatrix}
	\omega_\mathbb{A}\left({P}\right) & \omega_{\mathbb{A}}\begin{pmatrix}
	O&Q \\
	R&O
	\end{pmatrix}\\
	\omega_{\mathbb{A}}\begin{pmatrix}
	O&Q \\
	R&O
	\end{pmatrix} & \omega_\mathbb{A}\left({S}\right)
	\end{pmatrix}\right]\\
	&=\frac{1}{2}\bigg[\omega_A(P)+\omega_A(S)+\sqrt{(\omega_A(P)-\omega_A(S))^2+4\omega^2_{\mathbb{A}}\begin{pmatrix}
		O&Q \\
		R&O
		\end{pmatrix}}\bigg],
	\end{align*}
where the last equality follows from Lemma \ref{p4200}. So, by applying \eqref{qq2} we get
\begin{align*}
&\omega_{\mathbb{A}}\left[\begin{pmatrix}
P&Q \\
R&S
\end{pmatrix}\right]\\
&\leq \frac{1}{2}\bigg[\omega_A(P)+\omega_A(S)+\sqrt{(\omega_A(P)-\omega_A(S))^2+(\omega_A(Q+R)+\omega_A(Q-R))^2}\bigg].
\end{align*}
This proves the desired inequality. Moreover, it can be observed that
\begin{align*}
& \max\{\omega_A(P), \omega_A(S)\}+\frac{\omega_A(Q+R)+\omega_A(Q-R)}{2}\\
 &=\frac{\omega_A(P)+\omega_A(S)+|\omega_A(P)-\omega_A(S)|}{2} + \frac{\omega_A(Q+R)+ \omega_A(Q-R)}{2}\\
&\geq \frac{1}{2}\bigg(\omega_A(P)+\omega_A(S)+\sqrt{(\omega_A(P)-\omega_A(S))^2+(\omega_A(Q+R)+\omega_A(Q-R))^2}\bigg).
\end{align*}
Hence, the proof is complete.
\end{proof}

As a consequence of Theorem \ref{them100} we state the following corollary.
\begin{corollary}\label{cor100}
Let  $P, Q, R, S\in \mathcal{B}_{A}(\mathcal{H})$. Then,
\begin{align*}
r_A(PQ+RS)
&\leq \frac{1}{2}\big[\omega_A(QP)+\omega_A(SR)\big]\\
&+\frac{1}{2}\sqrt{\big[\omega_A(QP)-\omega_A(SR)\big]^2+\big[\omega_A(QR+SP)+\omega_A(QR-SP)\big]^2}.
\end{align*}
\end{corollary}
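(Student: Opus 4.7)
The plan is to realize $r_A(PQ+RS)$ as the $\mathbb{A}$-spectral radius of a $2\times 2$ operator matrix to which Theorem \ref{them100} can be applied. Setting
\[
\mathbb{X} = \begin{pmatrix} P & R \\ O & O \end{pmatrix}, \qquad \mathbb{Y} = \begin{pmatrix} Q & O \\ S & O \end{pmatrix},
\]
a direct block calculation gives
\[
\mathbb{X}\mathbb{Y} = \begin{pmatrix} PQ+RS & O \\ O & O \end{pmatrix}, \qquad \mathbb{Y}\mathbb{X} = \begin{pmatrix} QP & QR \\ SP & SR \end{pmatrix}.
\]
Since $\mathcal{B}_A(\mathcal{H})$ is an algebra, $\mathbb{X},\mathbb{Y}\in\mathcal{B}_{\mathbb{A}}(\mathcal{H}\oplus\mathcal{H})$.

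Next, I would use that $(\mathbb{X}\mathbb{Y})^n$ is block-diagonal with $(PQ+RS)^n$ in the upper-left corner, so Lemma \ref{lemma1}(ii) yields $\|(\mathbb{X}\mathbb{Y})^n\|_{\mathbb{A}} = \|(PQ+RS)^n\|_A$ for all $n\geq 1$. Passing to the limit in the definition \eqref{newrad} gives $r_{\mathbb{A}}(\mathbb{X}\mathbb{Y}) = r_A(PQ+RS)$. The commutativity property \eqref{commut} of the $\mathbb{A}$-spectral radius then yields
\[
r_A(PQ+RS) = r_{\mathbb{A}}(\mathbb{Y}\mathbb{X}).
\]

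To finish, I would invoke the inequality $r_{\mathbb{A}}(\mathbb{T})\leq \omega_{\mathbb{A}}(\mathbb{T})$ (valid for $\mathbb{A}$-bounded operators, as established in \cite{feki01}) and then apply Theorem \ref{them100} to $\mathbb{Y}\mathbb{X}$ with the substitutions $P\leftrightarrow QP$, $Q\leftrightarrow QR$, $R\leftrightarrow SP$, $S\leftrightarrow SR$. This produces precisely the right-hand side of the claimed bound.

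The only non-bookkeeping step is the inequality $r_{\mathbb{A}}\leq \omega_{\mathbb{A}}$; once this (essentially a Berger-type estimate for the $\mathbb{A}$-numerical radius in the form $r_{\mathbb{A}}(\mathbb{T})=\lim_n\|\mathbb{T}^n\|_{\mathbb{A}}^{1/n}\leq \omega_{\mathbb{A}}(\mathbb{T})$) is granted, the rest of the argument is a short chain of identifications. The main conceptual obstacle is recognizing the block factorization $\mathbb{X}\mathbb{Y},\mathbb{Y}\mathbb{X}$ that simultaneously produces $PQ+RS$ on one side and a matrix whose four entries are exactly $QP,QR,SP,SR$ on the other; everything else is forced by the structure of Theorem \ref{them100}.
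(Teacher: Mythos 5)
Your proposal is correct and follows essentially the same route as the paper: the same factorization $\begin{pmatrix} P & R \\ O & O \end{pmatrix}\begin{pmatrix} Q & O \\ S & O \end{pmatrix}$, the commutativity property \eqref{commut}, the bound $r_{\mathbb{A}}\leq\omega_{\mathbb{A}}$ (Lemma \ref{kkkk2020}), and finally Theorem \ref{them100} applied to $\begin{pmatrix} QP & QR \\ SP & SR \end{pmatrix}$. Your only addition is spelling out, via powers and Lemma \ref{lemma1}(ii), why $r_{\mathbb{A}}$ of the block-diagonal matrix equals $r_A(PQ+RS)$, a step the paper states without detail.
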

In order to prove Corollary \ref{cor100}, we need the following lemma.
\begin{lemma}(\cite{feki01})\label{kkkk2020}
If $T\in \mathcal{B}_{A^{1/2}}(\mathcal{H})$, then
\begin{equation*}
r_A(T)\leq \omega_A(T).
\end{equation*}
\end{lemma}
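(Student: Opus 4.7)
The plan is to establish $r_A(T)\leq \omega_A(T)$ by a Gelfand-formula argument that mirrors the classical proof of $r(T)\leq \omega(T)$.

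The argument rests on three ingredients already available in the preliminaries. First, since $\mathcal{B}_{A^{1/2}}(\mathcal{H})$ is a subalgebra of $\mathcal{B}(\mathcal{H})$, every iterate $T^n$ lies in $\mathcal{B}_{A^{1/2}}(\mathcal{H})$, so \eqref{refine1} applies and yields $\|T^n\|_A\leq 2\omega_A(T^n)$ for every $n\geq 1$. Second, a Berger-type power inequality $\omega_A(T^n)\leq \omega_A(T)^n$ for every positive integer $n$, which is the $A$-analogue of Berger's classical numerical-radius inequality. Third, the Gelfand-type formula $r_A(T)=\lim_{n\to\infty}\|T^n\|_A^{1/n}$ from \eqref{newrad}.

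Chaining these three facts produces the bound directly:
\begin{align*}
r_A(T)=\lim_{n\to\infty}\|T^n\|_A^{1/n}
\leq \lim_{n\to\infty}\bigl(2\,\omega_A(T^n)\bigr)^{1/n}
\leq \lim_{n\to\infty}2^{1/n}\,\omega_A(T)
=\omega_A(T).
\end{align*}

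The main obstacle is therefore the power inequality $\omega_A(T^n)\leq \omega_A(T)^n$. In the classical setting, Berger's theorem is proved via a Herglotz representation of the analytic function $z\mapsto \langle (I-zT)^{-1}x,x\rangle$ on a disk, combined with the fact that $\operatorname{Re}\langle (I-zT)^{-1}x,x\rangle\geq 0$ whenever $|z|\,\omega(T)\leq 1$. I would adapt this argument to the semi-inner-product setting in one of two ways: either (a) repeat the Herglotz analysis verbatim after replacing $\langle\cdot,\cdot\rangle$ by $\langle\cdot,\cdot\rangle_A$ and restricting to vectors with $\|x\|_A=1$, since the positivity used in the Herglotz step survives because $\langle\cdot,\cdot\rangle_A$ is a positive semidefinite sesquilinear form; or (b) pass to the quotient Hilbert space $\mathcal{H}/\ker A^{1/2}$, on which $T$ induces a bounded operator whose ordinary numerical radius equals $\omega_A(T)$, and invoke the classical Berger inequality there before pulling the estimate back. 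Once the power inequality is in hand, the chain of inequalities above finishes the proof.
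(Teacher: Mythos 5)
The paper does not actually prove this lemma; it is imported verbatim from \cite{feki01}, where the argument runs through the canonical Hilbert-space representation: every $T\in\mathcal{B}_{A^{1/2}}(\mathcal{H})$ induces a bounded operator $\widetilde{T}$ on the Hilbert space $\mathbf{R}(A^{1/2})$ (the completion of the quotient of $\mathcal{H}$ by $\mathcal{N}(A^{1/2})$) with $\|T^n\|_A=\|\widetilde{T}^n\|$ and $\omega_A(T)=\omega(\widetilde{T})$, whence $r_A(T)=r(\widetilde{T})\leq\omega(\widetilde{T})=\omega_A(T)$ by the classical inclusion of the spectrum in the closure of the numerical range. Your chain of inequalities $r_A(T)=\lim_n\|T^n\|_A^{1/n}\leq\lim_n(2\,\omega_A(T^n))^{1/n}\leq\omega_A(T)$ is valid as written, using \eqref{refine1} applied to $T^n$ and \eqref{newrad}; the entire weight of the proof therefore rests on the power inequality $\omega_A(T^n)\leq\omega_A(T)^n$, which you only sketch.

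That is where the gap lies. Your route (a), rerunning the Herglotz argument with $\langle\cdot,\cdot\rangle_A$, is not routine: the resolvent $(I-zT)^{-1}$ and the convergence of its Neumann series are controlled by the operator norm of $T$, not by $\|\cdot\|_A$, and the degeneracy of the semi-inner product breaks several steps of Berger's proof as stated, so ``verbatim'' replacement does not go through. Your route (b) is the right idea but needs two repairs: $\mathcal{H}/\ker A^{1/2}$ is only a pre-Hilbert space, so you must pass to its completion and check that $T$ descends (it does, since $A$-boundedness gives $T\mathcal{N}(A^{1/2})\subseteq\mathcal{N}(A^{1/2})$) and that the numerical radius of the extension is still $\omega_A(T)$ (true by density). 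But once you have built exactly this machinery, Berger's theorem becomes superfluous: the identifications $r_A(T)=r(\widetilde{T})$ and $\omega_A(T)=\omega(\widetilde{T})$ give the lemma in one line from the elementary classical fact $r\leq\omega$, which does not require the power inequality. So your argument can be completed, but the detour through Berger adds a substantial unproved ingredient only to arrive at a statement that the quotient construction yields directly.
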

Now we are in a position to establish Corollary \ref{cor100}.
\begin{proof}[Proof of Corollary \ref{cor100}]
It can be observed that
\begin{align*}
    r_A(PQ+RS)
    &=r_{\mathbb{A}}\left[\begin{pmatrix}
PQ+RS&O \\
O&O
\end{pmatrix}\right]\\
    &=r_{\mathbb{A}}\left[\begin{pmatrix}
P&R \\
O&O
\end{pmatrix}\begin{pmatrix}
Q&O \\
S&O
\end{pmatrix}\right]\\
&=r_{\mathbb{A}}\left[\begin{pmatrix}
Q&O \\
S&O
\end{pmatrix}\begin{pmatrix}
P&R \\
O&O
\end{pmatrix}\right]\quad (\text{by }\,\eqref{commut})\\
&=r_{\mathbb{A}}\left[\begin{pmatrix}
QP&QR \\
SP&SR
\end{pmatrix}\right]\\
&\leq \omega_{\mathbb{A}}\left[\begin{pmatrix}
QP&QR \\
SP&SR
\end{pmatrix}\right]\quad(\text{by Lemma } \ref{kkkk2020}).
\end{align*}
So, by applying  Theorem \ref{them100} we reach the required result.
\end{proof}
\begin{remark}
Recently the first author proved in \cite{feki02} that for $P, Q, R, S\in \mathcal{B}_{A}(\mathcal{H})$ it holds
\begin{align}\label{kk2020}
r_A(PQ+RS)
&\leq \frac{1}{2}\big[\omega_A(QP)+\omega_A(SR)\big]\nonumber\\
&+\frac{1}{2}\sqrt{\big[\omega_A(QP)-\omega_A(SR)\big]^2+4\left\Vert QR\right\Vert_A \left\Vert SP\right\Vert_A }.
\end{align}
If $QR=SP$, then it can be seen that the inequality in Corollary \ref{cor100} is sharper than \eqref{kk2020}.
\end{remark}

\begin{remark}
Notice that by letting $Q=S=I$ in Corollary \ref{cor100} we get
\begin{align*}
r_A(P+R)
&\leq \frac{1}{2}\big[\omega_A(P)+\omega_A(R)\big]\\
&+\frac{1}{2}\sqrt{(\omega_A(P)-\omega_A(R))^2+(\omega_A(P+R)+\omega_A(P-R))^2}.
\end{align*}
\end{remark}

To establish further upper bounds for the $\mathbb{A}$-numerical radius of the operator matrix $\begin{pmatrix}
P&Q \\
R&S
\end{pmatrix}$, we need the following lemmas.
\begin{lemma}\label{lmm05}(\cite{feki02})
Let $\mathbb{T}=\begin{pmatrix}
P&Q \\
R&S
\end{pmatrix}$ be such that $P, Q, R, S\in \mathcal{B}_{A^{1/2}}(\mathcal{H})$. Then, $\mathbb{T}\in \mathcal{B}_{\mathbb{A}^{1/2}}(\mathcal{H}\oplus \mathcal{H})$ and
$$\|\mathbb{T}\|_\mathbb{A}\leq \left\|\begin{pmatrix}
\|P\|_A & \|Q\|_A \\
\|R\|_A & \|S\|_A
\end{pmatrix}\right\|.$$
\end{lemma}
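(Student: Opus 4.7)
The plan is to reduce everything to a scalar $2 \times 2$ problem on $\mathbb{R}^2$ and then invoke the definition of the classical spectral norm. First, I would fix an arbitrary $x=(x_1,x_2)\in\mathcal{H}\oplus\mathcal{H}$ and directly compute
\[
\|\mathbb{T}x\|_{\mathbb{A}}^{2}=\|Px_{1}+Qx_{2}\|_{A}^{2}+\|Rx_{1}+Sx_{2}\|_{A}^{2},
\]
using the fact that $\mathbb{A}=\mathrm{diag}(A,A)$ forces the $\mathbb{A}$-seminorm on $\mathcal{H}\oplus\mathcal{H}$ to split as a direct sum of the component $A$-seminorms.

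Next, since $\|\cdot\|_{A}$ is a seminorm and each entry is $A$-bounded with $\|Tx_i\|_A\le \|T\|_A\|x_i\|_A$ for $T\in\{P,Q,R,S\}$, the triangle inequality gives
\[
\|Px_{1}+Qx_{2}\|_{A}\le \|P\|_{A}\|x_{1}\|_{A}+\|Q\|_{A}\|x_{2}\|_{A},
\]
and similarly for the second row. Writing $u=(\|x_{1}\|_{A},\|x_{2}\|_{A})^{T}\in\mathbb{R}_{\ge 0}^{2}$, so that $\|u\|_{2}^{2}=\|x\|_{\mathbb{A}}^{2}$, and setting $M=\begin{pmatrix}\|P\|_{A} & \|Q\|_{A}\\ \|R\|_{A} & \|S\|_{A}\end{pmatrix}$, the two coordinate bounds combine into $\|\mathbb{T}x\|_{\mathbb{A}}\le \|Mu\|_{2}$. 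Applying the definition of the classical spectral norm of $M$ yields $\|Mu\|_{2}\le \|M\|\cdot\|u\|_{2}=\|M\|\cdot\|x\|_{\mathbb{A}}$.

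Finally, this simultaneously shows that $\mathbb{T}$ is $\mathbb{A}^{1/2}$-adjointable (i.e. $\mathbb{A}$-bounded), hence $\mathbb{T}\in\mathcal{B}_{\mathbb{A}^{1/2}}(\mathcal{H}\oplus\mathcal{H})$, and gives the required bound $\|\mathbb{T}\|_{\mathbb{A}}\le\|M\|$ after taking the supremum over $x$ with $\|x\|_{\mathbb{A}}=1$, via the supremum characterization \eqref{semii}. There is essentially no serious obstacle here: the only thing to be a little careful about is that $\|\cdot\|_{A}$ is a genuine seminorm (possibly vanishing on $\mathcal{N}(A)$), but this is harmless because the triangle inequality and the estimate $\|Tx\|_A\le \|T\|_A \|x\|_A$ remain valid for seminorms, and the supremum defining $\|\mathbb{T}\|_{\mathbb{A}}$ restricts attention to unit $\mathbb{A}$-seminorm vectors.
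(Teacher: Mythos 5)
Your proof is correct. Note, however, that the paper itself offers no argument for this lemma: it is quoted verbatim from \cite{feki02}, so there is no in-paper proof to compare against, and your write-up serves as a self-contained elementary substitute. The chain you set up is sound: $\|\mathbb{T}x\|_{\mathbb{A}}^{2}=\|Px_{1}+Qx_{2}\|_{A}^{2}+\|Rx_{1}+Sx_{2}\|_{A}^{2}$ follows from $\mathbb{A}=\operatorname{diag}(A,A)$, the row-wise estimates give $\|\mathbb{T}x\|_{\mathbb{A}}\le\|Mu\|_{2}$ with $u=(\|x_{1}\|_{A},\|x_{2}\|_{A})^{T}$ and $\|u\|_{2}=\|x\|_{\mathbb{A}}$, and the spectral norm of the nonnegative matrix $M$ finishes the bound. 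The one point worth spelling out (which you correctly flag) is why $\|Tx_{i}\|_{A}\le\|T\|_{A}\|x_{i}\|_{A}$ holds for \emph{every} vector: for $x_{i}\in\mathcal{N}(A)$ the very definition of $A$-boundedness forces $\|Tx_{i}\|_{A}=0$, while for $\|x_{i}\|_{A}\neq0$ one normalizes and uses the supremum characterization \eqref{semii}. With that, the uniform estimate $\|\mathbb{T}x\|_{\mathbb{A}}\le\|M\|\,\|x\|_{\mathbb{A}}$ simultaneously shows that $\mathbb{T}$ is $\mathbb{A}$-bounded, hence lies in $\mathcal{B}_{\mathbb{A}^{1/2}}(\mathcal{H}\oplus\mathcal{H})$ by the Douglas-theorem description of that class recalled in Section \ref{s1}, and yields $\|\mathbb{T}\|_{\mathbb{A}}\le\|M\|$ upon taking the supremum over $\|x\|_{\mathbb{A}}=1$. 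As a bonus, your argument transfers verbatim to $d\times d$ operator matrices, which is the level of generality in which the cited source states the result.
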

\begin{lemma}\label{a5so}
Let $T,S\in \mathcal{B}_{A}(\mathcal{H})$. Then,
$$\|T^{\sharp_A}S\|_A=\|S^{\sharp_A}T\|_A.$$
\end{lemma}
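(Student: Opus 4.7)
The plan is to exploit the duality formula \eqref{newsemi} for $\|\cdot\|_A$ combined with the defining property of the $A$-adjoint, and then appeal to a simple symmetry.

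First I would rewrite the seminorm on both sides using \eqref{newsemi}, which gives
\[
\|T^{\sharp_A}S\|_A=\sup\bigl\{|\langle T^{\sharp_A}Sx,y\rangle_A|\,;\,\|x\|_A=\|y\|_A=1\bigr\}
\]
and the analogous expression for $\|S^{\sharp_A}T\|_A$. Next I would use the defining relation of $\sharp_A$, namely $\langle T^{\sharp_A}z,y\rangle_A=\langle z,Ty\rangle_A$, to move the $\sharp_A$ off the operator in each case:
\[
\langle T^{\sharp_A}Sx,y\rangle_A=\langle Sx,Ty\rangle_A,\qquad \langle S^{\sharp_A}Tx,y\rangle_A=\langle Tx,Sy\rangle_A.
\]

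Then I would observe the symmetry $|\langle Sx,Ty\rangle_A|=|\langle Ty,Sx\rangle_A|$ (absolute value of a conjugate). Relabeling $x\leftrightarrow y$ in the supremum (which is taken over the symmetric set $\{(x,y):\|x\|_A=\|y\|_A=1\}$) yields
\[
\sup_{\|x\|_A=\|y\|_A=1}|\langle Sx,Ty\rangle_A|=\sup_{\|x\|_A=\|y\|_A=1}|\langle Tx,Sy\rangle_A|,
\]
so $\|T^{\sharp_A}S\|_A=\|S^{\sharp_A}T\|_A$, as required.

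There is no real obstacle here; the only subtlety is that \eqref{newsemi} is stated for operators in $\mathcal{B}_{A^{1/2}}(\mathcal{H})$, but since $T,S\in\mathcal{B}_A(\mathcal{H})\subset\mathcal{B}_{A^{1/2}}(\mathcal{H})$ and this subalgebra is closed under composition, the products $T^{\sharp_A}S$ and $S^{\sharp_A}T$ lie in $\mathcal{B}_{A^{1/2}}(\mathcal{H})$, so the formula applies. As an alternative proof, one could instead write $\|T^{\sharp_A}S\|_A=\|(T^{\sharp_A}S)^{\sharp_A}\|_A$ via \eqref{diez}, expand $(T^{\sharp_A}S)^{\sharp_A}=S^{\sharp_A}(T^{\sharp_A})^{\sharp_A}=S^{\sharp_A}P_{\overline{\mathcal{R}(A)}}TP_{\overline{\mathcal{R}(A)}}$ by Lemma \ref{lemma1}(iii) and the identity $(T^{\sharp_A})^{\sharp_A}=P_{\overline{\mathcal{R}(A)}}TP_{\overline{\mathcal{R}(A)}}$, and then absorb the projections using $AP_{\overline{\mathcal{R}(A)}}=A$ and the fact that the supremum in \eqref{semii} may be restricted to $x\in\overline{\mathcal{R}(A)}$.
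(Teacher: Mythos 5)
Your main argument is correct and is a genuinely more direct route than the paper's. The paper proves the identity by first passing to the adjoint, writing $\|T^{\sharp_A}S\|_A=\|(T^{\sharp_A}S)^{\sharp_A}\|_A=\|S^{\sharp_A}P_{\overline{\mathcal{R}(A)}}TP_{\overline{\mathcal{R}(A)}}\|_A$ via \eqref{diez}, the product rule and $(T^{\sharp_A})^{\sharp_A}=P_{\overline{\mathcal{R}(A)}}TP_{\overline{\mathcal{R}(A)}}$, and then strips the projections inside the supremum of \eqref{newsemi} using $AP_{\overline{\mathcal{R}(A)}}=A$; this is exactly the alternative you sketch at the end. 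Your primary proof bypasses the double-adjoint and projection bookkeeping entirely: both sides are expressed by \eqref{newsemi}, the relation $\langle T^{\sharp_A}z,y\rangle_A=\langle z,Ty\rangle_A$ (the conjugate of the defining identity for $T^{\sharp_A}$) turns the two suprema into $\sup|\langle Sx,Ty\rangle_A|$ and $\sup|\langle Tx,Sy\rangle_A|$, and conjugate symmetry plus relabeling $x\leftrightarrow y$ over the symmetric constraint set identifies them. You also correctly note the only hypothesis check needed, namely that $T^{\sharp_A}S,\,S^{\sharp_A}T\in\mathcal{B}_{A}(\mathcal{H})\subset\mathcal{B}_{A^{1/2}}(\mathcal{H})$ so \eqref{newsemi} applies. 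What your route buys is brevity and transparency (no appeal to $A^\dagger$-type formulas or to $P_{\overline{\mathcal{R}(A)}}$ at all); what the paper's route buys is that it is the same projection-absorbing computation used elsewhere in the paper, so it reuses machinery the authors need anyway. Both are valid.
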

\begin{proof}
By using the fact that $P_{\overline{\mathcal{R}(A)}}A=AP_{\overline{\mathcal{R}(A)}}=A$ together with \eqref{newsemi}, we see that
\begin{align*}
\|T^{\sharp_A}S\|_A
&=\|S^{\sharp_A}P_{\overline{\mathcal{R}(A)}}TP_{\overline{\mathcal{R}(A)}}\|_A\\
&=\sup\left\{|\langle AP_{\overline{\mathcal{R}(A)}}x, (S^{\sharp_A}P_{\overline{\mathcal{R}(A)}}T)^{\sharp_A}y\rangle|\,;\;x,y\in \mathcal{H},\,\|x\|_{A}=\|y\|_{A}= 1\right\}\\
&=\sup\left\{|\langle S^{\sharp_A}P_{\overline{\mathcal{R}(A)}}Tx, y\rangle_A|\,;\;x,y\in \mathcal{H},\,\|x\|_{A}=\|y\|_{A}= 1\right\}\\
&=\sup\left\{|\langle AP_{\overline{\mathcal{R}(A)}}Tx, Sy\rangle|\,;\;x,y\in \mathcal{H},\,\|x\|_{A}=\|y\|_{A}= 1\right\}\\
&=\sup\left\{|\langle S^{\sharp_A}Tx, y\rangle_A|\,;\;x,y\in \mathcal{H},\,\|x\|_{A}=\|y\|_{A}= 1\right\}\\
&=\|S^{\sharp_A}T\|_A.
\end{align*}
This proves the desired result.
\end{proof}

Now, we are in a position to state the following theorem.
\begin{theorem}\label{theorem:upper 3}
Let $P,Q,R,S\in \mathcal{B}_{A}(\mathcal{H})$. Then,
\[\omega_\mathbb{A}\left[\begin{pmatrix}
P&Q \\
R&S
\end{pmatrix}\right]\leq \min\{\mu,\nu \},\]
where
\begin{align*}
\mu
 & =\frac{\sqrt{2}}{2}\sqrt{\|P\|_A^2+\|Q\|_A^2+\sqrt{(\|P\|_A^2-\|Q\|_A^2)^2 +4\|P^{\sharp_A}Q\|_A^2} } \\
 &\quad\quad\quad+\frac{\sqrt{2}}{2}\sqrt{\|R\|_A^2+\|S\|_A^2+\sqrt{(\|R\|_A^2-\|S\|_A^2)^2 +4\|S^{\sharp_A}R\|_A^2} },
\end{align*}
and
\begin{align*}
\nu
& =\frac{\sqrt{2}}{2}\sqrt{\|P\|_A^2+\|R\|_A^2+\sqrt{(\|P\|_A^2-\|R\|_A^2)^2+4\|PR^{\sharp_A}\|_A^2 }} \\
 &\quad\quad\quad+\frac{\sqrt{2}}{2}\sqrt{\|Q\|_A^2+\|S\|_A^2+\sqrt{(\|Q\|_A^2-\|S\|_A^2)^2 +4\|SQ^{\sharp_A}\|_A^2} }.
\end{align*}
\end{theorem}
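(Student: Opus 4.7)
The plan is to produce the two bounds $\mu$ and $\nu$ by two different block decompositions of $\mathbb{T}=\begin{pmatrix}P & Q \\ R & S\end{pmatrix}$, each treated by the same recipe: first pass from the $\mathbb{A}$-numerical radius to the $\mathbb{A}$-seminorm through the right half of \eqref{refine1}; then square using $\|\mathbb{T}\|_\mathbb{A}^2=\|\mathbb{T}^{\sharp_\mathbb{A}}\mathbb{T}\|_\mathbb{A}=\|\mathbb{T}\,\mathbb{T}^{\sharp_\mathbb{A}}\|_\mathbb{A}$ from \eqref{diez}; and finally dominate the resulting $2\times 2$ block operator by a scalar $2\times 2$ matrix using Lemma \ref{lmm05}.

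For the bound $\mu$ I would split by rows, $\mathbb{T}=\begin{pmatrix}P & Q \\ O & O\end{pmatrix}+\begin{pmatrix}O & O \\ R & S\end{pmatrix}$, and use the triangle inequality for $\omega_\mathbb{A}$. For the first summand $\mathbb{T}_1$, Lemma \ref{lemma1}(iii) gives $\mathbb{T}_1^{\sharp_\mathbb{A}}\mathbb{T}_1=\begin{pmatrix}P^{\sharp_A}P & P^{\sharp_A}Q \\ Q^{\sharp_A}P & Q^{\sharp_A}Q\end{pmatrix}$, and Lemma \ref{lmm05} together with \eqref{diez} and the identity $\|P^{\sharp_A}Q\|_A=\|Q^{\sharp_A}P\|_A$ from Lemma \ref{a5so} bounds $\|\mathbb{T}_1\|_\mathbb{A}^2$ by the operator norm of the scalar symmetric matrix $M_1=\begin{pmatrix}\|P\|_A^2 & \|P^{\sharp_A}Q\|_A \\ \|P^{\sharp_A}Q\|_A & \|Q\|_A^2\end{pmatrix}$. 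The Cauchy--Schwarz--type estimate $\|P^{\sharp_A}Q\|_A\leq\|P\|_A\|Q\|_A$ makes $M_1$ positive semidefinite, so $\|M_1\|$ equals its larger eigenvalue; extracting the square root gives exactly the first summand of $\mu$. The same treatment of the second row, whose off-diagonal becomes $\|S^{\sharp_A}R\|_A=\|R^{\sharp_A}S\|_A$ (again via Lemma \ref{a5so}), supplies the second summand of $\mu$.

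For $\nu$ I would instead split by columns, $\mathbb{T}=\begin{pmatrix}P & O \\ R & O\end{pmatrix}+\begin{pmatrix}O & Q \\ O & S\end{pmatrix}$, and use the dual identity $\|\mathbb{T}\|_\mathbb{A}^2=\|\mathbb{T}\,\mathbb{T}^{\sharp_\mathbb{A}}\|_\mathbb{A}$ from \eqref{diez}. The first column then produces the block $\begin{pmatrix}PP^{\sharp_A} & PR^{\sharp_A} \\ RP^{\sharp_A} & RR^{\sharp_A}\end{pmatrix}$, and the recipe above yields the first summand of $\nu$ with off-diagonal $\|PR^{\sharp_A}\|_A$; the second column produces the second summand with off-diagonal $\|SQ^{\sharp_A}\|_A$.

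The one point that will require extra care, and the only real obstacle I foresee, is the symmetrization $\|PR^{\sharp_A}\|_A=\|RP^{\sharp_A}\|_A$ (and its analogue for $Q,S$) needed in the $\nu$ case. This does not fall verbatim under Lemma \ref{a5so}, but I expect to obtain it by combining Lemma \ref{a5so} with $\|X\|_A=\|X^{\sharp_A}\|_A$ from \eqref{diez} and the anti-multiplicativity $(XY)^{\sharp_A}=Y^{\sharp_A}X^{\sharp_A}$: writing $\|PR^{\sharp_A}\|_A=\|(PR^{\sharp_A})^{\sharp_A}\|_A=\|(R^{\sharp_A})^{\sharp_A}P^{\sharp_A}\|_A$, then applying Lemma \ref{a5so} to $R^{\sharp_A},P^{\sharp_A}\in\mathcal{B}_A(\mathcal{H})$ and reversing the adjoint operation should deliver $\|RP^{\sharp_A}\|_A$. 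Once this bookkeeping is handled, the remainder is a routine $2\times 2$ eigenvalue computation.
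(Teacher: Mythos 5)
Your argument is correct. For the bound $\mu$ it is essentially the paper's own proof: the key estimate $\omega_\mathbb{A}\big[\begin{pmatrix} P&Q\\ O&O\end{pmatrix}\big]\leq \frac{\sqrt{2}}{2}\sqrt{\|P\|_A^2+\|Q\|_A^2+\sqrt{(\|P\|_A^2-\|Q\|_A^2)^2+4\|P^{\sharp_A}Q\|_A^2}}$ obtained from \eqref{refine1}, \eqref{diez}, Lemma \ref{lmm05} and Lemma \ref{a5so}, followed by the row splitting and the triangle inequality (the paper handles the lower-row block by conjugating with the $\mathbb{A}$-unitary $\begin{pmatrix} O&I\\ I&O\end{pmatrix}$ and reusing the one-row estimate, while you apply the recipe to $\begin{pmatrix} O&O\\ R&S\end{pmatrix}$ directly via $\mathbb{T}_2^{\sharp_\mathbb{A}}\mathbb{T}_2$; both give the same term). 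Where you genuinely deviate is the bound $\nu$: the paper simply notes $\omega_\mathbb{A}(\mathbb{T})=\omega_\mathbb{A}(\mathbb{T}^{\sharp_\mathbb{A}})$ and feeds $\mathbb{T}^{\sharp_\mathbb{A}}=\begin{pmatrix} P^{\sharp_A}&R^{\sharp_A}\\ Q^{\sharp_A}&S^{\sharp_A}\end{pmatrix}$ back into the $\mu$ bound, so Lemma \ref{a5so} applies verbatim and only the simplifications $\|X^{\sharp_A}\|_A=\|X\|_A$ and $\|(P^{\sharp_A})^{\sharp_A}R^{\sharp_A}\|_A=\|PR^{\sharp_A}\|_A$ are needed; you instead split by columns and use $\|\mathbb{T}_1\|_\mathbb{A}^2=\|\mathbb{T}_1\mathbb{T}_1^{\sharp_\mathbb{A}}\|_\mathbb{A}$, which forces you to prove the symmetrization $\|PR^{\sharp_A}\|_A=\|RP^{\sharp_A}\|_A$ that Lemma \ref{a5so} does not state directly. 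You identified this as the only delicate point, and your chain $\|PR^{\sharp_A}\|_A=\|(R^{\sharp_A})^{\sharp_A}P^{\sharp_A}\|_A=\|(P^{\sharp_A})^{\sharp_A}R^{\sharp_A}\|_A=\|RP^{\sharp_A}\|_A$, using \eqref{diez}, anti-multiplicativity of $\sharp_A$, and Lemma \ref{a5so} applied to $R^{\sharp_A},P^{\sharp_A}\in\mathcal{B}_A(\mathcal{H})$, is valid. Your route buys a more symmetric, self-contained treatment of the two column blocks; the paper's passage to $\mathbb{T}^{\sharp_\mathbb{A}}$ buys economy, recycling the $\mu$ bound at the cost of the (standard) fact that $\omega_\mathbb{A}$ is invariant under $\sharp_\mathbb{A}$.
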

\begin{proof}
We first prove that
\begin{equation}\label{ffirst}
\omega_\mathbb{A}\left[\begin{pmatrix}
P&Q \\
O&O
\end{pmatrix}\right]\leq \frac{\sqrt{2}}{2}\sqrt{\|P\|_A^2+\|Q\|_A^2+\sqrt{(\|P\|_A^2-\|Q\|_A^2)^2+4\|P^{\sharp_A}Q\|_A^2 }}\,.
\end{equation}
By using \eqref{refine1} together with \eqref{diez} we see that
\begin{align*}
 \omega_\mathbb{A}\left[\begin{pmatrix}
P&Q\\
    O& O
    \end{pmatrix}\right]
    &\leq \left\|\begin{pmatrix}
P&Q\\
    O & O
    \end{pmatrix}\right\|_\mathbb{A}\\
    &=\left\|\begin{pmatrix}
P&Q\\
    O& O
    \end{pmatrix}^{\sharp_\mathbb{A}}
    \begin{pmatrix}
P&Q\\
    O& O
    \end{pmatrix}\right\|_\mathbb{A}^{\frac{1}{2}}\\
    &=\left\|\begin{pmatrix}
    P^{\sharp_A}& O\\
    Q^{\sharp_A}& O
    \end{pmatrix}\begin{pmatrix}
    P&Q\\
    O& O
    \end{pmatrix}\right\|_\mathbb{A}^{\frac{1}{2}}\\
    &=\left\|\begin{pmatrix}
    P^{\sharp_A}P & P^{\sharp_A}Q\\
    Q^{\sharp_A}P& Q^{\sharp_A}Q
    \end{pmatrix}\right\|_\mathbb{A}^{\frac{1}{2}}.
\end{align*}
So, by using Lemma \ref{lmm05} together with Lemma \ref{a5so} we see that
\begin{align*}
 \omega_\mathbb{A}^2\left[\begin{pmatrix}
P&Q\\
    O& O
    \end{pmatrix}\right]
&\leq \left\|\begin{pmatrix}
    \|P^{\sharp_A}P\|_A & \|P^{\sharp_A}Q\|_A\\
    \|Q^{\sharp_A}P\|_A& \|Q^{\sharp_A}Q\|_A
    \end{pmatrix}\right\|\\
 &=\left\|\begin{pmatrix}
    \|P\|_A^2& \|P^{\sharp_A}Q\|_A\\
    \|P^{\sharp_A}Q\|_A& \|Q\|_A^2
    \end{pmatrix}\right\|\\
  &=r\left[\begin{pmatrix}
    \|P\|_A^2& \|P^{\sharp_A}Q\|_A\\
    \|P^{\sharp_A}Q\|_A& \|Q\|_A^2
    \end{pmatrix}\right]\\
  &= \frac{1}{2}\left(\|P\|_A^2+\|Q\|_A^2+\sqrt{(\|P\|_A^2-\|Q\|_A^2)^2+4\|P^{\sharp_A}Q\|_A^2}\right).
\end{align*}
This proves \eqref{ffirst}. Let $\mathbb{U}=\begin{pmatrix}
O&I \\
I&O
\end{pmatrix}.$ In view of Lemma \ref{lemma1} (iii) we have $\mathbb{U}\in \mathcal{B}_{\mathbb{A}}(\mathcal{H}\oplus \mathcal{H})$ and $\mathbb{U}^{\sharp_{\mathbb{A}}}=\begin{pmatrix}
O&P_{\overline{\mathcal{R}(A)}}\\
P_{\overline{\mathcal{R}(A)}}&O
\end{pmatrix}.$ Further, it can be seen that $\mathbb{U}$ is $\mathbb{A}$-unitary operator. So, by using Lemma \ref{weak} together with \eqref{ffirst} we get
\begin{align*}
\omega_{\mathbb{A}}\left[\begin{pmatrix}
P&Q \\
R&S
\end{pmatrix}\right]
& \leq\omega_{\mathbb{A}}\left[\begin{pmatrix}
P&Q\\
O &O
\end{pmatrix}\right]+\omega_{\mathbb{A}}\left[\begin{pmatrix}
O &O\\
R&S
\end{pmatrix}\right] \\
 &=\omega_{\mathbb{A}}\left[\begin{pmatrix}
P&Q\\
O &O
\end{pmatrix}\right]+\omega_{\mathbb{A}}\left[\mathbb{U}^{\sharp_{\mathbb{A}}}\begin{pmatrix}
O &O\\
R&S
\end{pmatrix}\mathbb{U}\right] \\
 &=\omega_{\mathbb{A}}\left[\begin{pmatrix}
P&Q\\
O &O
\end{pmatrix}\right]+\omega_{\mathbb{A}}\left[\begin{pmatrix}
P_{\overline{\mathcal{R}(A)}} &O\\
O &P_{\overline{\mathcal{R}(A)}}
\end{pmatrix}
\begin{pmatrix}
S &R\\
O &O
\end{pmatrix}\right]\\
 &=\omega_{\mathbb{A}}\left[\begin{pmatrix}
P&Q\\
O &O
\end{pmatrix}\right]+\omega_{\mathbb{A}}\left[
\begin{pmatrix}
S &R\\
O &O
\end{pmatrix}\right]\\
 & \leq \frac{\sqrt{2}}{2}\sqrt{\|P\|_A^2+\|Q\|_A^2+\sqrt{(\|P\|_A^2-\|Q\|_A^2)^2 +4\|P^{\sharp_A}Q\|_A^2} } \\
 &+\frac{\sqrt{2}}{2}\sqrt{\|R\|_A^2+\|S\|_A^2+\sqrt{(\|R\|_A^2-\|S\|_A^2)^2 +4\|S^{\sharp_A}R\|_A^2} }.
\end{align*}
Now, since $\omega_\mathbb{A}\left[\begin{pmatrix}
P&Q \\
R&S
\end{pmatrix}\right]=\omega_\mathbb{A}\left[\begin{pmatrix}
P^{\sharp_A} & R^{\sharp_A} \\
Q^{\sharp_A} & S^{\sharp_A}
\end{pmatrix}\right]$, then by using similar arguments as above we obtain
\begin{align*}
&\omega_\mathbb{A}\left[\begin{pmatrix}
P&Q \\
R&S
    \end{pmatrix}\right]\\
& \leq\frac{\sqrt{2}}{2}\sqrt{\|P^{\sharp_A}\|_A^2+\|R^{\sharp_A}\|_A^2+\sqrt{(\|P^{\sharp_A}\|_A^2-\|R^{\sharp_A}\|_A^2)^2+4\|(P^{\sharp_A})^{\sharp_A}R^{\sharp_A}\|_A^2 }} \\
 &+\frac{\sqrt{2}}{2}\sqrt{\|Q^{\sharp_A}\|_A^2+\|S^{\sharp_A}\|_A^2+\sqrt{(\|Q^{\sharp_A}\|_A^2-\|S^{\sharp_A}\|_A^2)^2 +4\|(S^{\sharp_A})^{\sharp_A}Q^{\sharp_A}\|_A^2} }\\
 & =\frac{\sqrt{2}}{2}\sqrt{\|P\|_A^2+\|R\|_A^2+\sqrt{(\|P\|_A^2-\|R\|_A^2)^2+4\|PR^{\sharp_A}\|_A^2 }} \\
 &+\frac{\sqrt{2}}{2}\sqrt{\|Q\|_A^2+\|S\|_A^2+\sqrt{(\|Q\|_A^2-\|S\|_A^2)^2 +4\|SQ^{\sharp_A}\|_A^2} }\,.
\end{align*}
Hence, the proof is complete.
\end{proof}

Our next result provides a lower bound for the $\mathbb{A}$-numerical radius of a $2 \times 2$ operator matrix such that its second row consists of zero operators. We mention here that our result improves a recent result proved by Rout et al. in \cite{rout} since $\mathcal{B}_{A}(\mathcal{H})\subseteq\mathcal{B}_{A^{1/2}}(\mathcal{H})$.
\begin{theorem}\label{Theorem 2.6}
	Let $P, Q\in\mathcal{B}_{A^{1/2}}(\mathcal{H})$. Then
\begin{equation}\label{sahoo1}
\omega_{\mathbb{A}}\left[\begin{pmatrix}
	P & Q\\
	O & O
	\end{pmatrix}\right]\geq \frac{1}{2}\max\{\alpha,\beta\},
\end{equation}
where $\alpha=\omega_A(P+ Q)+\omega_A(P- Q)$ and $\beta=\omega_A(P+ iQ)+\omega_A(P- iQ)$.
\end{theorem}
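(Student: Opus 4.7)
The plan is to apply the variational characterization
\[
\omega_{\mathbb{A}}(\mathbb{T}) = \sup_{\|z\|_{\mathbb{A}}=1}|\langle \mathbb{T}z,z\rangle_{\mathbb{A}}|
\]
to $\mathbb{T}=\begin{pmatrix} P & Q\\ O & O\end{pmatrix}$ using a well-chosen family of unit $\mathbb{A}$-vectors in $\mathcal{H}\oplus\mathcal{H}$. Because the second row of $\mathbb{T}$ vanishes, the quantity to maximise collapses to $|\langle Pu+Qv,u\rangle_{A}|$ over pairs $(u,v)$ with $\|u\|_{A}^{2}+\|v\|_{A}^{2}=1$, and the task reduces to tying the two coordinates of $z$ together in a way that makes both $\omega_A(P\pm Q)$ (respectively $\omega_A(P\pm iQ)$) appear.

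As a first step, for every unit $A$-norm $u\in\mathcal{H}$ and every unimodular $\lambda\in\mathbb{C}$, the test vector $z_{\lambda}=\tfrac{1}{\sqrt{2}}(u,\lambda u)$ satisfies $\|z_{\lambda}\|_{\mathbb{A}}=1$ and a direct computation gives
\[
\langle \mathbb{T}z_{\lambda},z_{\lambda}\rangle_{\mathbb{A}}=\tfrac{1}{2}\langle (P+\lambda Q)u,u\rangle_{A}.
\]
Taking the supremum over such $u$ and specialising $\lambda\in\{1,-1,i,-i\}$ yields four one-at-a-time estimates, namely $\omega_{\mathbb{A}}(\mathbb{T})\geq \tfrac{1}{2}\omega_{A}(P\pm Q)$ and $\omega_{\mathbb{A}}(\mathbb{T})\geq \tfrac{1}{2}\omega_{A}(P\pm iQ)$, one for each of the four summands appearing in $\alpha$ and $\beta$.

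To recover the sum form $\alpha=\omega_A(P+Q)+\omega_A(P-Q)$ on the right-hand side of \eqref{sahoo1}, I would couple the $\lambda=+1$ and $\lambda=-1$ estimates by means of a single mixed test vector. Fix $\varepsilon>0$ and choose unit $A$-norm $x,y$ with $\langle(P+Q)x,x\rangle_{A}\geq \omega_A(P+Q)-\varepsilon$ and $\langle(P-Q)y,y\rangle_{A}\geq \omega_A(P-Q)-\varepsilon$, both taken to be nonnegative reals after adjusting by unimodular phases. For a unimodular parameter $\mu$ consider $z=\tfrac{1}{2}(x+\mu y,\,x-\mu y)$, which satisfies $\|z\|_{\mathbb{A}}=1$ by a direct parallelogram computation. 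Expanding produces
\[
\langle \mathbb{T}z,z\rangle_{\mathbb{A}}=\tfrac{1}{4}\bigl[\langle(P+Q)x,x\rangle_A+\langle(P-Q)y,y\rangle_A+\bar{\mu}\langle(P+Q)x,y\rangle_A+\mu\langle(P-Q)y,x\rangle_A\bigr],
\]
whose diagonal part assembles the sum $\alpha$ up to $\varepsilon$. The $\beta$-inequality is then obtained by repeating the same argument after replacing $Q$ by $iQ$ (equivalently, $\mu$ by $i\mu$).

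The main obstacle is controlling the cross terms $\bar{\mu}\langle(P+Q)x,y\rangle_{A}$ and $\mu\langle(P-Q)y,x\rangle_{A}$, whose phases are not dictated by the extremality of $x$ or $y$. I would try to dispose of them either by optimising $\mu$ on the unit circle so that their joint contribution has nonnegative real part, or by rotating $y\mapsto e^{i\theta}y$ to produce an $A$-orthogonality relation that annihilates them altogether. Extracting exactly the coefficient $\tfrac{1}{2}$ of the theorem from this construction, as opposed to the weaker $\tfrac{1}{4}$ that follows trivially by averaging the four one-at-a-time estimates of the previous paragraph, is the crux of the argument; a careful phase-optimisation step is what must do the work.
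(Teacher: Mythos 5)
Your first step is correct and, in substance, it already delivers everything that the paper's own argument delivers. The test vectors $z_\lambda=\tfrac{1}{\sqrt{2}}(u,\lambda u)$ with $\|u\|_A=1$ and $|\lambda|=1$ give $\omega_{\mathbb{A}}(\mathbb{T})\geq \tfrac{1}{2}\omega_A(P+\lambda Q)$, hence
\[
\omega_{\mathbb{A}}(\mathbb{T})\geq \tfrac{1}{2}\max\{\omega_A(P+Q),\omega_A(P-Q)\}
\quad\text{and}\quad
\omega_{\mathbb{A}}(\mathbb{T})\geq \tfrac{1}{2}\max\{\omega_A(P+iQ),\omega_A(P-iQ)\}.
\]
These two estimates are exactly what the paper's proof establishes: there, the identity $\max\{\omega_A(P\pm Q)\}=\omega_{\mathbb{A}}\left[\begin{smallmatrix}P&Q\\ Q&P\end{smallmatrix}\right]$ from Lemma \ref{lem100} is combined with the triangle inequality and the invariance of $\omega_{\mathbb{A}}$ under conjugation by the $\mathbb{A}$-unitary $\left(\begin{smallmatrix}O&I\\ I&O\end{smallmatrix}\right)$ (Lemma \ref{weak}), and its final displayed estimates read $\max\{\omega_A(P+Q),\omega_A(P-Q)\}\leq 2\,\omega_{\mathbb{A}}(\mathbb{T})$ together with the analogue for $P\pm iQ$. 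Your direct test-vector route is more elementary (it needs neither lemma) and applies to $P,Q\in\mathcal{B}_{A^{1/2}}(\mathcal{H})$ just as the paper requires.

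The step you flag as the crux --- upgrading the maxima to the sums $\alpha$ and $\beta$ while keeping the factor $\tfrac12$ --- cannot be carried out, because the inequality with sums is false. Take $A=I$, $P=O$, $Q=I$: then $\omega_{\mathbb{A}}(\mathbb{T})=\sup\{|\langle y,x\rangle| : \|x\|^2+\|y\|^2=1\}=\tfrac12$, whereas $\tfrac12\alpha=\tfrac12\beta=1$. So no optimisation of $\mu$ or rotation of $y$ can control the cross terms in your mixed-vector expansion (note also that replacing $y$ by $e^{i\theta}y$ does not change $\langle(P-Q)y,y\rangle_A$, so you cannot even force both diagonal terms to be nonnegative simultaneously); the factor $\tfrac14$ for the sums, which follows from your first step by averaging, is sharp in this example. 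The discrepancy lies in the statement as printed rather than in your argument: the paper's proof only yields the inequality with $\alpha$ and $\beta$ understood as maxima, and that is precisely the version your first step proves completely, so the attempted strengthening should simply be dropped.
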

\begin{proof}
	Let $\mathbb{U}=\begin{pmatrix}
	O & I\\I & O
	\end{pmatrix}$. 	
	By using Lemma \ref{lemma1} (iii), we see that
	$$\mathbb{U}^{\sharp_\mathbb{A}}=\begin{pmatrix}
	P_{\overline{\mathcal{R}(A)}} &O\\
	O &P_{\overline{\mathcal{R}(A)}}
	\end{pmatrix}\mathbb{U}=\begin{pmatrix}
	O & P_{\overline{\mathcal{R}(A)}}\\
	P_{\overline{\mathcal{R}(A)}} & O
	\end{pmatrix}.$$
	So, by using the fact that $AP_{\overline{\mathcal{R}(A)}}=A$, we can verify that $\|\mathbb{U}x\|_{\mathbb{A}}=\|\mathbb{U}^{\sharp_{\mathbb{A}}}x\|_{\mathbb{A}}=\|x\|_{\mathbb{A}}$ for all $x\in \mathcal{H}\oplus \mathcal{H}$. Hence $\mathbb{U}$ is $\mathbb{A}$-unitary. So, using  Lemma  \ref{lem100} (i) we observes that
	\begin{align*}
	\max\{\omega_A(P+Q),\omega_A(P-Q)\}=&\omega_{\mathbb{A}}\left[\begin{pmatrix}
	P&Q\\
	Q &P
	\end{pmatrix}\right]\\
	& \leq\omega_{\mathbb{A}}\left[\begin{pmatrix}
	P&Q\\
	O &O
	\end{pmatrix}\right]+\omega_{\mathbb{A}}\left[\begin{pmatrix}
	O &O\\
	Q&P
	\end{pmatrix}\right] \\
	&=\omega_{\mathbb{A}}\left[\begin{pmatrix}
	P&Q\\
	O &O
	\end{pmatrix}\right]+\omega_{\mathbb{A}}\left[\mathbb{U}^{\sharp_{\mathbb{A}}}\begin{pmatrix}
	O &O\\
	Q&P
	\end{pmatrix}\mathbb{U}\right] \\
	&=\omega_{\mathbb{A}}\left[\begin{pmatrix}
	P&Q\\
	O &O
	\end{pmatrix}\right]+\omega_{\mathbb{A}}\left[\begin{pmatrix}
	P_{\overline{\mathcal{R}(A)}} &O\\
	O &P_{\overline{\mathcal{R}(A)}}
	\end{pmatrix}
	\begin{pmatrix}
	P &Q\\
	O &O
	\end{pmatrix}\right]\\
	&=\omega_{\mathbb{A}}\left[\begin{pmatrix}
	P&Q\\
	O &O
	\end{pmatrix}\right]+\omega_{\mathbb{A}}\left[
	\begin{pmatrix}
	P &Q\\
	O &O
	\end{pmatrix}\right]\\
	=&2\omega_{\mathbb{A}}\left[
	\begin{pmatrix}
	P &Q\\
	O &O
	\end{pmatrix}\right].
	\end{align*}

On the other hand, by considering the $\mathbb{A}$-unitary operator $\mathbb{V}=\begin{pmatrix}
	I & O\\
O & -I
	\end{pmatrix}$ and proceeding as above we see that
	\begin{align*}
	\max\{\omega_A(P+iQ),\omega_A(P-iQ)\}=&\omega_{\mathbb{A}}\left[\begin{pmatrix}
	P&-Q\\
	Q &P
	\end{pmatrix}\right]\quad(\text{by Lemma } \ref{lem100}\;(ii))\\
	&\leq 2\omega_{\mathbb{A}}\left[
	\begin{pmatrix}
	P &Q\\
	O &O
	\end{pmatrix}\right].
	\end{align*}
	Hence we get our desired result.
\end{proof}

Now, in order to prove a lower bound for $\omega_\mathbb{A}\left[\begin{pmatrix}
	P & Q\\
	O & O
	\end{pmatrix}\right]$, we need the following lemma.
\begin{lemma}(\cite{feki004})\label{n1}
Let $T, S\in\mathcal{B}_{A}(\mathcal{H})$. Then,
\begin{align}\label{SK1}
\omega_A(TS\pm ST^{\sharp_A}) \leq2\|T\|_A\,\omega_A(S).
\end{align}
\end{lemma}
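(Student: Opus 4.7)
The strategy is to lift the classical Hilbert-space inequality $\omega(TS\pm ST^*)\leq 2\|T\|\,\omega(S)$ to the $A$-setting. The natural bridge is Zamani's formula \eqref{zm}, which rewrites $\omega_A$ as a supremum of $\|\Re_A(e^{i\theta}\cdot)\|_A$ (or $\|\Im_A(e^{i\theta}\cdot)\|_A$). With it, the task reduces to controlling $\|\Re_A(e^{i\theta}(TS+ST^{\sharp_A}))\|_A$ for every $\theta\in\mathbb{R}$, and analogously $\|\Im_A(e^{i\theta}(TS-ST^{\sharp_A}))\|_A$ for the ``$-$'' case.

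Writing $X_\theta:=\Re_A(e^{i\theta}S)$ and expanding $\Re_A(e^{i\theta}(TS+ST^{\sharp_A}))$ via $(TS)^{\sharp_A}=S^{\sharp_A}T^{\sharp_A}$ and $(ST^{\sharp_A})^{\sharp_A}=(T^{\sharp_A})^{\sharp_A}S^{\sharp_A}$, together with a term-by-term expansion of $TX_\theta+X_\theta T^{\sharp_A}$, a direct comparison produces the identity
\[
\Re_A\bigl(e^{i\theta}(TS+ST^{\sharp_A})\bigr)=TX_\theta+X_\theta T^{\sharp_A}+\tfrac{1}{2}e^{-i\theta}\bigl((T^{\sharp_A})^{\sharp_A}-T\bigr)S^{\sharp_A}.
\]
Once the correction term is shown to vanish in the $\|\cdot\|_A$-seminorm, the remainder is routine: by the triangle inequality, the submultiplicativity of $\|\cdot\|_A$, and $\|T^{\sharp_A}\|_A=\|T\|_A$,
\[
\|\Re_A(e^{i\theta}(TS+ST^{\sharp_A}))\|_A=\|TX_\theta+X_\theta T^{\sharp_A}\|_A\leq 2\|T\|_A\|X_\theta\|_A,
\]
and \eqref{zm} applied to $S$ yields $\|X_\theta\|_A\leq \omega_A(S)$. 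Taking the supremum over $\theta$ and invoking \eqref{zm} on the left-hand side delivers the bound for $TS+ST^{\sharp_A}$. The ``$-$'' case runs identically with $\Im_A$ in place of $\Re_A$, producing the same factorization up to a multiplicative $1/i$ and the same correction term.

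The main obstacle is precisely that correction term: unlike when $A=I$, one has $(T^{\sharp_A})^{\sharp_A}=P_{\overline{\mathcal{R}(A)}}TP_{\overline{\mathcal{R}(A)}}\neq T$ in general, which blocks a mechanical translation of the classical argument. The way around it is to exploit that the reduced solution $S^{\sharp_A}$ has range contained in $\overline{\mathcal{R}(A)}$, so that $P_{\overline{\mathcal{R}(A)}}S^{\sharp_A}=S^{\sharp_A}$; combined with $AP_{\overline{\mathcal{R}(A)}}=A$, a brief manipulation yields $((T^{\sharp_A})^{\sharp_A}-T)S^{\sharp_A}=-P_{\mathcal{N}(A)}TS^{\sharp_A}$, and the identity $AP_{\mathcal{N}(A)}=O$ then forces $\|P_{\mathcal{N}(A)}TS^{\sharp_A}x\|_A=0$ for every $x\in\mathcal{H}$. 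This renders the correction $\|\cdot\|_A$-null and completes the argument.
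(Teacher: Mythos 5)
Your proof is correct, but note that this paper does not prove Lemma \ref{n1} at all: the lemma is quoted from \cite{feki004}, so there is no in-paper argument to compare against, and your proposal has to stand on its own --- which it does. The identity
\[
\Re_A\bigl(e^{i\theta}(TS+ST^{\sharp_A})\bigr)=T\,\Re_A(e^{i\theta}S)+\Re_A(e^{i\theta}S)\,T^{\sharp_A}+\tfrac{1}{2}e^{-i\theta}\bigl((T^{\sharp_A})^{\sharp_A}-T\bigr)S^{\sharp_A}
\]
follows by expanding both sides with $(TS)^{\sharp_A}=S^{\sharp_A}T^{\sharp_A}$ and $(ST^{\sharp_A})^{\sharp_A}=(T^{\sharp_A})^{\sharp_A}S^{\sharp_A}$, and your treatment of the correction term is sound: $(T^{\sharp_A})^{\sharp_A}=P_{\overline{\mathcal{R}(A)}}TP_{\overline{\mathcal{R}(A)}}$ together with $P_{\overline{\mathcal{R}(A)}}S^{\sharp_A}=S^{\sharp_A}$ gives $\bigl((T^{\sharp_A})^{\sharp_A}-T\bigr)S^{\sharp_A}=-P_{\mathcal{N}(A)}TS^{\sharp_A}$, which is ${\|\cdot\|}_A$-null because $\mathcal{N}(A)=\mathcal{N}(A^{1/2})$, so the operator seminorms of the two sides coincide. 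The remaining steps use only the submultiplicativity of ${\|\cdot\|}_A$ on $A$-bounded operators, $\|T^{\sharp_A}\|_A=\|T\|_A$ from \eqref{diez}, and \eqref{zm}, whose $\Im_A$-form (or equivalently your factorization up to a factor $1/i$) disposes of the minus sign exactly as you indicate; all objects involved remain in $\mathcal{B}_A(\mathcal{H})$, so \eqref{zm} applies. For context, the cited source and much of this literature obtain such commutator-type bounds either by this same $\sup_{\theta}\|\Re_A(e^{i\theta}\cdot)\|_A$ device or, more structurally, by transporting the classical inequality $\omega(TS\pm ST^{*})\leq 2\|T\|\,\omega(S)$ to the Hilbert space built from $\mathcal{R}(A^{1/2})$, where the induced operator $\widetilde{T}$ satisfies $\omega_A(T)=\omega(\widetilde{T})$, $\|T\|_A=\|\widetilde{T}\|$ and $\widetilde{T^{\sharp_A}}=(\widetilde{T})^{*}$; that route absorbs your correction term into the quotient construction, whereas your direct computation makes explicit why the failure of $(T^{\sharp_A})^{\sharp_A}=T$ is harmless here.
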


In the next result, Lemma \ref{lem100} enables us to present another application of the inequality \eqref{SK1}.
\begin{theorem}\label{app1}
	Let $P, Q\in\mathcal{B}_A(\mathcal{H})$. Then
\begin{equation}\label{sahoo3}
\omega_\mathbb{A}\left[\begin{pmatrix}
	P & Q\\
	O & O
	\end{pmatrix}\right]\geq \frac{1}{2}\max\left\{\omega_A(P+ iQ),\omega_A(P- iQ)\right\}.
\end{equation}
\end{theorem}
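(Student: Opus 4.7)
The plan is to apply the inequality \eqref{SK1} from Lemma~\ref{n1} to the block operators
\[\mathbb{S}=\begin{pmatrix}P & Q\\ O & O\end{pmatrix}\qquad\text{and}\qquad\mathbb{T}=\begin{pmatrix}O & -iI\\ iI & O\end{pmatrix}\]
acting on $\mathcal{H}\oplus\mathcal{H}$. First I would verify that $\mathbb{T}$ is $\mathbb{A}$-isometric: a direct computation gives $\left\|\mathbb{T}\begin{pmatrix}x\\ y\end{pmatrix}\right\|_{\mathbb{A}}^{2}=\|-iy\|_{A}^{2}+\|ix\|_{A}^{2}=\left\|\begin{pmatrix}x\\ y\end{pmatrix}\right\|_{\mathbb{A}}^{2}$, so in particular $\|\mathbb{T}\|_{\mathbb{A}}=1$. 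Applying Lemma~\ref{lemma1}(iii) together with $(iI)^{\sharp_{A}}=-iP_{\overline{\mathcal{R}(A)}}$, one obtains
\[\mathbb{T}^{\sharp_{\mathbb{A}}}=\begin{pmatrix}O & -iP_{\overline{\mathcal{R}(A)}}\\ iP_{\overline{\mathcal{R}(A)}} & O\end{pmatrix}.\]

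A direct block multiplication then produces
\[\mathbb{T}\mathbb{S}+\mathbb{S}\mathbb{T}^{\sharp_{\mathbb{A}}}=\begin{pmatrix}iQP_{\overline{\mathcal{R}(A)}} & -iPP_{\overline{\mathcal{R}(A)}}\\ iP & iQ\end{pmatrix}.\]
The main bookkeeping obstacle is removing the $P_{\overline{\mathcal{R}(A)}}$ factors in the first row. For this I would invoke the standard observation, implicit throughout the paper, that for any $X\in\mathcal{B}_{A}(\mathcal{H})$ the difference $X-XP_{\overline{\mathcal{R}(A)}}=XP_{\mathcal{N}(A)}$ has vanishing $A$-seminorm (since $\|XP_{\mathcal{N}(A)}x\|_{A}\le\|X\|_{A}\|P_{\mathcal{N}(A)}x\|_{A}=0$). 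Consequently the above matrix and $\begin{pmatrix}iQ & -iP\\ iP & iQ\end{pmatrix}$ differ by an operator of zero $\mathbb{A}$-seminorm, and therefore share the same $\mathbb{A}$-numerical radius.

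Pulling the scalar $i$ out of the numerical radius and applying Lemma~\ref{lem100}(ii) with the substitutions $T\leftarrow Q$, $S\leftarrow P$ gives
\[\omega_{\mathbb{A}}\!\left[\begin{pmatrix}iQ & -iP\\ iP & iQ\end{pmatrix}\right]=\omega_{\mathbb{A}}\!\left[\begin{pmatrix}Q & -P\\ P & Q\end{pmatrix}\right]=\max\{\omega_{A}(Q+iP),\,\omega_{A}(Q-iP)\}.\]
The elementary identities $Q+iP=i(P-iQ)$ and $Q-iP=-i(P+iQ)$, combined with the rotation invariance $\omega_{A}(\lambda X)=|\lambda|\,\omega_{A}(X)$, rewrite this maximum as $\max\{\omega_{A}(P+iQ),\,\omega_{A}(P-iQ)\}$.

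Finally, I would invoke Lemma~\ref{n1} for $\mathbb{T},\mathbb{S}\in\mathcal{B}_{\mathbb{A}}(\mathcal{H}\oplus\mathcal{H})$ to conclude
\[\max\{\omega_{A}(P+iQ),\,\omega_{A}(P-iQ)\}=\omega_{\mathbb{A}}\!\left(\mathbb{T}\mathbb{S}+\mathbb{S}\mathbb{T}^{\sharp_{\mathbb{A}}}\right)\le 2\|\mathbb{T}\|_{\mathbb{A}}\,\omega_{\mathbb{A}}(\mathbb{S})=2\,\omega_{\mathbb{A}}\!\left[\begin{pmatrix}P & Q\\ O & O\end{pmatrix}\right],\]
and a division by $2$ yields the desired bound \eqref{sahoo3}. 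Apart from the $P_{\overline{\mathcal{R}(A)}}$ bookkeeping, every step reduces to a block multiplication or a direct appeal to a lemma already recorded in the paper; the conceptual content of the proof lies entirely in choosing the above $\mathbb{T}$, after which Lemma~\ref{lem100}(ii) forces the right-hand side of \eqref{SK1} into the required form.
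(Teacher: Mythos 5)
Your proposal is correct and takes essentially the same route as the paper: the paper's proof likewise applies inequality \eqref{SK1} with an off-diagonal $\mathbb{A}$-unitary weight (there $\mathbb{U}=\begin{pmatrix} O & -I\\ I & O\end{pmatrix}$, acting on the $\mathbb{A}$-adjoints $\mathbb{U}^{\sharp_\mathbb{A}},\mathbb{T}^{\sharp_\mathbb{A}}$) to produce the matrix $\begin{pmatrix} Q & -P\\ P & Q\end{pmatrix}$ up to an $\mathbb{A}$-adjoint, and then concludes with Lemma \ref{lem100}(ii). Your only deviations --- the extra factor $i$ in the weight, applying Lemma \ref{n1} to the operators rather than to their adjoints, and removing the $P_{\overline{\mathcal{R}(A)}}$ factors via the zero-$A$-seminorm observation instead of the identity $P_{\overline{\mathcal{R}(A)}}X^{\sharp_A}=X^{\sharp_A}P_{\overline{\mathcal{R}(A)}}=X^{\sharp_A}$ --- are cosmetic and all steps check out.
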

\begin{proof}
Let $\mathbb{T}=\begin{pmatrix}
	P & Q\\
	O & O
	\end{pmatrix}$ and $\mathbb{U}=\begin{pmatrix}
	O & -I\\I & O
	\end{pmatrix}$. It is not difficult to verify that $\mathbb{U}$ is $\mathbb{A}$-unitary. Moreover, since $P_{\overline{\mathcal{R}(A)}}X^{\sharp_A}=X^{\sharp_A}P_{\overline{\mathcal{R}(A)}}=X^{\sharp_A}$ for all $X\in\mathcal{B}_A(\mathcal{H})$ (see \cite{faiot}), then it can be verified that
$$\mathbb{U}^{\sharp_{\mathbb{A}}}\mathbb{T}^{\sharp_{\mathbb{A}}}+\mathbb{T}^{\sharp_{\mathbb{A}}}(\mathbb{U}^{\sharp_{\mathbb{A}}})^{\sharp_{\mathbb{A}}}=\begin{pmatrix}
	Q^{\sharp_A} & P^{\sharp_A}\\
	-P^{\sharp_A} & Q^{\sharp_A}
	\end{pmatrix} =\begin{pmatrix}
	Q & -P\\
	P & Q
	\end{pmatrix}^{\sharp_{\mathbb{A}}}.$$
By using Lemma \ref{n1} we see that
$$\omega_{\mathbb{A}}(\mathbb{U}^{\sharp_{\mathbb{A}}}\mathbb{T}^{\sharp_{\mathbb{A}}}+\mathbb{T}^{\sharp_{\mathbb{A}}}(\mathbb{U}^{\sharp_{\mathbb{A}}})^{\sharp_{\mathbb{A}}})
\leq 2\omega_{\mathbb{A}}(\mathbb{T}^{\sharp_{\mathbb{A}}}),$$
which, in turn, implies that
\begin{align*}
\omega_{\mathbb{A}}\left[\begin{pmatrix}
	P & Q\\
	O & O
	\end{pmatrix} \right]
&\geq \frac{1}{2}\omega_{\mathbb{A}}\left[\begin{pmatrix}
	Q & -P\\
	P & Q
	\end{pmatrix}^{\sharp_{\mathbb{A}}} \right] \\
&=\frac{1}{2}\omega_{\mathbb{A}}\left[\begin{pmatrix}
	Q & -P\\
	P & Q
	\end{pmatrix}\right] \\
 &=\frac{1}{2}\max\left\{\omega_A(P+ iQ),\omega_A(P- iQ)\right\},
\end{align*}
where the last equality follows by applying Lemma \ref{lem100}(ii).
\end{proof}

As an application of the above theorem, we can derive the following $A$-numerical radius inequality.
\begin{theorem}\label{f12}
Let $T\in \mathcal{B}_A(\mathcal{H})$. Then
$$\omega_A(T)\leq 2\min\left\{\omega_\mathbb{A}\left[\begin{pmatrix}
	\Re_A(T) & O\\
	\Im_A(T) & O
	\end{pmatrix}\right], \omega_\mathbb{A}\left[\begin{pmatrix}
	O & -i\Im_A(T)\\
	\Re_A(T) & O
	\end{pmatrix}\right] \right\}. $$
\end{theorem}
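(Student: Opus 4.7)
The plan is to derive each of the two inequalities from Theorem \ref{app1} (or, for the second, from the same Lemma \ref{n1}-based technique used in its proof). Two ingredients are used throughout: by Lemma \ref{lemma1}(iii) one has $\omega_\mathbb{A}(\mathbb{X})=\omega_\mathbb{A}(\mathbb{X}^{\sharp_\mathbb{A}})$ for any $2\times 2$ block matrix $\mathbb{X}$ with entries in $\mathcal{B}_A(\mathcal{H})$; and since $(T^{\sharp_A})^{\sharp_A}=P_{\overline{\mathcal{R}(A)}}TP_{\overline{\mathcal{R}(A)}}$ with $AP_{\overline{\mathcal{R}(A)}}=A$, a short verification yields $\omega_A((T^{\sharp_A})^{\sharp_A})=\omega_A(T^{\sharp_A})=\omega_A(T)$.

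For the first bound, I would take the $\mathbb{A}$-adjoint of $\mathbb{S}_1:=\begin{pmatrix}\Re_A(T)&O\\ \Im_A(T)&O\end{pmatrix}$, obtaining by Lemma \ref{lemma1}(iii) the matrix $\begin{pmatrix}\Re_A(T)^{\sharp_A}&\Im_A(T)^{\sharp_A}\\ O&O\end{pmatrix}$, to which Theorem \ref{app1} directly applies. The crucial algebraic step is the pair of identities
\[
\Re_A(T)^{\sharp_A}+i\,\Im_A(T)^{\sharp_A}=(T^{\sharp_A})^{\sharp_A},\qquad \Re_A(T)^{\sharp_A}-i\,\Im_A(T)^{\sharp_A}=T^{\sharp_A},
\]
which follow by expanding $\Re_A,\Im_A$ in terms of $T,T^{\sharp_A}$ and using the antilinearity $(\lambda X)^{\sharp_A}=\overline{\lambda}\,X^{\sharp_A}$. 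Theorem \ref{app1} with $P=\Re_A(T)^{\sharp_A}$ and $Q=\Im_A(T)^{\sharp_A}$, combined with $\omega_A((T^{\sharp_A})^{\sharp_A})=\omega_A(T^{\sharp_A})=\omega_A(T)$, then delivers $\omega_\mathbb{A}(\mathbb{S}_1)\geq \omega_A(T)/2$.

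For the second bound, I would replay the proof of Theorem \ref{app1} with $\mathbb{S}_2:=\begin{pmatrix} O&-i\Im_A(T)\\ \Re_A(T)&O\end{pmatrix}$ in place of $\begin{pmatrix} P&Q\\O&O\end{pmatrix}$. Keeping the same $\mathbb{A}$-unitary $\mathbb{U}=\begin{pmatrix} O&-I\\ I&O\end{pmatrix}$ and applying Lemma \ref{n1} to $(\mathbb{U}^{\sharp_\mathbb{A}},\mathbb{S}_2^{\sharp_\mathbb{A}})$ yields
\[
\omega_\mathbb{A}\!\bigl(\mathbb{U}^{\sharp_\mathbb{A}}\mathbb{S}_2^{\sharp_\mathbb{A}}+\mathbb{S}_2^{\sharp_\mathbb{A}}(\mathbb{U}^{\sharp_\mathbb{A}})^{\sharp_\mathbb{A}}\bigr)\leq 2\omega_\mathbb{A}(\mathbb{S}_2).
\]
A direct block-matrix multiplication, absorbing the projections $P_{\overline{\mathcal{R}(A)}}$ appearing in $\mathbb{U}^{\sharp_\mathbb{A}}$ and $(\mathbb{U}^{\sharp_\mathbb{A}})^{\sharp_\mathbb{A}}$ via $P_{\overline{\mathcal{R}(A)}}X^{\sharp_A}=X^{\sharp_A}P_{\overline{\mathcal{R}(A)}}=X^{\sharp_A}$, collapses the left-hand sum to $\begin{pmatrix}(T^{\sharp_A})^{\sharp_A}&O\\ O&-(T^{\sharp_A})^{\sharp_A}\end{pmatrix}$, whose $\mathbb{A}$-numerical radius equals $\omega_A(T)$ by Lemma \ref{lemma1}(i).

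The main obstacle I expect is the careful $\sharp_A$-bookkeeping: the antilinearity of $\sharp_A$ flips the sign of the $i$ in $-i\Im_A(T)$ (so $(-i\Im_A(T))^{\sharp_A}=i\,\Im_A(T)^{\sharp_A}$), and the projection factors introduced by passing to $\sharp_\mathbb{A}$-adjoints of $\mathbb{U}$ must be correctly absorbed into the neighbouring $\sharp_A$-operators before the block-diagonal structure of the resulting matrix emerges.
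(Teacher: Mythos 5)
Your proof is correct, and it splits naturally into two halves of different character. For the first bound you do exactly what the paper does: pass to the $\sharp_\mathbb{A}$-adjoint of $\begin{pmatrix}\Re_A(T)&O\\ \Im_A(T)&O\end{pmatrix}$, apply Theorem \ref{app1} with $P=[\Re_A(T)]^{\sharp_A}$, $Q=[\Im_A(T)]^{\sharp_A}$, and use the identities $[\Re_A(T)]^{\sharp_A}+i[\Im_A(T)]^{\sharp_A}=(T^{\sharp_A})^{\sharp_A}$, $[\Re_A(T)]^{\sharp_A}-i[\Im_A(T)]^{\sharp_A}=T^{\sharp_A}$ together with $\omega_A(X^{\sharp_A})=\omega_A(X)$. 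For the second bound you genuinely diverge: the paper invokes Lemma \ref{lemf} (proved there by the compression trick $\mathbb{X}\mathbb{Y}\mathbb{X}^{\sharp_\mathbb{A}}$ and \cite[Lemma 4.4]{zamani1}) applied to the adjoint matrix $\begin{pmatrix}O&[\Re_A(T)]^{\sharp_A}\\ i[\Im_A(T)]^{\sharp_A}&O\end{pmatrix}$, whereas you rerun the Lemma \ref{n1} commutator argument of Theorem \ref{app1} directly on $\mathbb{S}_2$ with $\mathbb{U}=\begin{pmatrix}O&-I\\ I&O\end{pmatrix}$. Your block computation checks out: after absorbing the projections via $P_{\overline{\mathcal{R}(A)}}X^{\sharp_A}=X^{\sharp_A}P_{\overline{\mathcal{R}(A)}}=X^{\sharp_A}$ and using $(-i\Im_A(T))^{\sharp_A}=i[\Im_A(T)]^{\sharp_A}$, the sum $\mathbb{U}^{\sharp_\mathbb{A}}\mathbb{S}_2^{\sharp_\mathbb{A}}+\mathbb{S}_2^{\sharp_\mathbb{A}}(\mathbb{U}^{\sharp_\mathbb{A}})^{\sharp_\mathbb{A}}$ indeed collapses to $\begin{pmatrix}(T^{\sharp_A})^{\sharp_A}&O\\ O&-(T^{\sharp_A})^{\sharp_A}\end{pmatrix}$, and Lemma \ref{lemma1}(i) with $\omega_A\big((T^{\sharp_A})^{\sharp_A}\big)=\omega_A(T)$ finishes the argument. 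What your route buys is economy: the whole theorem then rests on the single Lemma \ref{n1} mechanism and Lemma \ref{lemf} is not needed; what the paper's route buys is that Lemma \ref{lemf} is isolated as a reusable statement of independent interest. Two small bookkeeping points: the equality $\omega_\mathbb{A}(\mathbb{X}^{\sharp_\mathbb{A}})=\omega_\mathbb{A}(\mathbb{X})$ is not literally Lemma \ref{lemma1}(iii) (that lemma only computes the adjoint); it follows from $\langle \mathbb{X}^{\sharp_\mathbb{A}}x,x\rangle_\mathbb{A}=\overline{\langle \mathbb{X}x,x\rangle_\mathbb{A}}$, which is also how $\omega_A\big((T^{\sharp_A})^{\sharp_A}\big)=\omega_A(T^{\sharp_A})=\omega_A(T)$ is justified; and in the Lemma \ref{n1} step you should record $\|\mathbb{U}^{\sharp_\mathbb{A}}\|_\mathbb{A}=1$ (from $\mathbb{A}$-unitarity) before discarding that factor.
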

To prove Theorem \ref{f12}, we need the following lemma.
\begin{lemma}\label{lemf}
Let $T,S\in \mathcal{B}_A(\mathcal{H})$. Then
\begin{align}
 \omega_A(T\pm iS)\leq 2\,\omega_{\mathbb{A}}\left[\begin{pmatrix}
O&T\\
iS &O
\end{pmatrix}\right].
\end{align}
\end{lemma}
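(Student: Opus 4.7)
The plan is to extract both inequalities $\omega_A(T+iS)\leq 2\omega_\mathbb{A}(\mathbb{X})$ and $\omega_A(T-iS)\leq 2\omega_\mathbb{A}(\mathbb{X})$ by a single application of the commutator-type inequality of Lemma \ref{n1} to the operator matrix $\mathbb{X}=\begin{pmatrix}O&T\\iS&O\end{pmatrix}$, coupled with a cleverly chosen $\mathbb{A}$-unitary swap $\mathbb{U}=\begin{pmatrix}O&I\\I&O\end{pmatrix}$. The latter matrix has already been shown to be $\mathbb{A}$-unitary in the proof of Theorem \ref{Theorem 2.6}, so in particular $\|\mathbb{U}\|_\mathbb{A}=1$, and Lemma \ref{lemma1}(iii) gives $\mathbb{U}^{\sharp_\mathbb{A}}=\begin{pmatrix}O&P_{\overline{\mathcal{R}(A)}}\\P_{\overline{\mathcal{R}(A)}}&O\end{pmatrix}$.

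First I would compute the products
\begin{equation*}
\mathbb{U}\mathbb{X}=\begin{pmatrix}iS&O\\O&T\end{pmatrix},\qquad \mathbb{X}\mathbb{U}^{\sharp_\mathbb{A}}=\begin{pmatrix}TP_{\overline{\mathcal{R}(A)}}&O\\O&iSP_{\overline{\mathcal{R}(A)}}\end{pmatrix},
\end{equation*}
so that $\mathbb{U}\mathbb{X}\pm\mathbb{X}\mathbb{U}^{\sharp_\mathbb{A}}$ is block-diagonal with entries $iS\pm TP_{\overline{\mathcal{R}(A)}}$ and $T\pm iSP_{\overline{\mathcal{R}(A)}}$. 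Applying Lemma \ref{n1} (with $T\rightsquigarrow\mathbb{U}$ and $S\rightsquigarrow\mathbb{X}$) together with Lemma \ref{lemma1}(i) then yields
\begin{equation*}
\max\bigl\{\omega_A(iS\pm TP_{\overline{\mathcal{R}(A)}}),\,\omega_A(T\pm iSP_{\overline{\mathcal{R}(A)}})\bigr\}\,=\,\omega_\mathbb{A}(\mathbb{U}\mathbb{X}\pm\mathbb{X}\mathbb{U}^{\sharp_\mathbb{A}})\,\leq\,2\omega_\mathbb{A}(\mathbb{X}).
\end{equation*}

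The one technical point, and the only mild obstacle, is justifying that the extra projection $P_{\overline{\mathcal{R}(A)}}$ on the right can be dropped under $\omega_A(\cdot)$. For this I would note that from the identity $AT=T^{\sharp_A*}A$ (a direct consequence of $AT^{\sharp_A}=T^*A$), one gets $ATP_{\overline{\mathcal{R}(A)}}=T^{\sharp_A*}AP_{\overline{\mathcal{R}(A)}}=T^{\sharp_A*}A=AT$, hence $\langle TP_{\overline{\mathcal{R}(A)}}x,x\rangle_A=\langle Tx,x\rangle_A$ for every $x\in\mathcal{H}$, and the same equality holds with $T$ replaced by $S$; consequently $\omega_A(T\pm iSP_{\overline{\mathcal{R}(A)}})=\omega_A(T\pm iS)$ and $\omega_A(iS\pm TP_{\overline{\mathcal{R}(A)}})=\omega_A(\pm T+iS)=\omega_A(T\mp iS)$. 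Feeding these back into the displayed inequality, and remembering that $\omega_A(-X)=\omega_A(X)$, both sign choices collapse to $\omega_A(T\pm iS)\leq 2\omega_\mathbb{A}(\mathbb{X})$, completing the proof.
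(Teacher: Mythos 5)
Your proof is correct, but it follows a genuinely different route from the paper's. The paper proves Lemma \ref{lemf} by a compression argument: with $\mathbb{X}=\begin{pmatrix} I & I\\ O & O\end{pmatrix}$ and $\mathbb{Y}=\begin{pmatrix} O & T\\ S & O\end{pmatrix}$ one computes $\mathbb{X}\mathbb{Y}\mathbb{X}^{\sharp_\mathbb{A}}=\begin{pmatrix} TP_{\overline{\mathcal{R}(A)}}+SP_{\overline{\mathcal{R}(A)}} & O\\ O & O\end{pmatrix}$ and $\|\mathbb{X}\|_\mathbb{A}^2=2$, then invokes Zamani's inequality $\omega_\mathbb{A}(\mathbb{X}\mathbb{Y}\mathbb{X}^{\sharp_\mathbb{A}})\le\|\mathbb{X}\|_\mathbb{A}^2\,\omega_\mathbb{A}(\mathbb{Y})$ to get $\omega_A(T+S)\le 2\,\omega_\mathbb{A}(\mathbb{Y})$, handles the minus sign by replacing $S$ with $-S$ (the off-diagonal matrix has the same $\mathbb{A}$-numerical radius), and finally substitutes $iS$ for $S$. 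You instead apply the inequality \eqref{SK1} of Lemma \ref{n1} at the level of $\mathbb{A}$ with the $\mathbb{A}$-unitary swap $\mathbb{U}$, identify $\mathbb{U}\mathbb{X}\pm\mathbb{X}\mathbb{U}^{\sharp_\mathbb{A}}$ as a block-diagonal operator, and use Lemma \ref{lemma1}(i); your justification for discarding $P_{\overline{\mathcal{R}(A)}}$, via $AT=(T^{\sharp_A})^*A$ and $AP_{\overline{\mathcal{R}(A)}}=A$, is sound and genuinely uses the hypothesis $T,S\in\mathcal{B}_A(\mathcal{H})$. This is precisely the technique the paper employs later (Theorem \ref{app1} and the last theorem), so your argument is stylistically consistent with the paper even though it differs from its own proof of this lemma; it yields both signs from the single $\pm$ in \eqref{SK1} and avoids Zamani's Lemma 4.4, whereas the paper's compression proof is slightly lighter on hypotheses, since $ATP_{\overline{\mathcal{R}(A)}}=AT$ already holds for every $A$-bounded $T$, so that route would work with $T,S\in\mathcal{B}_{A^{1/2}}(\mathcal{H})$. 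One cosmetic slip: $\omega_A(\pm T+iS)=\omega_A(T\pm iS)$, not $\omega_A(T\mp iS)$; since within each sign choice the two diagonal blocks have the same $A$-numerical radius, this does not affect your conclusion.
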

\begin{proof}
Let $\mathbb{X}=\begin{pmatrix}
I &I\\
O &O
\end{pmatrix}$ and $\mathbb{Y}=\begin{pmatrix}
O &T\\
S &O
\end{pmatrix}$. It can be observed that
$$\mathbb{X}\mathbb{Y}\mathbb{X}^{\sharp_\mathbb{A}}=\begin{pmatrix}
TP_{\overline{\mathcal{R}(A)}}+ SP_{\overline{\mathcal{R}(A)}} &O\\
O &O
\end{pmatrix},$$
and
\begin{align*}
\|\mathbb{X}\|_{\mathbb{A}}^2=\|\mathbb{X}\mathbb{X}^{\sharp_\mathbb{A}}\|_{\mathbb{A}}
&=\left\|\begin{pmatrix}
2P_{\overline{\mathcal{R}(A)}}&O\\
O &O
\end{pmatrix}\right\|_{\mathbb{A}}\\
 &=2\|P_{\overline{\mathcal{R}(A)}}\|_A=2.
\end{align*}
So, by using the fact that $AP_{\overline{\mathcal{R}(A)}}=A$ we see that
\begin{align*}\label{inff}
\omega_A(T+S)
&=\omega_A(TP_{\overline{\mathcal{R}(A)}}+ SP_{\overline{\mathcal{R}(A)}})\\
&=\omega_\mathbb{A}\left[\begin{pmatrix}
TP_{\overline{\mathcal{R}(A)}}+ SP_{\overline{\mathcal{R}(A)}} &O\\
O &O
\end{pmatrix}\right]\\
& = \omega_\mathbb{A}(\mathbb{X}\mathbb{Y}\mathbb{X}^{\sharp_\mathbb{A}})\quad(\text{by Lemma } \ref{lemma1})\nonumber\\
 &\leq \|\mathbb{X}\|_{\mathbb{A}}^2\omega_\mathbb{A}(\mathbb{Y})\quad(\text{by \cite[Lemma 4.4.]{zamani1}})\nonumber\\
  &= 2\omega_\mathbb{A}(\mathbb{Y}).
\end{align*}
This gives
\begin{equation}\label{c2}
\omega_A(T+S)\leq2\omega_\mathbb{A}\left[\begin{pmatrix}
O &T\\
S &O
\end{pmatrix}\right].
\end{equation}
By replacing $S$ by $-S$ in \eqref{c2} and then using the fact that $\omega_\mathbb{A}\left[\begin{pmatrix}
O &T\\
S &O
\end{pmatrix}\right]=\omega_\mathbb{A}\left[\begin{pmatrix}
O &T\\
-S &O
\end{pmatrix}\right]$ we get
\begin{equation}\label{5arita}
 \omega_A(T\pm S)\leq 2\,\omega_{\mathbb{A}}\left[\begin{pmatrix}
O&T\\
S &O
\end{pmatrix}\right].
\end{equation}
Finally, by replacing $S$ by $iS$ in \eqref{5arita}, we reach the desired results.
\end{proof}
Now we are ready to prove Theorem \ref{f12}.

\begin{proof}[Proof of Theorem \ref{f12}]
Clearly $T$ can written as $T=\Re_A(T)+i\Im_A(T)$ where
$$\Re_A(T):=\frac{T+T^{\sharp_A}}{2}\;\;\text{ and }\;\;\Im_A(T):=\frac{T-T^{\sharp_A}}{2i}.$$
So, $T^{\sharp_A}=[\Re_A(T)]^{\sharp_A}-i[\Im_A(T)]^{\sharp_A}$. Moreover, a short calculation reveals that $(T^{\sharp_A})^{\sharp_A}=[\Re_A(T)]^{\sharp_A}+i[\Im_A(T)]^{\sharp_A}$. So, by applying Theorem \ref{app1} we see that
\begin{align*}
&\omega_\mathbb{A}\left[\begin{pmatrix}
	\Re_A(T) & O\\
	\Im_A(T) & O
	\end{pmatrix}\right]\\
&=\omega_\mathbb{A}\left[\begin{pmatrix}
	[\Re_A(T)]^{\sharp_A} & [\Im_A(T)]^{\sharp_A}\\
	O & O
	\end{pmatrix}\right]\\
&\geq \frac{1}{2}\max\left\{\omega_A\Big([\Re_A(T)]^{\sharp_A}+ i[\Im_A(T)]^{\sharp_A}\Big),\omega_A\Big([\Re_A(T)]^{\sharp_A}- i[\Im_A(T)]^{\sharp_A}\Big)\right\}\\
&=\frac{1}{2}\max\left\{\omega_A(T^{\sharp_A}),\omega_A\left((T^{\sharp_A})^{\sharp_A}\right)\right\}\\
&=\frac{1}{2}\omega_A(T).
\end{align*}
On the other hand, by applying Lemma \ref{lemf} we get
\begin{align*}
&\omega_\mathbb{A}\left[\begin{pmatrix}
	O & -i\Im_A(T)\\
	\Re_A(T) & O
	\end{pmatrix}\right]\\
&=\omega_\mathbb{A}\left[\begin{pmatrix}
	O & [\Re_A(T)]^{\sharp_A}\\
	i[\Im_A(T)]^{\sharp_A} & O
	\end{pmatrix}\right]\\
&\geq \frac{1}{2}\max\left\{\omega_A\Big([\Re_A(T)]^{\sharp_A}+ i[\Im_A(T)]^{\sharp_A}\Big),\omega_A\Big([\Re_A(T)]^{\sharp_A}- i[\Im_A(T)]^{\sharp_A}\Big)\right\}\\
&=\frac{1}{2}\omega_A(T).
\end{align*}
\end{proof}

Based on Lemma \ref{lem100}, the forthcoming theorem which provides a lower bound for $\mathbb{A}$-numerical radius of a $2 \times 2$ operator matrix is also an application of the inequality \eqref{SK1}.
\begin{theorem}
	Let $P, Q,R,S\in\mathcal{B}_A(\mathcal{H})$. Then
\begin{equation}\label{sahoo2}
\omega_\mathbb{A}\left[\begin{pmatrix}
	P & Q\\
	R & S
	\end{pmatrix}\right]\geq \frac{1}{2}\max\{\mu,\nu\},
\end{equation}
where $$\mu=\max\{\omega_A(Q+R+S+P) ,\omega_A(Q+R-S-P) \},$$ and $$\nu=\max\big\{\omega_A\big(Q-R+i(S+P)\big) ,\omega_A\big(Q-R-i(S+P)\big)\big\}.$$
\end{theorem}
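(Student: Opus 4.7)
The plan is to mimic the strategy of Theorem \ref{app1}: apply the inequality \eqref{SK1} of Lemma \ref{n1} with $S=\mathbb{T}^{\sharp_\mathbb{A}}$ and $T=\mathbb{U}^{\sharp_\mathbb{A}}$ for a carefully chosen $\mathbb{A}$-unitary $\mathbb{U}$, using that an $\mathbb{A}$-unitary automatically satisfies $\|\mathbb{U}\|_\mathbb{A}=1$ and combined with the two invariances $\omega_\mathbb{A}(X^{\sharp_\mathbb{A}})=\omega_\mathbb{A}(X)$ and $\omega_\mathbb{A}(\alpha X)=\omega_\mathbb{A}(X)$ for $|\alpha|=1$. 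Two different choices of $\mathbb{U}$ will force the anti-commutator $\mathbb{U}^{\sharp_\mathbb{A}}\mathbb{T}^{\sharp_\mathbb{A}}+\mathbb{T}^{\sharp_\mathbb{A}}(\mathbb{U}^{\sharp_\mathbb{A}})^{\sharp_\mathbb{A}}$ to land in the block pattern $\begin{pmatrix} X & Y \\ Y & X \end{pmatrix}$ or $\begin{pmatrix} X & -Y \\ Y & X \end{pmatrix}$, at which point Lemma \ref{lem100}(i) respectively (ii) delivers the $\mu$ respectively $\nu$ half of the desired lower bound.

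For the $\mu$ half I would take $\mathbb{U}_1=\begin{pmatrix} O & I \\ I & O \end{pmatrix}$; the identity $AP_{\overline{\mathcal{R}(A)}}=A$ makes $\mathbb{A}$-unitarity immediate. A direct block multiplication, using Lemma \ref{lemma1}(iii) and the absorption identity $X^{\sharp_A}P_{\overline{\mathcal{R}(A)}}=X^{\sharp_A}$, then produces
\begin{equation*}
\mathbb{U}_1^{\sharp_\mathbb{A}}\mathbb{T}^{\sharp_\mathbb{A}}+\mathbb{T}^{\sharp_\mathbb{A}}(\mathbb{U}_1^{\sharp_\mathbb{A}})^{\sharp_\mathbb{A}}=\begin{pmatrix} Q+R & P+S \\ P+S & Q+R \end{pmatrix}^{\sharp_{\mathbb{A}}}.
\end{equation*}
Feeding this into \eqref{SK1} bounds its $\omega_\mathbb{A}$ by $2\|\mathbb{U}_1^{\sharp_\mathbb{A}}\|_\mathbb{A}\,\omega_\mathbb{A}(\mathbb{T}^{\sharp_\mathbb{A}})=2\omega_\mathbb{A}(\mathbb{T})$, while Lemma \ref{lem100}(i) identifies the $\omega_\mathbb{A}$ of the matrix on the right with $\mu$. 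This yields $\omega_\mathbb{A}(\mathbb{T})\geq \mu/2$.

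The $\nu$ half is handled in exactly the same manner, but with $\mathbb{U}_2=\begin{pmatrix} O & -iI \\ iI & O \end{pmatrix}$, which is again $\mathbb{A}$-unitary. The analogous block calculation gives
\begin{equation*}
\mathbb{U}_2^{\sharp_\mathbb{A}}\mathbb{T}^{\sharp_\mathbb{A}}+\mathbb{T}^{\sharp_\mathbb{A}}(\mathbb{U}_2^{\sharp_\mathbb{A}})^{\sharp_\mathbb{A}}=-i\begin{pmatrix} Q-R & -(S+P) \\ S+P & Q-R \end{pmatrix}^{\sharp_{\mathbb{A}}},
\end{equation*}
and the overall scalar factor $-i$ is invisible to $\omega_\mathbb{A}$. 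Combining \eqref{SK1} with Lemma \ref{lem100}(ii) applied to the block pattern $\begin{pmatrix} Q-R & -(S+P) \\ S+P & Q-R \end{pmatrix}$ produces $\omega_\mathbb{A}(\mathbb{T})\geq \nu/2$, and taking the larger of the two lower bounds finishes the proof.

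The only genuine obstacle is guessing the right $\mathbb{A}$-unitaries $\mathbb{U}_1$ and $\mathbb{U}_2$ so that the anti-commutator in \eqref{SK1} collapses to precisely the symmetric respectively antisymmetric block form required by Lemma \ref{lem100}. Once that choice is made, the remainder is routine bookkeeping with Lemma \ref{lemma1}(iii) together with the projection identities $AP_{\overline{\mathcal{R}(A)}}=A$, $P_{\overline{\mathcal{R}(A)}}X^{\sharp_A}=X^{\sharp_A}=X^{\sharp_A}P_{\overline{\mathcal{R}(A)}}$, and $(\alpha I)^{\sharp_A}=\overline{\alpha}P_{\overline{\mathcal{R}(A)}}$, which absorb all the stray projection factors produced in the expansion.
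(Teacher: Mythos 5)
Your proposal is correct and follows essentially the same route as the paper: apply the inequality \eqref{SK1} with $T=\mathbb{U}^{\sharp_\mathbb{A}}$, $S=\mathbb{T}^{\sharp_\mathbb{A}}$ for suitable $\mathbb{A}$-unitaries and then invoke Lemma \ref{lem100}(i)--(ii), and your first unitary coincides with the paper's choice. The only difference is cosmetic: for the $\nu$ half the paper takes $\begin{pmatrix} O & I\\ -I & O\end{pmatrix}$ while you take $\begin{pmatrix} O & -iI\\ iI & O\end{pmatrix}$, which merely introduces the harmless unimodular factor $-i$ that $\omega_{\mathbb{A}}$ ignores.
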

\begin{proof}
	Let $\mathbb{U}=\begin{pmatrix}
	O & I\\I & O
	\end{pmatrix}$. 	
	By using Lemma \ref{lemma1} (iii), we see that
	$$\mathbb{U}^{\sharp_\mathbb{A}}=\begin{pmatrix}
	P_{\overline{\mathcal{R}(A)}} &O\\
	O &P_{\overline{\mathcal{R}(A)}}
	\end{pmatrix}\mathbb{U}=\begin{pmatrix}
	O &P_{\overline{\mathcal{R}(A)}}\\
	P_{\overline{\mathcal{R}(A)}} &O
	\end{pmatrix}.$$
	So, by using the fact that $AP_{\overline{\mathcal{R}(A)}}=A$, we can verify that $\|\mathbb{U}x\|_{\mathbb{A}}=\|\mathbb{U}^{\sharp_{\mathbb{A}}}x\|_{\mathbb{A}}=\|x\|_{\mathbb{A}}$ for all $x\in \mathcal{H}\oplus \mathcal{H}$. Hence $\mathbb{U}$ is $\mathbb{A}$-unitary. Moreover, clearly $(\mathbb{U}^{\sharp_{\mathbb{A}}})^{\sharp_{\mathbb{A}}}=\mathbb{U}^{\sharp_{\mathbb{A}}}$. Now, let $\mathbb{T}=\begin{pmatrix}
	P & Q\\
	R & S
	\end{pmatrix}$. Since, $P_{\overline{\mathcal{R}(A)}}X^{\sharp_A}=X^{\sharp_A}P_{\overline{\mathcal{R}(A)}}=X^{\sharp_A}$ for all $X\in\mathcal{B}_A(\mathcal{H})$ (see \cite{faiot}), then a short calculation shows that
$$\mathbb{U}^{\sharp_{\mathbb{A}}}\mathbb{T}^{\sharp_{\mathbb{A}}}+\mathbb{T}^{\sharp_{\mathbb{A}}}(\mathbb{U}^{\sharp_{\mathbb{A}}})^{\sharp_{\mathbb{A}}}=\begin{pmatrix}
	Q^{\sharp_A}+R^{\sharp_A} & S^{\sharp_A}+P^{\sharp_A}\\
	S^{\sharp_A}+P^{\sharp_A} & Q^{\sharp_A}+R^{\sharp_A}
	\end{pmatrix} =\begin{pmatrix}
	Q+R & S+P\\
	S+P & Q+R
	\end{pmatrix}^{\sharp_{\mathbb{A}}}.$$
By using Lemma \ref{n1}, we see that
$$\omega_{\mathbb{A}}(\mathbb{U}^{\sharp_{\mathbb{A}}}\mathbb{T}^{\sharp_{\mathbb{A}}}+\mathbb{T}^{\sharp_{\mathbb{A}}}(\mathbb{U}^{\sharp_{\mathbb{A}}})^{\sharp_{\mathbb{A}}})
\leq 2\omega_{\mathbb{A}}(\mathbb{T}^{\sharp_{\mathbb{A}}}),$$
which, in turn, implies that
\begin{align*}
\omega_{\mathbb{A}}\left[\begin{pmatrix}
	P & Q\\
	R & S
	\end{pmatrix} \right]
&\geq \frac{1}{2}\omega_{\mathbb{A}}\left[\begin{pmatrix}
	Q+R & S+P\\
	S+P & Q+R
	\end{pmatrix} \right] \\
 &=\frac{1}{2}\max\{\omega_A(Q+R+S+P) ,\omega_A(Q+R-S-P) \}:=\frac{1}{2}\mu,
\end{align*}
where the last equality follows by applying Lemma \ref{lem100} (i). On the other hand, by choosing $\mathbb{U}=\begin{pmatrix}
	O & I\\-I & O
	\end{pmatrix}$ and proceeding as above we obtain
\begin{align*}
\omega_{\mathbb{A}}\left[\begin{pmatrix}
	P & Q\\
	R & S
	\end{pmatrix} \right]
&\geq \frac{1}{2}\omega_{\mathbb{A}}\left[\begin{pmatrix}
	Q-R & -(S+P)\\
	S+P & Q-R
	\end{pmatrix} \right] \\
 &=\frac{1}{2}\max\big\{\omega_A\big(Q-R+i(S+P)\big) ,\omega_A\big(Q-R-i(S+P)\big)\big\}:=\frac{1}{2}\nu,
\end{align*}
where the last equality follows by applying Lemma \ref{lem100} (ii). This completes the proof of the theorem.
\end{proof}


\begin{thebibliography}{10} 
\footnotesize
\bibitem{dolat} M. Al-Dolat, I. Jaradat, B. Al-Husban,   A novel numerical radius upper bounds for $2\times 2$ operator matrices, Linear Multilinear Algebra (2020) \url{ https://doi.org/10.1080/03081087.2020.1756199}


\bibitem{acg1}{M. L. Arias, G. Corach, M.C. Gonzalez,} {Partial isometries in semi-Hilbertian spaces,} Linear Algebra Appl. 428 (7) (2008), 1460--1475.

\bibitem{acg2} {M. L. Arias, G. Corach, M.C. Gonzalez,} {Metric properties of projections in semi-Hilbertian spaces,} Integral Equations and Operator Theory, 62 (2008), 11--28.

\bibitem{acg3} {M. L. Arias, G. Corach, M.C. Gonzalez,} {Lifting properties in operator ranges,} Acta Sci. Math. (Szeged) 75:3-4 (2009), 635--653.

\bibitem{bakna} H. Baklouti, S. Namouri, Closed operators in semi-Hilbertian spaces. Linear and Multilinear Algebra (2021) \url{https://doi.org/10.1080/03081087.2021.1932709}.

\bibitem{bakfeki01}{H. Baklouti, K.Feki, O.A.M. Sid Ahmed,} {Joint numerical ranges of operators in semi-Hilbertian spaces,}  Linear Algebra Appl. 555 (2018), 266--284.

\bibitem{bakfeki04}{H. Baklouti, K.Feki, O.A.M. Sid Ahmed,} {Joint normality of operators in semi-Hilbertian spaces,} Linear Multilinear Algebra 68(4) (2020), 845--866.

\bibitem{bm} A. Bourhim, M. Mabrouk, $a$-Numerical range on $C^*$-algebras. Positivity (2021). \url{https://doi.org/10.1007/s11117-021-00825-6}

\bibitem{BPl} P. Bhunia and K. Paul, {Some improvements of numerical radius inequalities of operators and operator matrices,} Linear and Multilinear Algebra (2020). \url{https://doi.org/10.1080/03081087.2020.1781037}

\bibitem{bfeki}{P. Bhunia, K.Feki, K. Paul,} {$A$-Numerical radius orthogonality and parallelism of semi-Hilbertian space operators and their applications,} Bull. Iran. Math. Soc. (2020). \url{https://doi.org/10.1007/s41980-020-00392-8}

\bibitem{BPN1} P. Bhunia, R.K. Nayak, K. Paul,  Refinements of $A$-numerical radius inequalities and their applications, Adv Oper Theory. 5 2020, 1498–1511.

\bibitem{BPN} P. Bhunia, K. Paul, R.K. Nayak, On inequalities for $A$-numerical radius of operators, Electron. J. Linear Algebra, 36 (2020), 143--157.

\bibitem{BNP1} P. Bhunia, R.K. Nayak and K. Paul, {Improvement of $A$-numerical radius inequalities of semi-Hilbertian space operators}, Results Math 76, 120 (2021). https://doi.org/10.1007/s00025-021-01439-w




\bibitem{ConFe} C. Conde and K. Feki, {On some inequalities for the generalized joint numerical radius of semi-Hilbert space operators}, Ricerche mat (2021). \url{https://doi.org/10.1007/s11587-021-00629-6}


\bibitem{doug}{R. G. Douglas,} {On majorization, factorization and range inclusion of operators in Hilbert space,} Proc. Amer. Math. Soc. 17 (1966), 413--416.


\bibitem{feki01} {K. Feki}, {Spectral radius of semi-Hilbertian space operators and its applications}, Ann. Funct. Anal. 11, 929-946 (2020). \url{https://doi.org/10.1007/s43034-020-00064-y}

\bibitem{feki02}{K. Feki,} {Some $A$-spectral radius inequalities for $A$-bounded Hilbert space operators,} arXiv:2002.02905v1 [math.FA] 7 Feb 2020.

\bibitem{feki03} {K. Feki}, {A note on the $A$-numerical radius of operators in semi-Hilbert spaces}, Arch. Math. 115, 535-544 (2020). \url{https://doi.org/10.1007/s00013-020-01482-z}

\bibitem{fekilaa} {K. Feki}, {On tuples of commuting operators in positive semidefinite inner product spaces},  Linear Algebra Appl. 603 (2020) 313-328.

\bibitem{feki04} {K. Feki}, Some $\mathbb{A}$-numerical radius inequalities for $d\times d$ operator matrices, 	Rend. Circ. Mat. Palermo, II. Ser (2021). \url{https://doi.org/10.1007/s12215-021-00623-9}

\bibitem{feki004} {K. Feki}, Some numerical radius inequalities for semi-Hilbert space operators, J. Korean Math. Soc. (2021).
{https://doi.org/10.4134/JKMS.j210017}

\bibitem{faiot}{K. Feki,} {Generalized numerical radius inequalities of operators in Hilbert spaces,}  Adv. Oper. Theory (2020), \url{https://doi.org/ 10.1007/s43036-020-00099-x}.

\bibitem{fg}{M. Faghih-Ahmadi, F. Gorjizadeh,} {$A$-numerical radius of $A$-normal operators in semi-Hilbertian spaces,} Italian journal of pure and applied mathematics, 36 (2016), 73--78.


\bibitem{HORN} { R. A. Horn,  and C. R. Johnson}, {Topics in Matrix Analysis}, Cambridge University Press, Cambridge, 1991.

\bibitem{HirKit} O. Hirzallah, F.Kittaneh, K. Shebrawi, {Numerical radius inequalities for  $2\times 2$ operator matrices}, Studia Mathematica \textbf{210} (2012), 99--115.

\bibitem{HirKit2} O. Hirzallah, F. Kittaneh, K. Shebrawi, {Numerical Radius Inequalities for Commutators of Hilbert Space Operators}, Numerical Functional Analysis and Optimization, 32(7) (2011), 739--749.







\bibitem{KITSAT}F. Kittaneh, S. Sahoo, On A-numerical radius equalities and inequalities for certain operator matrices. Ann. Funct. Anal. \textbf{12} (52) (2021). \url{https://doi.org/10.1007/s43034-021-00137-6}

\bibitem{NSD} N. C. Rout, S. Sahoo, D. Mishra, {Some $A$-numerical radius inequalities for semi-Hilbertian space operators}, Linear  Multilinear Algebra  (2020). \url{https://doi.org/10.1080/03081087.2020.1774487}

\bibitem{rout} {N. C. Rout, S. Sahoo, D. Mishra}, On $\mathbb{A}$-numerical radius inequalities for $2\times 2$ operator matrices, Linear  Multilinear Algebra  (2020)	\url{https://doi.org/10.1080/03081087.2020.1810201.}


\bibitem{saddi}{A. Saddi,} {$A$-Normal operators in Semi-Hilbertian spaces,} The Australian Journal of Mathematical Analysis and Applications. 9 (1), (2012), 1--12.



\bibitem{zamani1}{A. Zamani,} {$A$-numerical radius inequalities for semi-Hilbertian space operators,} Linear Algebra Appl. 578 (2019), 159--183.


 \end{thebibliography}
\end{document}